\newtheorem{theorem}{Theorem}[section]
\newtheorem{lemma}[theorem]{Lemma}
\newtheorem{proposition}[theorem]{Proposition}
\newtheorem{corollary}[theorem]{Corollary}
\theoremstyle{definition}
\newtheorem{definition}[theorem]{Definition}
\newtheorem{remark}[theorem]{Remark}
\newtheorem{example}[theorem]{Example}
 \newtheorem{theorem*}{Theorem}
 \newtheorem{corollary*}[theorem*]{Corollary}
 \newtheorem{proposition*}[theorem*]{Proposition}
\newcommand{\CC}{{\mathbb{C}}}
\newcommand{\QQ}{{\mathbb{Q}}}
\newcommand{\RR}{{\mathbb{R}}}
\newcommand{\ZZ}{{\mathbb{Z}}}
\newcommand{\set}[2]{\ensuremath{\left \{\,{#1}\,\middle|\, {#2}\,\right\}}}
\newcommand{\smvee}{\raise0.9ex\hbox{$\scriptscriptstyle\vee$}}
\newcommand{\Ss}[1]{\mathcal{O}_{#1}}
\newcommand{\Rs}{\tilde{X}}
\newcommand{\Sf}[1]{\mathcal{#1}}
\newcommand{\dimc}[1]{\dim_{\mathbb{C}}\left(#1\right)}
\DeclareMathOperator{\Exts}{\mathscr{E}\text{\kern -3pt {\calligra\large xt}}\,}
\DeclareMathOperator{\Homs}{\mathscr{H}\text{\kern -3pt {\calligra\large om}}\,}
\DeclareMathOperator{\Hom}{\text{Hom}\,}
\DeclareMathOperator{\Ext}{\text{Ext}\,}
\DeclareMathOperator{\sign}{sign}
\DeclareMathOperator{\rank}{rank}
\DeclareMathOperator{\trace}{Tr}
\DeclareMathOperator{\Ind}{Ind}
\DeclareMathOperator{\ind}{ind}
\newcommand{\GLn}[2][\CC]{\mathrm{GL}(#2,#1)}
\newcommand{\SLn}[2][\CC]{\mathrm{SL}(#2,#1)}
\newcommand{\SU}{\mathrm{SU(2)}}
\newcommand{\BG}[1][\Gamma]{\mathrm{B}#1}
\newcommand{\EG}[1][\Gamma]{\mathrm{E}#1}
\newcommand{\ccs}{Cheeger-Cheern-Simons}
\newcommand{\abs}[1]{\lvert #1\rvert}	
\begin{document}

\title[]{Cheeger-Chern-Simons classes of representations of finite subgroups of $\SLn{2}$ and the spectrum of rational double point singularities}

\author[J.~A.~Arciniega-Nevárez]{José Antonio Arciniega-Nevárez}
\address{División de Ingenierías, Campus Guanajuato, Universidad de Guanajuato\\
Av. Juárez No. 77, Zona Centro, Guanajuato, Gto., México, C.P. 36000\\ +524731020100-5924}
\email{ja.arciniega@ugto.mx}

\author[J.~L.~Cisneros-Molina]{Jos\'e Luis Cisneros-Molina}
\address{Instituto de Matem\'aticas, Unidad Cuernavaca\\ Universidad Nacional Aut\'onoma de M\'exico\\ Avenida Universidad s/n, Colonia Lomas
de Chamilpa\\ Cuernavaca, Morelos, Mexico.}
\email{jlcisneros@im.unam.mx}

\author[A.~Romano-Velázquez]{Agust\'in Romano-Vel\'azquez}
\address{Alfréd Rényi Institute of Mathematics, Hungarian Academy of Sciences, Reáltanoda Utca 13-15, H-1053, Budapest, Hungary.\newline Instituto de Matem\'aticas, Unidad Cuernavaca\\ Universidad Nacional Aut\'onoma de M\'exico\\ Avenida Universidad s/n, Colonia Lomas
de Chamilpa\\ Cuernavaca, Morelos, Mexico.}
\email{agustin@renyi.hu, agustin.romano@im.unam.mx}

\subjclass[2010]{Primary: 13C14, 13H10, 14E16, 32S25, 32S05}

\begin{abstract}
Let $L$ be a compact oriented $3$-manifold and $\rho\colon\pi_1(L)\to\GLn{n}$ a representation.
Evaluating the Cheeger-Chern-Simons class $\widehat{c}_{\rho,k}\in H^{2k-1}(L;\CC/\ZZ)$ of $\rho$ in homology classes $\nu\in H_{2k-1}(L;\ZZ)$
we get characteristic numbers that we call the $k$-th CCS-numbers of $\rho$. In Theorem~\ref{th:C2.APS} we prove that if $\rho$ is a topologically trivial representation, the second
CCS-number $\widehat{c}_{\rho,2}([L])$, where $[L]$ is the fundamental class of $L$, is given by the invariant $\tilde{\xi}_\rho(D)$ of the Dirac operator $D$ of $L$ twisted by $\rho$,
defined by Atiyah, Patodi and Singer \cite{Atiyah-Patodi-Singer:SARGIII}. If $L$ is a rational homology sphere we also give a formula for $\widehat{c}_{\rho,2}([L])$ for any representation $\rho$ in terms of $\tilde{\xi}$-invariants
of $D$.
Given a topologically trivial representation $\rho\colon\pi_1(L)\to\GLn{n}$ we construct an element $\langle L,\rho\rangle$ in $K_3(\CC)$, the $3$rd algebraic K-theory group of the complex numbers.
When $L$ is an integral homology sphere we prove that $\langle L,\rho\rangle$ coincides with the element in $K_3(\CC)$ constructed by Jones and Westbury in \cite{Jones-Westbury:AKTHSEI} and that Theorem~\ref{th:C2.APS} generalizes \cite[Theorem~A]{Jones-Westbury:AKTHSEI}.
For rational homology spheres of the form $L=\mathbb{S}^3/\Gamma$, where $\Gamma$ is a finite subgroup of $\SU$, we compute the first and second
CCS-numbers of all the irreducible representations of $\Gamma$, with respect to the generators of $H_{2k-1}(L;\ZZ)$ with $i=1,2$. Using these CCS-numbers, we recover the spectrum of all rational
double point singularities, which is an invariant of hypersurface singularities defined by Steenbrink in \cite{Steenbrink:SHS}.
Motivated by this result, we define the \textit{topological spectrum} for rational surface singularities and Gorenstein singularities.
Given a normal surface singularity $(X,x)$ with link $L$ a rational homology sphere, we show how to compute the invariant $\tilde{\xi}_\rho(D)$ for the Dirac operator of $L$ using a resolution or a smoothing of $(X,x)$.
\end{abstract}
\maketitle

\section{Introduction}

Cheeger-Chern-Simons classes $\widehat{c}_k(E,\nabla)\in H^{2k-1}(M;\CC/\ZZ)$ are secondary charac\-te\-ris\-tic classes of a vector bundle $E$ over a smooth manifold $M$ with a \textit{flat} connection $\nabla$, they were defined by Cheeger and Simons in \cite{zbMATH04007261}. Given a representation $\rho\colon M\to \GLn{n}$ of the fundamental group of $M$,
there is an associated flat vector bundle $V_\rho$ with a canonical flat connection $\nabla_\rho$, following \cite[\S8]{zbMATH04007261} the Cheeger-Chern-Simons classes of the representation $\rho$ are defined by $\widehat{c}_{\rho,k}=\widehat{c}_{k}(V_\rho,\nabla_\rho)\in H^{2k-1}(M;\CC/\ZZ)$. Evaluating $\widehat{c}_{\rho,k}$ on a homology class $\nu\in H_{2k-1}(M;\ZZ)$ one gets characteristic numbers $\widehat{c}_{\rho,k}(\nu)$ which we call CCS-numbers.

In the present article we restrict to the case of a compact oriented $3$-manifold $L$.
Using the Index Theorem for flat bundles by Atiyah, Patodi and Singer \cite[(5.3)]{Atiyah-Patodi-Singer:SARGIII}, in Theorem~\ref{th:C2.APS} we prove that for topologically trivial representations
$\rho\colon\pi_1(L)\to\GLn{n}$ the second CCS-number $\widehat{c}_{\rho,2}([L])$, where $[L]\in H_3(L;\ZZ)$ is the fundamental class of $L$, agrees with the invariant
$\tilde{\xi}_\rho(D)$ of the Dirac operator $D$ of $L$ twisted by the representation $\rho$, defined also in \cite[(3.2)]{Atiyah-Patodi-Singer:SARGIII}.
For the case when $L$ is a rational homology $3$-sphere, we also give a formula for the second CCS-number $\widehat{c}_{\rho,2}([L])$ of any representation $\rho$.

A representation $\rho\colon\pi_1(L) \to \GLn{n}$ is topologically trivial if and only if it factors through $\SLn{n}$.
Hence, it induces a map $f_n\colon L\to\BG[\SLn{n}]^d$ between $L$ and the classifiying space of $\SLn{n}$ with discrete topology,
and a homomorphism in homology $(f_n)_*\colon H_3(L;\ZZ)\to H_3(\BG[\SLn{n}]^d;\ZZ)$.
The image of the fundamental class of $L$ gives an element $\langle L,\rho\rangle=f_*([L])\in H_3(\BG[\SLn{n}]^d;\ZZ)$.
The second Cheeger-Chern-Simons class $\widehat{c}_{\rho,2}$ of $\rho$ is the pull-back by $f_n$ of a class $\widehat{\widehat{c}}_2\in H^3(\BG[\SLn{n}]^d;\CC/\ZZ)$
(the restriction to $\BG[\SLn{n}]^d$ of the $k$-th universal Cheeger-Chern-Simons class $\widehat{c}_2\in H^3(\BG[\GLn{n}]^d;\CC/\ZZ)$, see Remark~\ref{rem:uccs}),
thus, evaluating $\widehat{\widehat{c}}_2$ on $\langle L,\rho\rangle$ we recover the second CCS-number $\widehat{c}_{\rho,2}([L])=\tilde{\xi}_\rho(D)$.
We can identify the element $\langle L,\rho\rangle$ with an element in the $3$rd algebraic K-theory group of the complex numbers via the isomorphism
$K_3(\CC)=H_3(\SLn{n}^d;\ZZ)$ for $n\geq3$.
On the other hand, given an integral homology sphere $\Sigma$ and a representation $\rho\colon \pi_1(\Sigma)\to\GLn{n}$, in \cite{Jones-Westbury:AKTHSEI}
Jones and Westbury construct an element $[\Sigma,\rho]\in K_3(\CC)$ whose image under a regulator homomorphism $e\colon K_3(\CC)\to\CC/\ZZ$ gives the invariant $\tilde{\xi}_\rho(D)$.
We prove that for an integral homology sphere $\Sigma$ we have $\langle \Sigma,\rho\rangle=[\Sigma,\rho]\in K_3(\CC)$ and that Theorem~\ref{th:C2.APS} is a generalization
of \cite[Theorem~A]{Jones-Westbury:AKTHSEI}.

Next, we focus on the rational homology $3$-spheres of the form $L=\mathbb{S}^3/\Gamma$, where $\Gamma$ is a finite subgroup of $\SU$. We compute the first and second
CCS-numbers of all the irreducible representations of $\Gamma$, with respect to the generators of $H_1(L;\ZZ)$ and the fundamental class $[L]$ respectively.

Quoting Arnol'd in \cite{Arnold:SPST} ``Much progress in singularity theory of differentiable maps is based on empirical
data. Some of these empirical facts later become theorems.'' Our next result is an empirical fact about rational double point singularities, that we hope it will become a theorem.
A rational double point singularity is a quotient singularity of the form $(\CC^2/\Gamma,0)$, where $\Gamma$ is a finite subgroup of $\SLn{2}$. Its link is the rational homology sphere
$L=\mathbb{S}^3/\Gamma$ mentioned above.
In \cite{Steenbrink:SHS} Steenbrink defined an invariant for hypersurface singularities using the mixed Hodge structure on the cohomology of the Milnor fibre together with the monodromy. It is called the \textit{spectrum} and it is a set of rational numbers (not necessarily all different). In Theorem~\ref{th:C1C2SonSpf} we give a recipe to recover the spectrum of a rational double point singularity from the first CCS-numbers of the irreducible representations of $\Gamma$ and a new invariant $\Xi_{\alpha_{\mathrm{Nat}}}(X,\Rs)$ which is a multiple of the second CCS-number of the natural representation given by the coefficients of the fundamental cycle of the minimal resolution of the singularity. Motivated by Theorem~\ref{th:C1C2SonSpf} we define a new invariant, the \textit{topological spectrum}, for rational surface singularities and Gorenstein singularities,
using respectively, the Cheeger-Chern-Simons classes of \textit{special} \cite{zbMATH03997956} and \textit{cohomologically special} \cite{BoRo} representations of the fundametal group of its link.

Finally, given a normal surface singularity $(X,x)$ with link $L$ a rational homology sphere, using a result by Atiyah, Patodi and Singer \cite[pp.~415]{zbMATH03491931} we show how to compute $\tilde{\xi}_\rho(D)$ for the Dirac operator of $L$ using a resolution or a smoothing of $(X,x)$.

\section{Characteristic classes of representations}
\label{Sec:Pre}
In this section we recall basics on classifying spaces, we give the definition of the Chern classes of a representation given by Atiyah in \cite{Atiyah-PMIHES} and we define
the Chern and Cheeger-Chern-Simons classes of a representation of the fundamental group of a manifold. 
We assume basic familiarity with group homology, representation theory, fibre bundles and Chern-Weil theory, 
see~\cite{zbMATH03935317,Husemoller:Bundles,Chern:TDG} for more details.

\subsection{Principal (flat) bundles}
In this section we recall some basic results about principal bundles, see \cite[Chapter~4]{Husemoller:Bundles} for details. For any topological group $G$ there exist spaces (unique up to homotopy) $\EG[G]$ and $\BG[G]$; and a map (unique up to homotopy) $p\colon \EG[G] \to \BG[G]$.
The space $\EG[G]$ is contractible and the map $p$ is a $G$-principal bundle, that is, $\EG[G]$ admits a free $G$-action, so we can think of $\BG[G]$ as the orbit space $\EG[G]/G$ and $p$ as the projection. Therefore, by the long exact homotopy sequence, we have $\pi_{n+1}(\BG[G])\cong \pi_{n}(G)$.
The space $\BG[G]$ is called the \emph{classifying space of $G$} and the principal $G$-bundle $p$ the \emph{universal principal $G$-bundle} because any principal $G$-bundle over a paracompact space $X$ is the pull-back of $p$ by a \emph{classifying map} $X\to \BG[G]$.
The construction of $BG$ is functorial in $G$.

Thus, there is a one-to-one correspondence between the set $\text{Prin}_{\GLn{n}}(X)$ of isomorphism classes of principal $\GLn{n}$-bundles over $X$
and the set $[X, \BG[\GLn{n}]]$ of homotopy classes of maps from $X$ to $\BG[\GLn{n}]$:
\begin{equation*}
    \text{Prin}_{\GLn{n}}(X) \Longleftrightarrow [X, \BG[\GLn{n}]].
\end{equation*}
There is also a one-to-one correspondence between principal $\GLn{n}$-bundles and complex vector bundles of rank $n$ over $X$:
to any principal $\GLn{n}$-bundle corresponds the complex vector bundle given by the standard action of $\GLn{n}$ on $\CC^n$ and, given a complex vector bundle of rank $n$
its frame bundle is a principal $\GLn{n}$-bundle. So, if we denote by $\text{Vect}_{n,\CC}(X)$ 
the set of isomorphism classes of complex vector bundles of rank $n$ over $X$, then we can write the previous correspondence as
\begin{equation}\label{eq:Vect.class.map}
\text{Vect}_{n,\CC}(X) \Longleftrightarrow [X, \BG[\GLn{n}]].
\end{equation}

We denote by $G^d$ the same group $G$ but with the discrete topology. Note that $BG^d$ is an Eilenberg-MacLane space of type $K(G^d,1)$.
The natural continuous map $\iota \colon G^d \to G$ given by the identity, induces a natural map between the classifying spaces $\iota \colon \BG[G]^d \to \BG[G]$.
Let $M$ be a compact manifold. 
A principal $G$-bundle $G\to P\to M$ with classifying map $\bar{f}\colon M\to\BG[G]$ is called \emph{flat} if $f$ factors up to homotopy as
\begin{equation}\label{eq:flat.fact}
\xymatrix{
&\BG[G]^d \ar[d]^\iota \\
M \ar[r]^{\bar{f}} \ar@{.>}[ru]^{f} & \BG[G].
}
\end{equation}
Such a factorization has the effect of reducing
the structure group of the bundle to a discrete group, so that any associated vector bundle has a \emph{flat connection}, i.~e., with curvature zero, see \cite[\S4]{Oprea-Tanre:FCBPBCMD}.
So, if we denote by $\text{Flat Prin}_{\GLn{n}}(M)$ (respectively $\text{Flat Vect}_{n,\CC}(M)$)
the set of isomorphism classes of flat principal (flat rank $n$ complex vector) bundles over $X$, then
one has the following one-to-one correspondences
\begin{equation}\label{eq:Vect.Class}
\begin{gathered}
\text{Flat Prin}_{\GLn{n}}(M) \Longleftrightarrow [M, \BG[\GLn{n}^{d}]],\\
\text{Flat Vect}_{n,\CC}(M) \Longleftrightarrow [M, \BG[\GLn{n}^d]] \Longleftrightarrow \Hom(\pi_1(M),\GLn{n}^d).
\end{gathered}
\end{equation}

\begin{remark}\label{rem:universal} 
Let $\mathrm{V}_n(\CC^K)$ be the \textit{Stiefel manifold} of $n$-frames in $\CC^K$ and $\mathrm{G}_n(\CC^K)$ the \textit{Grassmannian} of $n$-planes in $\CC^K$.
Set $\mathrm{V}_n=\mathrm{V}_n(\CC^\infty)=\varinjlim \mathrm{V}_n(\CC^K)$ and $\mathrm{G}_n=\mathrm{G}_n(\CC^\infty)=\varinjlim \mathrm{G}_n(\CC^K)$.
For $G=\GLn{n}$, the classifying space $\BG[G]$ is $\mathrm{G}_n$ and the universal principal $G$-bundle is the \textit{Stiefel bundle} $\mathrm{V}_n\to\mathrm{G}_n$.
The \emph{universal vector bundle} is the canonical complex bundle $\gamma^n$ of rank $n$ over $\mathrm{G}_n$ \cite[Theorem~8-6.1, Theorem~3-7.2]{Husemoller:Bundles}.
\end{remark}

\begin{remark}\label{rem:universal.connection}
In \cite[Theorem~1]{zbMATH03186497} Narasimhan and Ramanan proved that the \textit{Stiefel bundle} $\mathrm{V}_n(\CC^K)\to \mathrm{G}_n(\CC^K)$ has a connection $\vartheta$ which makes it 
an \textit{$m$-classifying} principal $\GLn{n}$-bundle when $K=(m+1)(2m+1)n^2$, that is, any
principal $G$-bundle over a manifold $M$ with a connection $\theta$ with $\dim M\leq m$, admits a connection preserving bundle map to $\mathrm{V}_n(\CC^K)$, and for any two such 
connection preserving bundle morphisms, the corresponding maps $f_1,f_2\colon M\to \mathrm{G}_n(\CC^K)$ are smoothly homotopic. The connections on the Stiefel bundles with different $K$ are
compatible, so we have a \textit{universal connection} $\vartheta$ in the universal bundle $\mathrm{V}_n\to\mathrm{G}_n$. This defines a universal connection $\nabla_\mathrm{univ}$ on the
universal vector bundle $\gamma^n\to \mathrm{G}_n$. We denote it by $(\gamma^n,\BG[\GLn{n}],\nabla_\mathrm{univ})$.
Hence the pull-back (with connection) of $\gamma^n$ by the map $\iota \colon \BG[\GLn{n}]^d \to \BG[\GLn{n}]=\mathrm{G}_n$ is a \emph{universal flat vector bundle} with \textit{universal connection}. We denote it by $(\gamma^n_d,\BG[\GLn{n}]^d,\nabla_\mathrm{univ}^d)=(\iota^*\gamma^n,\BG[\GLn{n}]^d,\iota^*\nabla_\mathrm{univ})$.
\end{remark}

\subsection{The Chern class of a representation}\label{ssec:cc.rho}
Let $\Gamma$ be a discrete group.
Let $\rho \colon \Gamma \to \GLn{n}$
be a representation. Such a representation induces a map between the classifying spaces
\begin{equation*}
\BG[\rho] \colon \BG[\Gamma] \to \BG[\GLn{n}],
\end{equation*}
which by \eqref{eq:Vect.class.map} induces a complex vector bundle $V_\rho=\EG[\Gamma]\times_\rho\CC^n\to \BG[\Gamma]$, where $\EG[\Gamma]\times_\rho\CC^n$ is the quotient of $\EG[\Gamma]\times\CC^n$ by the right action of $\Gamma$ given by $(e,v)g=(eg,\rho(g)^{-1}v)$.

The \emph{$k$-th Chern class of $\rho$} is the $k$-th Chern class of the vector bundle $V_\rho$, that is,
\begin{equation*}
 c_{k}(\rho)=c_k(V_\rho) \in H^{2k}(\BG[\Gamma];\ZZ),
\end{equation*}
(for $\Gamma$ finite see~\cite{Atiyah-PMIHES}, for the general case see~\cite{zbMATH03868233}).

For $\Gamma$ a finite group, the Chern class of a representation $\rho$ is characterized by the following three properties (see \cite{Kroll-AACOCCOFGR}\cite[Theorem~5]{Kroll:CCFGR}):
\begin{itemize}
    \item If $h\colon \Gamma' \to \Gamma$ is a group morphism, then
    \begin{equation*}
        c_{k}(h^{!}\rho) = h^{*}c_{k}(\rho),
    \end{equation*}
where $h^!$ and $h^*$ are the natural pull-back maps in representation theory and cohomology.
    \item If the total Chern class is $c(\rho)=1+c_{1}(\rho)+\dots+c_{n}(\rho)$, then
    \begin{equation*}
        c(\rho_1 \oplus \rho_2) = c(\rho_1) \cdot c(\rho_2).
    \end{equation*}
    \item The homomorphism
    \begin{equation*}
        c_1 \colon \Hom(\Gamma, U(1)) \to H^2(\BG[\Gamma];\ZZ),\qquad \rho\mapsto c_1(\rho),
    \end{equation*}
    is an isomorphism.
\end{itemize}
\begin{remark}
\label{rem:C1Det}
For any representation $\rho \colon \Gamma \to \GLn{n}$, the first Chern class satisfies
\begin{equation*}
    c_1(\rho)=c_1(\det(\rho)),
\end{equation*}
where $\det\colon \GLn{n} \to \GLn{1}$ is the determinant homomorphism and $\det(\rho)=\det\circ\rho$ (for a finite group see~\cite[Appendix (7)]{Atiyah-PMIHES}, for the general case see~\cite{zbMATH03868233}).
\end{remark}

\subsection{Cheeger-Chern-Simons classes}
\label{subsec:CCS}

Let $M$ be a smooth manifold. Let $(E,M,\nabla)$ be a vector bundle of rank $n$ over $M$ with a connection $\nabla$ and let $\Theta$ be its curvature.
Chern-Weil theory allows us to construct characteristic classes of the vector bundle $E$ using the curvature of a connection \cite{Chern:TDG}.

Let $\Lambda=\ZZ$ or $\QQ$.
Let $\Omega^k(M)$ be the group of smooth complex-valued differential $k$-forms on $M$, $\Omega_{cl}^k(M)$ the subgroup of closed $k$-forms and $\Omega^k_{cl}(M;\Lambda)$ the subgroup of closed $k$-forms with periods in $\Lambda$, i.~e., $\Omega^k_{cl}(M;\Lambda)=\ker\bigl(\Omega^k_{cl}(M)\to H^k(M;\CC/\Lambda)\bigr)$.
Let $\mathcal{M}_n\cong\CC^{n^2}$ denote the space of $n\times n$ matrices. Let $P_k\colon\mathcal{M}_n\to\CC$ be a symmetric homogeneous polynomial of degree $k$ with coefficients in $\Lambda$, which is invariant, i.~e., $P_k(A)=P_k(gAg^{-1})$ for all $A\in\mathcal{M}_n$ and all $g\in\GLn{n}$.
Denote by $I^k(\GLn{n})$ the set of invariant polynomials and set $I(\GLn{n})=\bigoplus_{k=0}^\infty I^k(\GLn{n})$ which is a commutative algebra.
Evaluating $P_k$ on the curvature $\Theta$ we get the $2k$-form $P_k(E,\nabla):=P_k(\Theta)$ which is closed and its de Rham cohomology class $P_k(E)=[P_k(E,\nabla)]\in H^{2k}(M;\CC)$ does not depend on the choice of connection $\nabla$. This defines the Weil homomorphism $I(\GLn{n})\to H^\bullet(M;\CC)$ which assigns to an invariant polynomial a cohomology class of $M$.
It is an algebra homomorphism.

For instance, using the invariant symmetric homogeneous polynomial $C_k(A)$ of degree $k$ given by the relation
\begin{equation}\label{eq:Chern-poly}
 \det(A+tI)=\sum_{k=0}^nC_k(A)t^{n-k},
\end{equation}
one gets the \emph{Chern forms} $c_k(E,\nabla)=C_k(\Theta)\in\Omega^{2k}_{cl}(M;\ZZ)$ for $0\leq k\leq n=\rank E$.
Its de Rham cohomology class $c_k^{dR}(E)=[c_k(E,\nabla)]\in H^{2k}(M;\CC)$ is the image of the $k$-th Chern class $c_k(E)$ of $E$ in the exact sequence 
\begin{equation}\label{eq:coef.CheegerSimons2}
 \begin{aligned}
   \dots\to H^{2k-1}(M;\CC/\ZZ) \xrightarrow{q} H^{2k}(M;\ZZ) &\xrightarrow{r} H^{2k}(M;\CC) \xrightarrow{p_\ZZ}\cdots\\
   c_k(E) &\mapsto c_k^{dR}(E)
\end{aligned}   
\end{equation}
induced by the short exact sequence of coefficients
\begin{equation}
    \label{eq:coef.CheegerSimons1}
    0 \to \ZZ \to \CC \to \CC/\ZZ \to 0.
\end{equation}

More generally, in place of an invariant polynomial, one can use an invariant power series of the form $P=P_0+P_1+P_2+\cdots$,
where each $P_k$ is an invariant homogeneous polynomial of degree $k$. 
For example, the invariant formal power series 
\begin{equation}\label{eq:Chern.power}
 ch(A)=\trace(e^{A/2\pi i}),
\end{equation}
where $\trace$ is the trace, gives $ch(E,\nabla)\in\Omega^{\mathrm{even}}(M)$ representing the \emph{Chern character} $ch(E)\in H^{\mathrm{even}}(M;\QQ)$, which can be writen in terms of the Chern classes of $E$ as
\begin{equation*}
 ch(E)=n+c_1(E)+\frac{1}{2}\bigl(c_1(E)^2-2c_2(E)\bigr)+\frac{1}{6}\bigl(c_1(E)^3-3c_1(E)c_2(E)+3c_3(E)\bigr)+\cdots.
\end{equation*}

In \cite{zbMATH04007261} Cheeger and Simons defined the ring of differential characters of $M$ and they constructed a lift of the Weil homomorfism to get secondary characteristic classes.

Let $C_k(M;\ZZ)\supset Z_k(M;\ZZ)$ be the group of smooth singular $k$-chains and $k$-cycles in $M$ and $\partial\colon C_k(M;\ZZ)\to C_{k+1}(M;\ZZ)$ the boundary operator.

The group of \emph{differential characters} of degree\footnote{It is convenient to shift the degree by +1 as compared to the original definition in \cite{zbMATH04007261}.}  $k$ is defined as
\begin{equation*}
\widehat{H}^k(M;\CC/\Lambda)=\left\{\psi\in \Hom(Z_{k-1}(M;\ZZ),\CC/\Lambda)\,\Bigl|\,\psi(\partial(a))=\int_a \omega_\psi \mod\Lambda\Bigr.\right\}.
\end{equation*}
The form $\omega_\psi$ is uniquely determined by $\psi$, we have that $\omega_\psi\in\Omega^k_{cl}(M;\Lambda)$. 
Let $r$ be the natural map $r\colon H^k(M;\Lambda)\to H^k(M;\CC)$ and given $\omega\in\Omega^k_{cl}(M;\Lambda)$ denote by $[\omega]$ its de Rham class.
Set
\begin{equation*}
 R^k(M;\Lambda)=\{(\omega,u)\in\Omega^k_{cl}(M;\Lambda)\times H^k(M;\Lambda)\,|\, r(u)=[\omega]\}.
\end{equation*}
There are natural exact sequences (see \cite[Theorem~1.1]{zbMATH04007261}):
\begin{gather}
 0\to H^{k-1}(M;\CC/\Lambda)\to \widehat{H}^k(M;\CC/\Lambda)\xrightarrow{\delta_1} \Omega^{k}_{cl}(M;\Lambda)\to0,\label{eq:ES1}\\
 0\to\Omega^{k-1}(M)/\Omega^{k-1}_{cl}(M;\Lambda)\to\widehat{H}^k(M;\CC/\Lambda)\xrightarrow{\delta_2}H^k(M;\Lambda)\to0.\label{eq:ES2}\\
 0\to H^{k-1}(M;\CC)/r(H^k(M;\Lambda))\to\widehat{H}^k(M;\CC/\Lambda)\xrightarrow{(\delta_1,\delta_2)} R^k(M;\Lambda)\to0.\label{eq:ES3}
\end{gather}
Given $\psi\in\widehat{H}^k(M;\CC/\Lambda)$ we have that $r(\delta_2(\psi))=[\delta_1(\psi)]$.
In particular, by \eqref{eq:ES3} if $H^{k-1}(M;\CC)=0$ then $\psi$ is determined uniquely by $(\delta_1(\psi),\delta_2(\psi))$.
In \cite{zbMATH04007261} a graded ring structure is defined in $\widehat{H}^\bullet(M;\CC/\Lambda)=\bigoplus_{k=0}^{\dim M}\widehat{H}^k(M;\CC/\Lambda)$
so that $\delta_1$ and $\delta_2$ are ring homomorphisms.

Let  $(\gamma^n,\BG[\GLn{n}],\nabla_\mathrm{univ})$ be the universal vector bundle with universal connection $\nabla_\mathrm{univ}$ (see Remark~\ref{rem:universal.connection}).
Let $P_k$ be an invariant polynomial, and $P_k(\gamma^n,\nabla_\mathrm{univ})\in\Omega^{2k}_{cl}(\BG[\GLn{n}];\Lambda)$ the form with periods in $\Lambda$ representing the characteristic
class $P_k(\gamma^n)\in H^{2k}(\BG[\GLn{n}];\Lambda)$.
Since $H^\mathrm{odd}(\BG[\GLn{n}];\CC)=0$,
considering \eqref{eq:ES3} for $\BG[\GLn{n}]$,
the element 
$(P_k(\gamma^n,\nabla_\mathrm{univ}),P_k(\gamma^n))\in R^{2k}(\BG[\GLn{n}];\Lambda)$ determines uniquely a differential character
$\widehat{P}_k=\widehat{P}_k(\gamma^n,\nabla_\mathrm{univ})\in \widehat{H}^{2k}(\BG[\GLn{n}];\CC/\Lambda)$, which we call the \textit{universal differential character}
defined by $P_k$.
Let $(E,M,\nabla)$ be a vector bundle with a connection $\nabla$. Let $\bar{f}\colon M\to \BG[\GLn{n}]$ be the classifying map with connection of $(E,M,\nabla)$.
Then $\widehat{P}_k(E,\nabla)=\bar{f}^*(\widehat{P}_k)\in \widehat{H}^{2k}(M;\CC/\Lambda)$ is a differential character which is
natural, and such that
\begin{equation}\label{eq:CCS.C}
 \delta_1(\widehat{P}_k(E,\nabla))=P_k(E,\nabla),\qquad \delta_2(\widehat{P}_k(E,\nabla))=P_k(E).
\end{equation}
Notice that the classes $\widehat{P}_k(E,\nabla)$ depend on the connection $\nabla$. If $\nabla_0$ and $\nabla_1$ are two connections on $E$,  we have
\begin{equation*}
\delta_2(\widehat{P}_k(E,\nabla_1)-\widehat{P}_k(E,\nabla_0))=\delta_2(\widehat{P}_k(E,\nabla_1))-\delta_2(\widehat{P}_k(E,\nabla_0))=0,
\end{equation*}
thus, by \eqref{eq:ES2} the difference of the characters must be the reduction of a differential form $TP_k(\nabla_1,\nabla_0)\in\Omega^{k-1}(M)$ modulo $\Lambda$. Such form can be computed using the Chern-Simons construction \cite{Chern-Simons:CFGI}. 
Let $\nabla_t$ be a smooth curve of connections joining $\nabla_0$ and $\nabla_1$ (e.~g., $\nabla_t=t\nabla_1+(1-t)\nabla_0$). Then $\nabla_t$ defines a connection $\widetilde{\nabla}$ on the
bundle $E\times I$. Let $\widetilde{\Theta}$ be the curvature of $\widetilde{\nabla}$. We can construct the characteristic form $P_k(E\times I,\widetilde{\nabla})=P_k(\widetilde{\Theta})\in\Omega^{2k}(M\times I)$ and the secondary \emph{Chern-Simons form}
\begin{equation}\label{eq:CS.form}
TP_k(\nabla_1,\nabla_0)=\pi_*(P_k(E\times I,\widetilde{\nabla}))\in\Omega^{2k-1}(M),
\end{equation}
where $\pi\colon M\times I\to M$ is the projection and $\pi_*\colon\Omega^{2k}(M\times I)\to\Omega^{2k-1}(M)$ is integration along the fibres (see \cite[Remark~19-1.8]{Husemoller:Bundles}). Hence
\begin{equation*}
\bigl(\widehat{P}_k(E,\nabla_1)-\widehat{P}_k(E,\nabla_0)\bigr)(a)=\int_aTP_k(\nabla_1,\nabla_0)\mod\Lambda,\quad a\in Z_{k-1}(M;\ZZ).
\end{equation*}
Since $TP_k(\nabla_1,\nabla_0)$ is closed we get a class $[TP_k(\nabla_1,\nabla_0)]\in H^{2k-1}(M;\CC)$ which is independent of the choice of path of connections $\nabla_t$.
If both connections $\nabla_0$ and $\nabla_1$ are flat, by \eqref{eq:ES1} $\widehat{P}_k(E,\nabla_0),\widehat{P}_k(E,\nabla_1)\in H^{2k-1}(M;\CC/\Lambda)$ and in this case we have
\begin{equation}\label{eq:CS.class.char}
\widehat{P}_k(E,\nabla_1)-\widehat{P}_k(E,\nabla_0)=p_\Lambda\bigl([TP_k(\nabla_1,\nabla_0)]\bigr),
\end{equation}
where $p_\Lambda\colon H^{2k-1}(M;\CC)\to H^{2k-1}(M;\CC/\Lambda)$ is the natural homomorphism.

Taking $P_k$ to be the polynomials $C_k$ defined by \eqref{eq:Chern-poly} we get the \emph{Chern differential characters} $\widehat{c}_k(E,\nabla)\in\widehat{H}^{2k}(M;\CC/\ZZ)$. 
On the other hand, using the power series given in \eqref{eq:Chern.power} we get the differential character $\widehat{ch}(E,\nabla)\in\widehat{H}^{\mathrm{even}}(M;\CC/\QQ)$ which can be written in
terms of $\widehat{c}_k(E,\nabla)$ (we drop $(E,\nabla)$ from the formula), see \cite[(4.10)]{zbMATH04007261}
\begin{equation*}
 \widehat{ch}(E,\nabla)=n+\widehat{c}_1+\frac{1}{2}\bigl(\widehat{c}_1*\widehat{c}_1-2\widehat{c}_2\bigr)+\frac{1}{6}\bigl(\widehat{c}_1*\widehat{c}_1*\widehat{c}_1-3\widehat{c}_1*\widehat{c}_2+3\widehat{c}_3\bigr)+\cdots,
\end{equation*}
where $*$ is the product in $\widehat{H}^{\bullet}(M;\CC/\QQ)$. We denote by $\widehat{ch}_k(E)$ the component of $\widehat{ch}(E)$ of degree $2k$, hence we have
\begin{align}
 \widehat{ch}_0(E,\nabla)&=n,\qquad \widehat{ch}_1(E,\nabla)=\widehat{c}_1(E,\nabla),\notag\\ 
 \widehat{ch}_2(E,\nabla)&=\frac{1}{2}\bigl(\widehat{c}_1(E,\nabla)*\widehat{c}_1(E,\nabla)-2\widehat{c}_2(E,\nabla)\bigr),\dots\label{eq:ch2}
\end{align}

\subsection{Cheeger-Chern-Simons classes of a representation}

Consider a representation $\rho\colon\pi_1(M)\to\GLn{n}$. Associated to $\rho$ we get the \emph{flat} $\GLn{n}$-bundle $V_\rho=\widetilde{M}\times_\rho\CC^n\to M$, where $\widetilde{M}$ is the
universal cover of $M$ and $\widetilde{M}\times_\rho\CC^n$ is the quotient of $\widetilde{M}\times\CC^n$ by the action of $\pi_1(M)$.
Since $V_\rho$ admits a flat connection $\nabla_\rho$, we have that $c_k(V_\rho,\nabla_\rho)=0$ and $c_k^{dR}(V_\rho)=0$, which imply that the Chern class $c_k(V_\rho)$ is in the image of the homomorphism from $q\colon H^{2k-1}(M;\CC/\ZZ)\to H^{2k}(M;\ZZ)$ in \eqref{eq:coef.CheegerSimons2}. Also by \eqref{eq:CCS.C} and \eqref{eq:ES1} we have that
\begin{equation}\label{eq:CCS.rep}
 \widehat{c}_k(V_\rho,\nabla_\rho)\in H^{2k-1}(M;\CC/\ZZ),
\end{equation}
and it is a functorial lifting of the Chern class $c_k(V_\rho)$ by $q$.
We define, respectively, the  \emph{Chern and Cheeger-Chern-Simons classes} of the representation $\rho$ by 
\begin{equation*}
 c_{\rho,k}=c_k(V_\rho)\in H^{2k}(M;\ZZ),\qquad \widehat{c}_{\rho,k}=\widehat{c}_{k}(V_\rho,\nabla_\rho)\in H^{2k-1}(M;\CC/\ZZ).
\end{equation*}

We set $\widehat{c}_\rho=1+\widehat{c}_{\rho,1}+\dots+\widehat{c}_{\rho,n}\in \widehat{H}^{\mathrm{even}}(M;\CC/\ZZ)$. If $\rho\colon\pi_1(M)\to\GLn{n}$ and $\tau\colon\pi_1(M)\to\GLn{m}$ are
two representations, we have that \cite[Theorem~4.6]{zbMATH04007261}
\begin{equation}\label{eq:wsf.ccs}
 \widehat{c}_{\rho\oplus\tau}=\widehat{c}_{\rho}*\widehat{c}_{\tau}.
\end{equation}

\begin{remark}\label{rem:uccs}
Let $\widehat{C}_k\in \widehat{H}^{2k}(\BG[\GLn{n}];\CC/\ZZ)$ be the $k$-th universal Chern differential character.
Since the pull-back $(\gamma^n_d,\BG[\GLn{n}]^d,\nabla_\mathrm{univ}^d)$ of the universal bundle with universal connection $(\gamma^n,\BG[\GLn{n}],\nabla_\mathrm{univ})$  by the map $\iota \colon \BG[\GLn{n}]^d \to \BG[\GLn{n}]$ is flat
(see Remark~\ref{rem:universal.connection}), we have that
\begin{equation*}
 \widehat{c}_k=\iota^*(\widehat{C}_k)\in H^{2k-1}(\BG[\GLn{n}]^d;\CC/\ZZ).
\end{equation*}

Since the vector bundle $V_\rho\to M$ has a flat connection, its classifying map with connection $\bar{f}\colon M\to \GLn{n}$ factorizes as in \eqref{eq:flat.fact}, i.~e., $\bar{f}=\iota\circ f$,
with $f\colon M\to\GLn{n}^d$.
Hence, the $k$-th Cheeger-Chern-Simons class of the representation $\rho$ is given by
\begin{equation*}
 \widehat{c}_{\rho,k}=f^*(\widehat{c}_k).
\end{equation*}
The class $\widehat{c}_k\in H^{2k-1}(\BG[\GLn{n}]^d;\CC/\ZZ)$ is called the \emph{universal $k$-th Cheeger-Chern-Simons class for flat bundles}
\end{remark}

\begin{remark}\label{rem:G.M}
The Chern classes $c_k(\rho)$ of a representation $\rho\colon \Gamma=\pi_1(M)\to\GLn{n}$ defined in Subsection~\ref{ssec:cc.rho}
are cohomology classes of the classifying space $B\Gamma$,
while the Chern classes $c_{\rho,k}$ defined in this subsection are cohomology classes of $M$.
They are related in the following way. Since $\pi_1(M)=\Gamma$ there is a map $\phi\colon M\to \BG[\Gamma]$.
Then $c_{\rho,k}=\phi^*(c_k(\rho))$.
\end{remark}

\begin{remark}\label{rem:G.M.ccs}
One can also define the Cheeger-Chern-Simons classes of a representation $\rho\colon \Gamma\to\GLn{n}$ as cohomology classes of the classifying space $\BG[\Gamma]$.
Let $\widehat{c}_k\in H^{2k-1}(\BG[\GLn{n}]^d;\CC/\ZZ)$ be the $k$-th universal Cheeger-Chern-Simons class (see Remark~\ref{rem:uccs}).
Let $\BG[\rho] \colon \BG[\Gamma] \to \BG[\GLn{n}]^d$ be the map induced between classifying spaces by $\rho$.
The \emph{$k$-th Cheeger-Chern-Simons class of $\rho$} is the pullback
\begin{equation}\label{eq:ccs.BG}
 \widehat{c}_k(\rho)=\BG[\rho]^*(\widehat{c}_k)\in H^{2k-1}(\BG[\Gamma];\CC/\ZZ).
\end{equation}
Let $M$ be a smooth manifold with fundamental group $\pi_1(M)=\Gamma$ and consider the map $\phi\colon M\to \BG[\Gamma]$.
Then $\widehat{c}_{\rho,k}=\phi^*(\widehat{c}_k(\rho))$.
\end{remark}

Let $\Gamma$ be a discrete group. Let $\rho \colon \Gamma \to \GLn{n}$ be a representation.
By \eqref{eq:ccs.BG} we have that
$\widehat{c}_k(\rho)\in H^{2k-1}(\BG[\Gamma];\CC/\ZZ)$. By the Universal Coefficient Theorem and using that $\CC/\ZZ$ is divisible (equivalently, injective~\cite[Corollary~4.2]{zbMATH03935317}) we have
\begin{equation}\label{eq:Iso.Hom.Chern}
    H^{2k-1}(\BG[\Gamma];\CC/\ZZ) \cong \Hom(H_{2k-1}(\BG[\Gamma];\ZZ),\CC/\ZZ).
\end{equation}
Thus, the Chern-Cheeger-Simons classes $\widehat{c}_k(\rho)$ of $\rho$ can be identified as morphisms
\begin{equation*}
 \widehat{c}_k(\rho) \colon H_{2k-1}(\BG[\Gamma];\ZZ) \to \CC/\ZZ,
\end{equation*}
\begin{definition}\label{def:CCS.number}
Let $\kappa$ be a homology class in $H_{2k-1}(\BG[\Gamma];\ZZ)$, we call the image
\begin{equation*}
\widehat{c}_k(\rho)(\kappa)\in\CC/\ZZ,
\end{equation*}
the \emph{$k$-th Cheeger-Chern-Simons-number (CCS-number) of $\rho$} with respect to $\kappa$.
\end{definition}

A representation $\rho \colon \pi_1(M) \to \GLn{n}$ is said to be \emph{topologically trivial} if the vector bundle $V_\rho\to M$ is topologically trivial, i.\ e., it is isomorphic
as a topological vector bundle to the product bundle $M\times\CC^n\to M$.

\section{CCS-numbers of compact oriented 3-manifolds}

In this section, we restrict to the case when the manifold is a compact oriented $3$-manifold $L$.
We define the first and second CCS-numbers of a representation $\rho\colon\Gamma=\pi_1(L)\to \GLn{n}$ and
we prove that the second CCS-number for topologically trivial representations $\rho$ agrees with the topological index of the Dirac operator on $L$ with respect to the flat complex vector bundle associated to $\rho$, defined by Atiyah, Patodi and Singer in  \cite[p.~87]{Atiyah-Patodi-Singer:SARGIII}.
For the case when $L$ is a rational homology $3$-sphere, using the Index Theorem for flat bundles, we give a formula for the second CCS-number of any representation.

\subsection{CCS-numbers of representations}

Let $L$ be a compact oriented $3$-manifold and consider a representation $\rho\colon\Gamma=\pi_1(L)\to \GLn{n}$. By \eqref{eq:CCS.rep} we have that
$\widehat{c}_{\rho,k}\in H^{2k-1}(M;\CC/\ZZ)$. By the same argument of \eqref{eq:Iso.Hom.Chern} there is an isomorphism $H^{2k-1}(L;\CC/\ZZ) \cong \Hom(H_{2k-1}(L;\ZZ),\CC/\ZZ)$.
\begin{definition}
Let $\nu$ be a homology class in $H_{1}(L;\ZZ)$, we call the image
\begin{equation*}
\widehat{c}_{\rho,1}(\nu)\in\CC/\ZZ,
\end{equation*}
the \emph{first CCS-number of $\rho$} with respect to $\nu$.
Let $[L]\in H_{3}(L;\ZZ)$ be the fundamental class. We define the \emph{second CCS-number of $\rho$} by
\begin{equation*}
\widehat{c}_{\rho,2}([L])\in\CC/\ZZ.
\end{equation*}
\end{definition}
Notice that the second CCS-number of $\rho$ with respect to another class in $H_{3}(L;\ZZ)$ is a multiple of $\widehat{c}_{\rho,2}([L])$.
The relation with Definition~\ref{def:CCS.number} is the following. Since $\pi_1(L)=\Gamma$ there is a map $\phi\colon L\to \BG[\Gamma]$.
Let $\nu\in H_{2k-1}(L;\ZZ)$, with $k=1,2$. Then we have
\begin{equation}\label{eq:ccsL.ccsBG}
\widehat{c}_{\rho,k}(\nu)=\phi^*(\widehat{c}_{k}(\alpha))(\nu)=\widehat{c}_{k}(\rho)(\phi_*(\nu)).
\end{equation}

\subsection{The index theorem for flat bundles}

Let $A$ be a self-adjoint elliptic operator acting on the space of sections $C^\infty(M,E)$ of a vector bundle $E\to M$ over a compact manifold $M$.
Then $A$ has a discrete spectrum with real eigenvalues $\{\lambda\}$. In \cite{Atiyah-Patodi-Singer:SARGI} Atiyah, Patodi and Singer define a complex
valued function $\eta(s;A)$, of the complex variable $s$, called the
\emph{$\eta$-series} of $A$ by
\begin{equation*}
\eta(s;A)=\sum_{\lambda\not=0}(\sign\lambda)\abs{\lambda}^{-s},
\end{equation*}
where the sum is taken over the non-zero eigenvalues of $A$. This
series converges when the real part $\Re(s)$ of $s$ is sufficiently large and by results of
Seeley \cite{Seeley:CompPowers} extends by analytic continuation
to a meromorphic function on the whole $s$-plane and it is finite at
$s=0$.
The number $\eta(A)=\eta(0;A)$ is called the \emph{$\eta$-invariant} of $A$
and it is a spectral invariant which measures the asymmetry of the
spectrum of $A$.
They also define a refinement of the $\eta$-series which takes into
account the zero eigenvalues of $A$ by setting $\xi(s;A)=\frac{h+\eta(s;A)}{2}$, where $h$ is the dimension of the
kernel of $A$.

Consider a unitary representation $\rho\colon\pi_1(M)\rightarrow U(n)$ and the flat vector bundle $V_\rho$ given by $\rho$.
One can couple the operator $A$ to $V_{\rho}$ to get an operator $A_{\rho}$ acting on $C^\infty(M,E\otimes V_{\rho})$ which is again self-adjoint.
The reduced $\eta$ and $\xi$-series are given by
\begin{equation}\label{eq:reduced}
\tilde{\eta}_\rho(s;A)=\eta(s;A_\rho)-n\eta(s;A),\quad \tilde{\xi}_\rho(s;A)=\xi(s;A_\rho)-n\xi(s;A),
\end{equation}
where $n$ is the dimension of the representation $\rho$.
Following \cite[Section~2]{Atiyah-Patodi-Singer:SARGIII} the functions \eqref{eq:reduced} are finite at $s=0$ and reducing
modulo $\ZZ$ then
\begin{equation*}
\tilde{\eta}_\rho(A)=\tilde{\eta}_\rho(0;A)\in\RR/\ZZ,\qquad
\tilde{\xi}_\rho(A)=\tilde{\xi}_\rho(0;A)\in\RR/\ZZ,
\end{equation*}
are homotopy invariants of $A$.

If the representation $\rho\colon\pi_1(M)\to \GLn{n}$ is not unitary the twisted operador $A_\rho$ is not self-adjoint, but it has a self-adjoint symbol and this is
enough to define $\eta(s;A_\rho)$. To define $\xi(s;A_\rho)$ one takes $h$ to be the dimension arising from the spectrum on the imaginary axis and now $\tilde{\xi}_\rho(0;A)$
is a complex number modulo integers, that is $\tilde{\xi}_\rho(A)\in\CC/\ZZ$.

If $A$ is a self-adjoint elliptic operator of order $m$ acting on the space of sections $C^\infty(M,E)$ of a vector bundle $E\to M$ over $M$,
its homotopy class depends only on the homotopy class of its leading symbol $\sigma_m(A)$.
Denote by $T^*M$ \emph{the cotangent bundle} of $M$. By \cite[Proposition~3.1]{Atiyah-Patodi-Singer:SARGIII} there is a one-to-one correspondence between the stable classes of self-adjoint symbols on $M$ and the elements of $K^1(T^*M)$.

Fixing a representation $\rho\colon\pi_1(M)\to \GLn{n}$, the map $A\mapsto \tilde{\xi}_\rho(A)$ induces a homomorphism \cite[p.~87]{Atiyah-Patodi-Singer:SARGIII}
\begin{align*}
    \ind_\rho \colon K^1(T^*M) &\to \CC / \ZZ\\
    \sigma_m(A) &\mapsto \tilde{\xi_\rho}(A)
\end{align*}
called \emph{the analytical index of $A$ (or of the symbol class of $A$) with respect to the flat bundle given by $\rho$}.

On the other hand, the representation $\rho\colon\pi_1(M)\to \GLn{n}$ defines an element $[\rho]\in K^{-1}(M,\CC/\ZZ)$ in K-theory with coefficients in $\CC/\ZZ$,
see \cite[p.~90]{Atiyah-Patodi-Singer:SARGIII}. There is a pairing
\begin{equation*}
 K^{-1}(M,\CC/\ZZ)\otimes K^1(T^*M)\to K^0(T^*M,\CC/\ZZ),
\end{equation*}
and a homomorphism
\begin{equation*}
 \Ind\colon K^0(T^*M,\CC/\ZZ)\to\CC/\ZZ,
\end{equation*}
which extends the usual topological index of \cite{Atiyah-Singer:IEOI}. For each representation $\rho$ there is a topologically defined homomorphism
\begin{align*}
    \Ind_\rho \colon K^1(T^*M) &\to \CC/\ZZ\\
    \sigma_m(A) &\mapsto -\Ind([\rho]\cdot \sigma_m(A))
\end{align*}
called the \emph{the topological index with respect to the flat bundle given by $\rho$}.

The \emph{Index Theorem for flat bundles} \cite[(5.3)]{Atiyah-Patodi-Singer:SARGIII}) states that if $M$ is odd dimensional, then $\ind_\rho$ coincides with $\Ind_\rho$.

\begin{remark}
If $\rho\colon\pi_1(M)\rightarrow U(n)$ is a unitary representation then both indices $\ind_\rho$ and $\Ind_\rho$ have values in $\RR/\ZZ$.
\end{remark}

\subsection{The second CCS-number of \texorpdfstring{$\rho$}{r}}\label{ssec:2ndcn}

Let $L$ be a compact oriented $3$-manifold, then it is a spin-manifold \cite[Chapter~VII, Theorem~1]{zbMATH00042662} and it has a Dirac operator $D$, which is a first order, self-adjoint, elliptic operator, acting on sections of the spinor bundle (see~\cite[\S~3.4]{zbMATH06755669} or \cite[Example~5.9]{Lawson-Michelsohn:SpinGeo}).

\begin{theorem}\label{th:C2.APS}
Let $L$ be a compact oriented $3$-manifold.
Let $D$ be the Dirac operator on $L$ and let $\sigma=[\sigma_1(D)]$ be the symbol class of $D$ in $K^1(T^*L)$.
Let $\rho \colon\pi_1(L) \to \mathrm{GL}(n,\CC)$ be a topologically trivial representation. Then, the second CCS-number of $\rho$ and the topological index of $D$ with respect to
$\rho$, coincide, that is,
\begin{equation*}
 \widehat{c}_{\rho,2}([L])=\Ind_\rho\sigma.
\end{equation*}
\end{theorem}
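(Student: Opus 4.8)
The strategy is to connect three descriptions of the same number in $\CC/\ZZ$: the second CCS-number $\widehat{c}_{\rho,2}([L])$, the reduced invariant $\tilde\xi_\rho(D)$, and the topological index $\Ind_\rho\sigma$. The last two are identified directly by the Index Theorem for flat bundles \cite[(5.3)]{Atiyah-Patodi-Singer:SARGIII}, since $L$ is odd dimensional: $\Ind_\rho\sigma=\ind_\rho\sigma=\tilde\xi_\rho(D)$. So the real content is to prove $\widehat{c}_{\rho,2}([L])=\tilde\xi_\rho(D)$. First I would reduce to the unitary case and then to a single generator: since $\rho$ is topologically trivial it factors through $\SLn{n}$; using \eqref{eq:wsf.ccs} and the multiplicativity of both $\widehat{c}$ and $\tilde\xi$ under direct sums, together with the fact that $\widehat{c}_{\rho,1}$ is determined by $\det\rho$ (Remark~\ref{rem:C1Det}), which is trivial here, the degree-$2$ part $\widehat{ch}_2$ in \eqref{eq:ch2} reduces (modulo the $\widehat{c}_1$ terms, which vanish) to $-\widehat{c}_{\rho,2}$. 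Thus it suffices to compare $\widehat{ch}_2(V_\rho,\nabla_\rho)([L])$ with $-\tilde\xi_\rho(D)$, and the Chern character is the natural vehicle because the APS index formula is stated in terms of $\widehat{ch}$ pairing against the symbol of $D$.

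The heart of the argument is the APS Index Theorem for flat bundles in its differential-character form. Because $V_\rho$ is topologically trivial, $\widehat{ch}(V_\rho,\nabla_\rho)$ lies in $H^{\mathrm{odd}}(L;\CC/\QQ)$ by \eqref{eq:CCS.rep} and the analogous statement for $\widehat{ch}$, so it defines a class in $K^{-1}(L;\CC/\QQ)$, and in fact, since $V_\rho$ is topologically trivial, the element $[\rho]\in K^{-1}(L;\CC/\ZZ)$ of \cite[p.~90]{Atiyah-Patodi-Singer:SARGIII} has Chern character exactly $\widehat{ch}(V_\rho,\nabla_\rho)$ up to the standard normalization. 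The plan is then: (i) express $\Ind_\rho\sigma=-\Ind([\rho]\cdot\sigma)$ using the APS formula for the index with $\CC/\ZZ$ coefficients, which on a $3$-manifold picks out the degree-$3$ component of $\widehat{ch}([\rho])\cdot\widehat{\mathrm{Td}}(TL)\cdot(\text{symbol contribution of }D)$ evaluated on $[L]$; (ii) observe that for the Dirac operator the symbol/Todd contribution of degree $0$ is $1$ (this is exactly why one chooses the Dirac operator: its index density is $\widehat{A}(TL)$, whose degree-$0$ part is $1$, and in dimension $3$ all positive-degree contributions pair trivially against $\widehat{ch}([\rho])$ except the degree-$3$ one of $\widehat{ch}_2$); (iii) conclude $\Ind_\rho\sigma=-\widehat{ch}_2(V_\rho,\nabla_\rho)([L])=\widehat{c}_{\rho,2}([L])$, using step one's reduction.

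The main obstacle I expect is bookkeeping the normalizations and signs: the factor $1/2\pi i$ in \eqref{eq:Chern.power}, the shift-by-one convention in the degree of differential characters noted in the footnote, the sign in the definition of $\Ind_\rho$ as $-\Ind([\rho]\cdot\sigma)$, and the precise form in which \cite[(5.3)]{Atiyah-Patodi-Singer:SARGIII} expresses the pairing $K^{-1}(L;\CC/\ZZ)\otimes K^1(T^*L)\to\CC/\ZZ$ in terms of characteristic classes. One must check that after all these conventions the degree-$3$ term that survives is precisely $-\widehat{c}_{\rho,2}$ and not, say, $\widehat{c}_{\rho,2}$ or a rational multiple thereof; here the cleanest route is probably to avoid the general APS formula and instead invoke the known identity $\tilde\xi_\rho(D)=\widehat{ch}_2(V_\rho,\nabla_\rho)$-type statement in the flat case directly, i.e. to use that the reduced eta invariant of the twisted Dirac operator equals the evaluation of the Cheeger–Simons character of the flat bundle on the fundamental class, which is exactly the spectral-geometric content of \cite[(5.3)]{Atiyah-Patodi-Singer:SARGIII} for odd-dimensional manifolds. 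A secondary subtlety is justifying the reduction to the unitary case: a topologically trivial, hence $\SLn{n}$-valued, representation need not be unitary, but both $\widehat{c}_{\rho,2}$ and $\Ind_\rho$ are defined for non-unitary $\rho$ (as recalled in the excerpt), and both are continuous/holomorphic in $\rho$ along the path to a unitary representative within the same connected component of the representation variety, so the identity extends by analytic continuation from the unitary locus where the APS results were originally proved.
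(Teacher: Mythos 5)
Your overall route --- unwind the topological index $\Ind_\rho\sigma=-\Ind([\rho]\cdot\sigma)$ through the Chern character of the class $[\rho]\in K^{-1}(L;\CC/\ZZ)$ and extract the degree-$3$ component --- is the same as the paper's, but two of your steps have genuine gaps. The most serious one is the passage from $\widehat{ch}_2$ to $-\widehat{c}_{\rho,2}$. What the index formula actually produces is the degree-$3$ part of the Chern--Simons transgression $[Tch(\nabla_{\text{triv}},\nabla_\rho)]\in H^3(L;\CC)$ evaluated on $[L]$ and reduced mod $\ZZ$, and this class decomposes as $\tfrac12[T(C_1\cdot C_1)]-[TC_2]$. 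Knowing that $\widehat{c}_{\rho,1}=0$ in $H^1(L;\CC/\ZZ)$ (because $\det\rho$ is trivial) only tells you that $[T(C_1\cdot C_1)]$ is an \emph{integral} class; because of the factor $\tfrac12$ this still leaves a possible ambiguity of $\tfrac12$ in $\CC/\ZZ$. The paper's proof spends most of its length on exactly this point: it shows $[T(C_1\cdot C_1)(\nabla_{\text{triv}},\nabla_\rho)]=0$ in $H^3(L;\CC)$ on the nose, by writing $C_1(\widetilde{\Theta})=d\mathcal{C}$ on $L\times I$ (using topological triviality of $V_\rho\times I$) and checking that the boundary terms of the fibre-integration homotopy formula vanish. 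Your phrase ``modulo the $\widehat{c}_1$ terms, which vanish'' conflates vanishing mod $\ZZ$ with vanishing of the transgression class in $H^3(L;\CC)$, and your fallback --- ``invoke the known identity $\tilde\xi_\rho(D)=\widehat{ch}_2$-type statement directly'' --- is circular, since that identity is essentially what is to be proved.

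Second, the reduction to the unitary case by ``analytic continuation along the path to a unitary representative in the same component of the representation variety'' is both unnecessary and unsound: a topologically trivial ($\SLn{n}$-valued) representation need not lie in a path component of $\Hom(\pi_1(L),\SLn{n})$ containing any unitary representation (holonomy representations of hyperbolic $3$-manifolds are the standard obstruction), and in any case the machinery of \cite[\S5]{Atiyah-Patodi-Singer:SARGIII} used here --- the decomposition of $[\rho]$ into $a_\rho\in K^{-1}(L,\QQ/\ZZ)$ and $b_\rho\in K^{-1}(L,\CC)$ and the formula $\Ind_\rho\sigma=-\Ind_{\CC}(b_\rho\sigma)+\Ind_{\ZZ/k\ZZ}(\sigma a_\rho)$ --- is already set up for arbitrary $\GLn{n}$ representations. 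The only inputs needed from topological triviality are $a_\rho=0$ and a global trivialization supplying $\nabla_{\text{triv}}$ (you never mention $a_\rho$, which should be addressed). Finally, note that the theorem as stated is a purely topological identity involving the topological index $\Ind_\rho$; the identification with $\tilde\xi_\rho(D)$ is the subsequent corollary via the index theorem, so opening by replacing $\Ind_\rho\sigma$ with $\tilde\xi_\rho(D)$ inverts the logical order and obscures where the actual work lies.
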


\begin{proof}
The proof consists in to identify $\Ind_\rho\sigma$ for the particular case when $\sigma$ is the symbol class of the Dirac operator and $\rho$ is a topologically trivial representation
of the fundamental group of a compact oriented $3$-manifold, following the proof of the Index Theorem for flat bundles.
In \cite[pp.~88]{Atiyah-Patodi-Singer:SARGIII} $K^1(L,\CC/\ZZ)$ is defined as the cokernel of the following natural map
\begin{equation*}
    K^{-1}(L,\QQ) \to  K^{-1}(L,\QQ/\ZZ) \oplus K^{-1}(L,\CC).
\end{equation*}
Thus, given a representation $\rho\colon\pi_1(L)\to GL(n,\CC)$, the element $[\rho]\in K^{-1}(L,\CC/\ZZ)$ is given by two elements
\begin{equation*}
    a_\rho \in K^{-1}(L,\QQ/\ZZ) \quad \text{and} \quad b_\rho \in K^{-1}(L,\CC).
\end{equation*}
In terms of these elements, the topological index can be writen as (see~\cite[p.~98]{Atiyah-Patodi-Singer:SARGIII})
\begin{equation}
\label{Th.C2.1}
    \Ind_\rho\sigma= -\Ind_{\CC}(b_\rho \sigma)+\Ind_{\ZZ/k\ZZ}(\sigma a_\rho) \mod \ZZ,
\end{equation}
where $\Ind_{\CC}$ is induced from the topological index $K(T^*L)\to \ZZ$ by tensoring with $\CC$ and $\Ind_{\ZZ/k\ZZ}$ is the topological index for $\ZZ/k\ZZ$-coefficients (see \cite[Proposition~6.2 and (8.4)]{Atiyah-Patodi-Singer:SARGIII}).

Let us recall the construction of the elements $a_\rho$ and $b_\rho$ when
$\rho \colon \pi_1(L) \to GL(n,\CC)$ is a topologically trivial representation and $s\colon V_\rho \cong L\times\CC^n$ is a trivialization.
In this case we have that $a_\rho = 0$ (see \cite[p.~89, (i), (ii)]{Atiyah-Patodi-Singer:SARGIII}).
The element $b_\rho$ is constructed as follows. 
The bundle $V_\rho$ has two different flat connections, its canonical flat connection $\nabla_\rho$ and a connection $\nabla_\text{triv}$
coming from the trivial connection on $L\times\CC^n$ by the isomorphism $s$.
Applying the Chern-Simons construction to the bundle $V_\rho$, using the curve of connections $\nabla_t=t\nabla_\text{triv}+(1-t)\nabla_\rho$ and the invariant formal power series $ch(A)$ given in  \eqref{eq:Chern.power} for the Chern character, we get the Chern-Simons form $Tch(\nabla_\text{triv},\nabla_\rho)\in\Omega^\mathrm{odd}(L)$ as in \eqref{eq:CS.form} and a mixed odd-dimensional
cohomology class $\beta=[Tch(\nabla_\text{triv},\nabla_\rho)]\in\ H^\mathrm{odd}(L;\CC)$. We have that (see \cite[p.~89, (iii)]{Atiyah-Patodi-Singer:SARGIII})
\begin{equation}\label{eq:b.beta}
 b_\rho=ch^{-1}(\beta)
\end{equation}
where $ch\colon K^{-1}(L,\CC)\to H^{\mathrm{odd}}(L;\CC)$ is the Chern character.

By Karoubi~\cite{zbMATH04057753,zbMATH04180348}, there is an exact sequence as follows
\begin{equation}
\label{eq:Karoubi1}
    \dots \to K^{-1}(L) \to K^{-1}(L,\CC) \to K^{-1}(L;\CC/\ZZ) \to K^0(L) \to \dots
\end{equation}
By changing the trivialization of $V_\rho$ we obtain a different element in $K^{-1}(L,\CC)$. However, changing the trivialization is equivalent to take an element in $K^{-1}(L)$. Therefore by the exact sequence~\eqref{eq:Karoubi1}, it is clear that the class of $b_\rho$ in $K^{-1}(L;\CC/\ZZ)$ does not depend on the choice of trivialization (compare with \cite[p.~89, (iv)]{Atiyah-Patodi-Singer:SARGIII}).
Thus, when $\rho$ is topologically trivial \eqref{Th.C2.1} becomes
\begin{equation}\label{eq:RedInd1}
    \Ind_\rho\sigma =-\Ind_{\CC}(b_\rho \sigma) \mod \ZZ.
\end{equation}
Since $\sigma$ is the symbol class of the Dirac operador, \eqref{eq:RedInd1} coincides with the direct image homomorphism $K^{-1}(L,\CC/\ZZ)\to\CC/\ZZ$ (see \cite[p.~82~(2)]{Atiyah-Patodi-Singer:SARGIII}), which is given cohomologically by
\begin{align*}
 \Ind_\rho \sigma&= \bigl\{\mathrm{ch}(b_\rho) \cdot \mathscr{I}(L) \bigr\}[L] \mod \ZZ,&&\\
 &= \bigl\{\beta \cdot \mathscr{I}(L) \bigr\}[L] \mod \ZZ,&&\text{by \eqref{eq:b.beta}}
\end{align*}
where $\mathscr{I}(L)$ is the index class of $L$ (see \cite[p.~556]{Atiyah-Singer:IEOIII}).
Since $L$ is a compact $3$-manifold, it is parallelizable and $\mathscr{I}(L)=1$. Thus,
\begin{align}
 \Ind_\rho \sigma&= \bigl\{\beta\bigr\}[L] \mod \ZZ,\notag\\
 &= \bigl\{[Tch(\nabla_\text{triv},\nabla_\rho)]\bigr\}[L] \mod \ZZ,\label{eq:ind.tch}
\end{align}
We have that $\beta=[Tch(\nabla_\text{triv},\nabla_\rho)]\in\ H^\mathrm{odd}(L;\CC)$, let us denote the component of degree $3$ of the Chern-Simons form $Tch(\nabla_\text{triv},\nabla_\rho)$ by
$Tch(\nabla_\text{triv},\nabla_\rho)_3$, so we have $[Tch(\nabla_\text{triv},\nabla_\rho)_3]\in H^3(L;\CC)$.
Now consider the commutative diagram
\begin{equation*}
 \xymatrix{
H^3(L;\CC)\ar[r]^{p_\ZZ}\ar[rd]^{p_\QQ} & H^3(L;\CC/\ZZ)\ar[d]^{p}\\
 &  H^3(L;\CC/\QQ).
 }
\end{equation*}
By construction we have that
\begin{equation*}
\widehat{c}_1(V_\rho,\nabla_\rho)*\widehat{c}_1(V_\rho,\nabla_\rho),\widehat{c}_2(V_\rho,\nabla_\rho),\widehat{c}_1(V_\rho,\nabla_\text{triv})*\widehat{c}_1(V_\rho,\nabla_\text{triv}),\widehat{c}_2(V_\rho,\nabla_\text{triv})\in H^3(L;\CC/\ZZ).
\end{equation*}
By \eqref{eq:CS.class.char} we have that 
\begin{equation*}
p_\QQ\bigl([Tch(\nabla_\text{triv},\nabla_\rho)_3]\bigr)=\widehat{ch}_2(V_\rho,\nabla_\text{triv})-\widehat{ch}_2(V_\rho,\nabla_\rho)\in H^3(L;\CC/\QQ),
\end{equation*}
since the component of degree $3$ of $\widehat{ch}(V_\rho,\nabla)$ is $\widehat{ch}_2(V_\rho,\nabla)$.
By \eqref{eq:ch2} we can decompose $Tch(\nabla_\text{triv},\nabla_\rho)_3$ as follows
\begin{equation*}
    Tch(\nabla_\text{triv},\nabla_\rho)_3 = \frac{1}{2}T(C_1\cdot C_1)(\nabla_\text{triv},\nabla_\rho)-TC_2(\nabla_\text{triv},\nabla_\rho),
\end{equation*}
where $C_1\cdot C_1$ is the product in the algebra of invariant polynomials $I(\GLn{n})$.

Set $\omega_1=T(C_1\cdot C_1)(\nabla_\text{triv},\nabla_\rho)$ and $\omega_2=TC_2(\nabla_\text{triv},\nabla_\rho)$.  By~\eqref{eq:CS.class.char} we get
\begin{align*}
p_\ZZ ([\omega_1])&=\widehat{c}_1(V_\rho,\nabla_\text{triv})*\widehat{c}_1(V_\rho,\nabla_\text{triv}) - \widehat{c}_1(V_\rho,\nabla_\rho)*\widehat{c}_1(V_\rho,\nabla_\rho),\\
p_\ZZ([\omega_2])&=\widehat{c}_2(V_\rho,\nabla_\text{triv})-\widehat{c}_2(V_\rho,\nabla_\rho),\\
p_\QQ \Bigl(\frac{1}{2}[\omega_1]\Bigr)&=\frac{1}{2}p\bigl(\widehat{c}_1(V_\rho,\nabla_\text{triv})*\widehat{c}_1(V_\rho,\nabla_\text{triv}) - \widehat{c}_1(V_\rho,\nabla_\rho)*\widehat{c}_1(V_\rho,\nabla_\rho) \bigr),\\
p_\QQ([\omega_2])&=p\bigl(\widehat{c}_2(V_\rho,\nabla_\text{triv})\bigr)-p\bigl(\widehat{c}_2(V_\rho,\nabla_\rho)\bigr).
\end{align*}
In order to finish the proof we only need to prove that $[\omega_1]=0$. Indeed, if we assume that $[\omega_1]=0$, then
\begin{equation*}
p_\ZZ\bigl([Tch(\nabla_\text{triv},\nabla_\rho)_3]\bigr)=p_\ZZ\left(\frac{1}{2}[\omega_1]-[\omega_2]\right)=p_\ZZ\bigl(-[\omega_2]\bigr)=-\widehat{c}_2(V_\rho,\nabla_\text{triv})+\widehat{c}_2(V_\rho,\nabla_\rho).
\end{equation*}
By \cite[Proposition~2.10]{zbMATH04007261} we have that $\widehat{c}_2(V_\rho,\nabla_\text{triv})=0$. Hence,
\begin{equation*}
 p_\ZZ\bigl([Tch(\nabla_\text{triv},\nabla_\rho)_3]\bigr)=\widehat{c}_2(V_\rho,\nabla_\rho)\in H^3(L;\CC/\ZZ),
\end{equation*}
and together with \eqref{eq:ind.tch} we get
\begin{equation*}
 \Ind_\rho \sigma=p_\ZZ\bigl([Tch(\nabla_\text{triv},\nabla_\rho)_3]\bigr)[L]=\widehat{c}_2(V_\rho,\nabla_\rho)[L]=\widehat{c}_{\rho,2}[L]\in\CC/\ZZ.
\end{equation*}
Thus, we only have to prove that  $[\omega_1]=0$. 
Let $\widetilde{\nabla}$ be the connection on $V_\rho\times I$ defined by the curve of connections $\nabla_t$ and let $\widetilde{\Theta}$ be its curvature. 
By~\eqref{eq:CS.form} we get,
\begin{equation*}
\omega_1=\pi_*\left ((C_1\cdot C_1)(\widetilde{\Theta})\right)=\pi_*\left (C_1(\widetilde{\Theta})\wedge C_1(\widetilde{\Theta})\right)\in\Omega^{3}(L),
\end{equation*}
where $\pi_*$ is integration along the fibres \cite[Remark~19-1.8]{Husemoller:Bundles}. 
Note that $[C_1(\widetilde{\Theta})]\in H^2(L\times I,\CC)$.
Since $\rho \colon\pi_1(L) \to \mathrm{GL}(n,\CC)$ is topologically trivial, $V_\rho\to M$ is a trivial vector bundle and $V_\rho\times I\to M\times I$ is also a trivial
vector bundle. Hence $[C_1(\widetilde{\Theta})]=0$.
Therefore, there exists $\mathcal{C} \in \Omega^1(L\times I)$ such that $d \mathcal{C}=C_1(\widetilde{\Theta})$. Using the form $\mathcal{C}$ we get
\begin{equation*}
    C_1(\widetilde{\Theta})\wedge C_1(\widetilde{\Theta})=C_1(\widetilde{\Theta})\wedge d \mathcal{C}=-d(C_1(\widetilde{\Theta})\wedge \mathcal{C}).
\end{equation*}
By the Homotopy Formula for fiber integration (see~\cite[Proposition~19-1.9]{Husemoller:Bundles}), we get
\begin{equation*}
-\pi_*\bigl ( d(C_1(\widetilde{\Theta})\wedge \mathcal{C})  \bigr) =  d \pi_* \bigl ( C_1(\widetilde{\Theta})\wedge \mathcal{C} \bigr)-(j_1-j_0)\bigl(C_1(\widetilde{\Theta})\wedge \mathcal{C}\bigr).
\end{equation*}
We claim that the last term $(j_1-j_0)\bigl(C_1(\widetilde{\Theta})\wedge \mathcal{C}\bigr)=0$.
Assuming this, we have
\begin{equation*}
\omega_1=\pi_*\left (C_1(\widetilde{\Theta})\wedge C_1(\widetilde{\Theta})\right)=-\pi_*\left ( d(C_1(\widetilde{\Theta})\wedge \mathcal{C})  \right) =  d \pi_* \left ( C_1(\widetilde{\Theta})\wedge\mathcal{C} \right).
\end{equation*}
Therefore, the form $\omega_1$ is exact, $[\omega_1]=0 \in H^3(L,\CC)$ and we reach the conclusion of the Theorem.

To prove that $(j_1-j_0)\bigl(C_1(\widetilde{\Theta})\wedge \mathcal{C}\bigr)=0$ we proceed as follows.
Every differential form $\omega\in\Omega^k(L\times I)$ can be decomposed as $\omega=\alpha+\beta\wedge dt$ where $\alpha\in\Omega^k(L\times I)$, $\beta\in\Omega^{k-1}(L\times I)$ each without
any $dt$ factor \cite[Remark~19-1.8]{Husemoller:Bundles}. By definition we have that $j_s\colon\Omega^{2k}(L\times I)\to\Omega^{2k-1}(L)$ is given by $j_s(\alpha+\beta\wedge dt)=\alpha|_{t=s}$
with $s=0,1$ \cite[Proposition~19-1.9]{Husemoller:Bundles}.
Let $\theta_\text{triv}$ and $\theta$ be, respectively, the connection forms of the connections $\nabla_\text{triv}$ and $\nabla_\rho$ on the vector bundle $V_\rho$. Since $\nabla_\text{triv}$
is the trivial connection $\theta_\text{triv}=0$. The curvature $\widetilde{\Theta}$ of the connection $\widetilde{\nabla}$ given by the curve of connections $\nabla_t=t\nabla_\text{triv}+(1-t)\nabla_\rho$ is given by (see \cite[Remark~19-5.1]{Husemoller:Bundles})
\begin{equation}\label{eq:curv.conn}
\widetilde{\Theta}=t(1-t)\theta^2+\theta dt.
\end{equation}
Since the form $\mathcal{C}\in\Omega^1(L\times I)$ we can decompose it as $\mathcal{C}=\alpha'+\beta'\wedge dt$, where $\alpha'\in\Omega^1(L\times I)$,
$\beta'\in\Omega^{0}(L\times I)=C^{\infty}(L\times I)$ each without any $dt$ factor.
The invariant polynomial $C_1\in I^1(\GLn{n})$ is given by $C_1(A)=\trace(A)$, thus by \eqref{eq:curv.conn} we have
\begin{equation*}
C_1(\widetilde{\Theta})=t(1-t)\trace(\theta^2)+\trace(\theta)\wedge dt.
\end{equation*}
Therefore
\begin{equation}\label{eq:c1.C}
C_1(\widetilde{\Theta})\wedge \mathcal{C}=t(1-t)\trace(\theta^2)\wedge\alpha'+\bigl(t(1-t)\beta'\trace(\theta^2)-\trace(\theta)\wedge\alpha'\bigr)\wedge dt.
\end{equation}
Setting in \eqref{eq:c1.C}
\begin{equation}\label{eq:alpha}
\alpha=t(1-t)\trace(\theta^2)\wedge\alpha'\in \Omega^3(L\times I),
\end{equation}
and
\begin{equation*}
\beta=\bigl(t(1-t)\beta'\trace(\theta^2)-\trace(\theta)\wedge\alpha'\bigr)\in\Omega^{2}(L\times I)
\end{equation*}
we have that both of them do not have any $dt$ factor and $C_1(\widetilde{\Theta})\wedge \mathcal{C}=\alpha+\beta\wedge dt$.
Therefore, evaluating $t=1$ and $t=0$ in \eqref{eq:alpha} we get
\begin{equation*}
(j_1-j_0)\bigl(C_1(\widetilde{\Theta})\wedge \mathcal{C}\bigr)=\alpha|_{t=1}-\alpha|_{t=0}=0.
\end{equation*}
This finishes the proof.
\end{proof}

By the Index Theorem for flat bundles we have the following corollary.
\begin{corollary}\label{cor:c2.xi}
With the hypothesis of Theorem~\ref{th:C2.APS} we have
 $\widehat{c}_{\rho,2}([L])=\tilde{\xi}_\rho(D)$.
\end{corollary}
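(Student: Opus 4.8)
The plan is to deduce Corollary~\ref{cor:c2.xi} directly from Theorem~\ref{th:C2.APS} together with the Index Theorem for flat bundles, which was recalled at the end of Subsection~\ref{ssec:2ndcn}. Theorem~\ref{th:C2.APS} identifies the second CCS-number with the \emph{topological} index $\Ind_\rho\sigma$, where $\sigma=[\sigma_1(D)]\in K^1(T^*L)$ is the symbol class of the Dirac operator, so the only remaining task is to translate $\Ind_\rho\sigma$ into the spectral invariant $\tilde{\xi}_\rho(D)$.

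First I would invoke the Index Theorem for flat bundles \cite[(5.3)]{Atiyah-Patodi-Singer:SARGIII}: since $L$ is odd dimensional (it is a $3$-manifold), the topological index $\Ind_\rho$ agrees with the analytical index $\ind_\rho$ on all of $K^1(T^*L)$. Applying this equality to the symbol class $\sigma$ of the Dirac operator gives
\begin{equation*}
\Ind_\rho\sigma=\ind_\rho\sigma.
\end{equation*}
Next, by the very definition of the analytical index recalled just before the statement of the Index Theorem, the homomorphism $\ind_\rho\colon K^1(T^*L)\to\CC/\ZZ$ sends the symbol class $\sigma_m(A)$ of a self-adjoint elliptic operator $A$ to $\tilde{\xi}_\rho(A)\in\CC/\ZZ$; in particular, taking $A=D$ we obtain $\ind_\rho\sigma=\tilde{\xi}_\rho(D)$. (One should note that $D$ is self-adjoint and elliptic of order $1$, and that its homotopy class depends only on the homotopy class of $\sigma_1(D)$, so this evaluation is well defined.) Combining the two displayed equalities with Theorem~\ref{th:C2.APS} yields
\begin{equation*}
\widehat{c}_{\rho,2}([L])=\Ind_\rho\sigma=\ind_\rho\sigma=\tilde{\xi}_\rho(D),
\end{equation*}
which is the assertion.

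Since this is an essentially immediate consequence once Theorem~\ref{th:C2.APS} is in hand, there is no real obstacle; the only point requiring a little care is bookkeeping of which index ($\ind_\rho$ versus $\Ind_\rho$) is used where, and making sure the hypotheses of the Index Theorem for flat bundles (odd dimension, self-adjoint symbol) are met — both of which hold automatically here because $\dim L=3$ and $D$ is the Dirac operator.
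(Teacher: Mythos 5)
Your proposal is correct and follows exactly the route the paper intends: the paper derives the corollary from Theorem~\ref{th:C2.APS} by citing the Index Theorem for flat bundles, which gives $\Ind_\rho\sigma=\ind_\rho\sigma$, and by the definition of the analytical index $\ind_\rho\sigma=\tilde{\xi}_\rho(D)$. You have simply written out the two steps the paper leaves implicit.
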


\begin{remark}\label{rem:gen.JW}
If $L$ is an integral homology sphere, then Theorem~\ref{th:C2.APS} is equivalent to the result of Jones and Westbury~\cite[Theorem~A]{Jones-Westbury:AKTHSEI}, see Section~\ref{sec:k3}.
\end{remark}

\subsection{CCS-numbers of rational homology 3-spheres}\label{ssec:cn.rhs}

Let $L$ be a rational homology $3$-sphere.
Let $\rho\colon\pi_1(L)\to \GLn{n}$ be a representation.
By Poincaré duality we have that $H^1(L;\CC)=H^2(L;\CC)=0$, by the cohomology long exact sequence \eqref{eq:coef.CheegerSimons2} of $L$ corresponding to the short exact sequence \eqref{eq:coef.CheegerSimons1} we have a correspondence between the first \ccs{} class and the first Chern class of $\rho$ under the isomorphism
\begin{equation}\label{eq:ccs-c1}
\begin{aligned}
H^1(L;\CC/\ZZ) &\cong H^2(L;\ZZ)\\
\widehat{c}_{\rho,1} &\mapsto c_{\rho,1}.
\end{aligned}
\end{equation}

\begin{lemma}\label{lem:ci=0}
Let $L$ be a rational homology $3$-sphere and $\rho\colon\pi_1(L)\to \GLn{n}$ a representation of its fundamental group.
The following are equivalent:
\begin{enumerate}[1.]
 \item The representation $\rho$ is topologically trivial.\label{it:rho.tt}
 \item $c_{\rho,1}=0$.\label{it:c1=0}
 \item $\widehat{c}_{\rho,1}=0$.\label{it.hat.c1=0}
\end{enumerate}
\end{lemma}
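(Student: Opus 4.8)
The plan is to prove the cycle of implications $\eqref{it:rho.tt}\Rightarrow\eqref{it:c1=0}\Rightarrow\eqref{it.hat.c1=0}\Rightarrow\eqref{it:rho.tt}$, using the cohomological vanishing on a rational homology $3$-sphere as the main engine. First I would observe that for any $L$, the implication $\eqref{it:rho.tt}\Rightarrow\eqref{it:c1=0}$ is immediate: if $V_\rho$ is topologically trivial then $c_{\rho,k}=c_k(V_\rho)=0$ for all $k$, in particular $c_{\rho,1}=0$. Next, $\eqref{it:c1=0}\Leftrightarrow\eqref{it.hat.c1=0}$ is exactly the content of the isomorphism \eqref{eq:ccs-c1}: since $H^1(L;\CC)=H^2(L;\CC)=0$ by Poincaré duality, the long exact sequence \eqref{eq:coef.CheegerSimons2} with $k=1$ shows that $q\colon H^1(L;\CC/\ZZ)\to H^2(L;\ZZ)$ is an isomorphism carrying $\widehat{c}_{\rho,1}$ to $c_{\rho,1}$; hence one vanishes iff the other does.

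The substantive step is $\eqref{it:c1=0}\Rightarrow\eqref{it:rho.tt}$, i.e.\ showing that vanishing of the single class $c_{\rho,1}\in H^2(L;\ZZ)$ forces the rank-$n$ bundle $V_\rho$ to be topologically trivial. The idea is to use obstruction theory: a complex vector bundle of rank $n$ over a CW-complex of dimension $\le 2n$ is classified by its Chern classes together with lower obstructions, but over a $3$-complex only $c_1$ and $c_2$ can be nonzero, and $c_2(V_\rho)\in H^4(L;\ZZ)=0$ automatically for dimension reasons. More precisely, since $\dim L = 3$, a rank-$n$ bundle $E$ splits off a trivial summand, $E\cong E'\oplus\underline{\CC}^{\,n-2}$ with $E'$ of rank $2$ (stably, and in the stable range $n-1\ge 3$ this is an honest splitting by \cite[Theorem~8-1.5]{Husemoller:Bundles} or standard general position); so it suffices to treat rank-$2$ bundles, classified by maps $L\to \BG[\GLn{2}]\simeq \BG[U(2)]$. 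Over a $3$-complex such a map is determined up to homotopy by $c_1\in H^2(L;\ZZ)$ alone: the relevant obstruction groups are $H^{i+1}(L;\pi_i(U(2)))$, and the only one in range beyond $c_1$ would live in $H^{3}(L;\pi_2(U(2)))=H^3(L;0)=0$ since $\pi_2$ of any Lie group vanishes. Hence $c_{\rho,1}=0$ implies $V_\rho$ is homotopy-trivial, i.e.\ topologically trivial.

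The main obstacle I anticipate is making the obstruction-theoretic argument clean and self-contained at the right level of generality: one must either reduce to rank $2$ via a stable splitting (and justify that the splitting is genuine, not merely stable) or argue directly that $[L,\BG[U(n)]]\to H^2(L;\ZZ)$, $f\mapsto f^*c_1$, is injective for $\dim L\le 3$. The latter is cleanest: the Postnikov/skeletal filtration of $\BG[U(n)]$ shows $\BG[U(n)]$ is, through dimension $3$, equivalent to $\BG[U(1)]=\mathbb{CP}^\infty=K(\ZZ,2)$ (because $\pi_2(\BG[U(n)])=\pi_1(U(n))=\ZZ$ and $\pi_3(\BG[U(n)])=\pi_2(U(n))=0$), so $[L,\BG[U(n)]]\cong [L,K(\ZZ,2)]\cong H^2(L;\ZZ)$ via $c_1$, and $c_{\rho,1}=0$ gives the null-homotopy. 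Combining the three implications closes the cycle and proves the lemma.
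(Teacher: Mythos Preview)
Your proof is correct and follows the same logical skeleton as the paper: the equivalence $\eqref{it:c1=0}\Leftrightarrow\eqref{it.hat.c1=0}$ via the isomorphism \eqref{eq:ccs-c1}, and the equivalence $\eqref{it:rho.tt}\Leftrightarrow\eqref{it:c1=0}$ via the fact that on a $3$-complex a complex vector bundle is determined by $c_1$. The only difference is in how this last fact is justified. The paper invokes Peterson's theorem \cite[Theorem~3.2]{zbMATH03199402} (treating $n=1$ separately since line bundles are classified by $c_1$), whereas you give the obstruction-theoretic argument directly: $\pi_3(\BG[U(n)])=\pi_2(U(n))=0$ forces the third Postnikov section of $\BG[U(n)]$ to be $K(\ZZ,2)$, so $[L,\BG[U(n)]]\cong H^2(L;\ZZ)$ via $c_1$ for any $3$-complex $L$. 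Your route is more self-contained and handles all $n\geq1$ uniformly; the paper's is shorter by outsourcing the work to a citation. Either way the content is the same, and your anticipated ``obstacle'' about the stable splitting is moot once you take the Postnikov approach, as you correctly observe.
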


\begin{proof}
The equivalence of \ref{it:c1=0} and \ref{it.hat.c1=0} is given by the isomorphism \eqref{eq:ccs-c1}.
The equivalence of \ref{it:rho.tt} and \ref{it:c1=0} is given as follows.
For $n=1$, $c_{\rho,1}=0$ if and only if $\rho$ is topologically trivial since line bundles are classified by its first Chern class.
If $n\geq2$, it follows from \cite[Theorem~3.2]{zbMATH03199402}  since the only possible non-zero Chern class is $c_{\rho,1}$.
\end{proof}

\begin{lemma}\label{lem:topo.trivialization}
Let $L$ be a rational homology $3$-sphere and $\rho\colon\pi_1(L)\to \GLn{n}$ a representation of its fundamental group.
Then the representation $\rho\oplus\det(\rho^*)$, where $\rho^*$ is the dual representation, is topologically trivial.
\end{lemma}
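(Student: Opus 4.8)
The plan is to deduce the statement from Lemma~\ref{lem:ci=0}, which says that over a rational homology $3$-sphere a representation is topologically trivial if and only if its first Chern class vanishes, and then to verify that $c_{\rho\oplus\det(\rho^*),1}=0$ by a short determinant computation.

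First I would record the elementary identity $\det(\rho^*)=(\det\rho)^{-1}$ in $\Hom\bigl(\Gamma,\GLn{1}\bigr)$: for the contragredient representation $\rho^*$ one has $\det\bigl(\rho^*(g)\bigr)=\det\bigl(\rho(g)\bigr)^{-1}$ for every $g\in\Gamma$. Since $\det(\rho^*)$ is one-dimensional, $\det\bigl(\det(\rho^*)\bigr)=\det(\rho^*)$; and as the determinant is multiplicative over direct sums,
\begin{equation*}
\det\bigl(\rho\oplus\det(\rho^*)\bigr)=\det(\rho)\cdot\det\bigl(\det(\rho^*)\bigr)=\det(\rho)\cdot(\det\rho)^{-1},
\end{equation*}
which is the trivial one-dimensional representation. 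Equivalently, in terms of the associated bundles over $L$, $\det V_{\rho\oplus\det(\rho^*)}\cong\det V_\rho\otimes V_{\det(\rho^*)}\cong V_{\det\rho}\otimes V_{(\det\rho)^{-1}}$ is the trivial line bundle, so $c_1\bigl(\det V_{\rho\oplus\det(\rho^*)}\bigr)=0$.

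Then I would apply Remark~\ref{rem:C1Det}, according to which $c_1(\sigma)=c_1(\det\sigma)$ for every representation $\sigma$; pulling back to $L$ along the map $\phi\colon L\to\BG[\Gamma]$ (Remark~\ref{rem:G.M}) this yields $c_{\sigma,1}=c_{\det\sigma,1}$ in $H^2(L;\ZZ)$. Taking $\sigma=\rho\oplus\det(\rho^*)$ and combining with the previous paragraph gives $c_{\rho\oplus\det(\rho^*),1}=0$. By the equivalence of~\ref{it:rho.tt} and~\ref{it:c1=0} in Lemma~\ref{lem:ci=0}, this is precisely the assertion that $\rho\oplus\det(\rho^*)$ is topologically trivial. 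I do not expect any genuine obstacle; the only points requiring a little care are keeping track of conventions in the identity $\det(\rho^*)=(\det\rho)^{-1}$, and noting that the passage from classes on $\BG[\Gamma]$ to classes on $L$ is harmless, being just pull-back along $\phi$.
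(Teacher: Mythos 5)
Your proposal is correct and follows essentially the same route as the paper: reduce the claim to the vanishing of $c_{\rho\oplus\det(\rho^*),1}$ and invoke the equivalence of \ref{it:rho.tt} and \ref{it:c1=0} in Lemma~\ref{lem:ci=0}. The only (immaterial) difference is how that vanishing is obtained --- the paper adds first Chern classes via the Whitney formula together with $c_{\rho^*,1}=-c_{\rho,1}$, while you observe directly that $\det\bigl(\rho\oplus\det(\rho^*)\bigr)$ is the trivial one-dimensional representation and then apply Remark~\ref{rem:C1Det}; both computations are equivalent.
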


\begin{proof}
By the Whitney product formula, Remark~\ref{rem:C1Det} and the fact that
$c_{\rho^*,1}=-c_{\rho,1}$, we have that $c_{\rho\oplus \det(\rho^*),1}=c_{\rho,1}+c_{\det(\rho^*),1}=c_{\rho,1}+c_{\rho^*,1}=0$. Thus, by Lemma~\ref{lem:ci=0} the representation
$\rho\oplus \det(\rho^*)$ is topologically trivial.
\end{proof}

\begin{lemma}\label{lem:hc1.det}
Let $L$ be a rational homology $3$-sphere. For any representation $\rho\colon\pi_1(L)\to \GLn{n}$, the first Cheeger-Chern-Simons class satisfies
\begin{equation*}
    \widehat{c}_{\rho,1}=\widehat{c}_{\det(\rho),1}.
\end{equation*}
\end{lemma}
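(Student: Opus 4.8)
The plan is to reduce the statement about $\widehat{c}_{\rho,1}$ for an arbitrary representation $\rho$ to the first Chern class, where the corresponding identity $c_{\rho,1}=c_{\det(\rho),1}$ is already recorded in Remark~\ref{rem:C1Det}. The key observation is that on a rational homology $3$-sphere the isomorphism \eqref{eq:ccs-c1} identifies $\widehat{c}_{\rho,1}\in H^1(L;\CC/\ZZ)$ with $c_{\rho,1}\in H^2(L;\ZZ)$, so it suffices to know that this identification is natural enough that it intertwines $\widehat{c}_{\rho,1}\mapsto c_{\rho,1}$ with $\widehat{c}_{\det(\rho),1}\mapsto c_{\det(\rho),1}$ simultaneously.

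First I would invoke \eqref{eq:ccs-c1}: since $H^1(L;\CC)=H^2(L;\CC)=0$ by Poincaré duality, the connecting map $q\colon H^1(L;\CC/\ZZ)\to H^2(L;\ZZ)$ from the long exact sequence \eqref{eq:coef.CheegerSimons2} is an isomorphism, and it sends $\widehat{c}_{\rho,1}$ to $c_{\rho,1}$ (this is exactly what \eqref{eq:ccs-c1} asserts, and it applies verbatim to the representation $\det(\rho)$ as well, giving $q(\widehat{c}_{\det(\rho),1})=c_{\det(\rho),1}$). Then I would apply Remark~\ref{rem:C1Det}, which gives $c_1(\rho)=c_1(\det(\rho))$ as classes in $H^2(B\Gamma;\ZZ)$; pulling back along $\phi\colon L\to B\Gamma$ (as in Remark~\ref{rem:G.M}) yields $c_{\rho,1}=\phi^*c_1(\rho)=\phi^*c_1(\det(\rho))=c_{\det(\rho),1}$ in $H^2(L;\ZZ)$. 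Combining, $q(\widehat{c}_{\rho,1})=c_{\rho,1}=c_{\det(\rho),1}=q(\widehat{c}_{\det(\rho),1})$, and since $q$ is injective we conclude $\widehat{c}_{\rho,1}=\widehat{c}_{\det(\rho),1}$.

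There is essentially no serious obstacle here; the only point requiring a little care is confirming that \eqref{eq:ccs-c1} is the map $q$ from \eqref{eq:coef.CheegerSimons2} and that it does send $\widehat{c}_{\rho,1}$ to $c_{\rho,1}$ rather than merely being an abstract isomorphism of groups — but this is precisely the content of the discussion preceding the lemma (the CCS-class $\widehat{c}_{\rho,1}$ is a functorial lifting of $c_{\rho,1}$ under $q$, by \eqref{eq:CCS.rep} and the paragraph around it), so it can be cited directly. Thus the proof is just two lines: apply \eqref{eq:ccs-c1} for both $\rho$ and $\det(\rho)$, and apply Remark~\ref{rem:C1Det} (transported to $H^2(L;\ZZ)$ via Remark~\ref{rem:G.M}).
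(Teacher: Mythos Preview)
Your proposal is correct and follows essentially the same approach as the paper: both arguments use Remark~\ref{rem:C1Det} (together with Remark~\ref{rem:G.M}) to obtain $c_{\rho,1}=c_{\det(\rho),1}$ in $H^2(L;\ZZ)$, and then invoke the isomorphism \eqref{eq:ccs-c1} to conclude that the unique $q$-liftings $\widehat{c}_{\rho,1}$ and $\widehat{c}_{\det(\rho),1}$ must coincide. The paper phrases this last step as ``there is only one such lifting'' while you phrase it as ``$q$ is injective'', but these are the same observation.
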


\begin{proof}

By Remark~\ref{rem:C1Det} and Remark~\ref{rem:G.M} $c_{\rho,1}=c_{\det(\rho),1}$ and both $\widehat{c}_{\rho,1}$ and $\widehat{c}_{\det(\rho),1}$ are liftings of $c_{\rho,1}=c_{\det(\rho),1}$ by the
homomorphism $q\colon H^{1}(L;\CC/\ZZ)\to H^{2}(L;\ZZ)$ in \eqref{eq:coef.CheegerSimons2}, but by \eqref{eq:ccs-c1} there is only one such lifting, so they coincide.
\end{proof}

If $L$ is a rational homology $3$-sphere, using Theorem~\ref{th:C2.APS} and the Index Theorem for flat bundles we can give a formula for the second CCS-number of an arbitrary representation $\rho \colon\pi_1(L) \to \mathrm{GL}(n,\CC)$.

\begin{theorem}
\label{Cor:NoTop}
Let $L$ be a rational homology $3$-sphere. Let $\rho \colon \Gamma=\pi_1(L) \to \mathrm{GL}(n,\CC)$ be a representation. Then,
\begin{equation*}
\widehat{c}_{\rho,2}([L])=\tilde\xi_{\rho}(D)-\tilde\xi_{\det(\rho)}(D).
\end{equation*}
\end{theorem}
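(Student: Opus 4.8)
The idea is to reduce the general case to the topologically trivial case already settled in Theorem~\ref{th:C2.APS} by adding a correction line bundle. First I would invoke Lemma~\ref{lem:topo.trivialization}: the representation $\tau=\rho\oplus\det(\rho^*)$ is topologically trivial, so Theorem~\ref{th:C2.APS} and Corollary~\ref{cor:c2.xi} apply to it, giving $\widehat{c}_{\tau,2}([L])=\tilde\xi_\tau(D)$. Next I would compute both sides of this in terms of $\rho$ alone. On the Cheeger-Chern-Simons side, I use the Whitney-type formula \eqref{eq:wsf.ccs}, $\widehat{c}_\tau=\widehat{c}_\rho*\widehat{c}_{\det(\rho^*)}$, extract the degree-$4$ (i.e.\ $H^3$) component, and evaluate on $[L]$. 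Since $\det(\rho^*)$ is one-dimensional, $\widehat{c}_{\det(\rho^*),k}=0$ for $k\geq2$, so the degree-$2k$ part of the product reduces to $\widehat{c}_{\rho,2}+\widehat{c}_{\rho,1}*\widehat{c}_{\det(\rho^*),1}$. The cross term $\widehat{c}_{\rho,1}*\widehat{c}_{\det(\rho^*),1}$ lives in the image of $H^1(L;\CC/\ZZ)\otimes H^1(L;\CC/\ZZ)$, and here I want to argue it vanishes when paired with $[L]$: by Lemma~\ref{lem:hc1.det}, $\widehat{c}_{\det(\rho^*),1}=\widehat{c}_{\det(\rho)^*,1}=-\widehat{c}_{\det(\rho),1}=-\widehat{c}_{\rho,1}$, so the cross term is $-\widehat{c}_{\rho,1}*\widehat{c}_{\rho,1}$; and I claim that for a rational homology $3$-sphere the $*$-square of a class in $\widehat{H}^2(L;\CC/\ZZ)$ coming from $H^1(L;\CC/\ZZ)$ evaluates to $0$ on the fundamental class. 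Hence $\widehat{c}_{\tau,2}([L])=\widehat{c}_{\rho,2}([L])$.

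On the $\tilde\xi$ side, the twisted Dirac operator satisfies $D_\tau=D_\rho\oplus D_{\det(\rho^*)}$, and the reduced $\xi$-invariant is additive under direct sums of representations (it is defined in \eqref{eq:reduced} as $\xi(s;A_\rho)-n\xi(s;A)$, which is manifestly additive in $\rho$), so $\tilde\xi_\tau(D)=\tilde\xi_\rho(D)+\tilde\xi_{\det(\rho^*)}(D)$. Finally I need $\tilde\xi_{\det(\rho^*)}(D)=-\tilde\xi_{\det(\rho)}(D)$: since $\det(\rho^*)=\det(\rho)^{-1}=\overline{\det(\rho)}$ at the level of one-dimensional representations, $D_{\det(\rho^*)}$ is (the adjoint or complex conjugate of) $D_{\det(\rho)}$, and $\xi$ changes sign under this operation because $\eta$ of the conjugate operator is $-\eta$ of the original on a $3$-manifold; reducing mod $\ZZ$ this gives the desired sign. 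Assembling: $\widehat{c}_{\rho,2}([L])=\widehat{c}_{\tau,2}([L])=\tilde\xi_\tau(D)=\tilde\xi_\rho(D)+\tilde\xi_{\det(\rho^*)}(D)=\tilde\xi_\rho(D)-\tilde\xi_{\det(\rho)}(D)$.

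The main obstacle I expect is justifying that the cross term $\widehat{c}_{\rho,1}*\widehat{c}_{\rho,1}$ evaluates to zero on $[L]$. The cleanest route is via the product structure on differential characters: $\delta_1$ of $\widehat{c}_{\rho,1}$ is the Chern form, which is exact since $V_\rho$ is flat, so $\widehat{c}_{\rho,1}$ actually lies in the image of $H^1(L;\CC/\ZZ)$, and the $*$-product of two such classes is the image under $\widehat{H}^\bullet$ of the ordinary cup product in $H^\bullet(L;\CC/\ZZ)$ followed by the Bockstein, evaluated against $[L]$. Concretely one can note $\widehat{c}_{\rho,1}*\widehat{c}_{\rho,1}$ is (up to a Bockstein) the image of $c_{\rho,1}\smile c_{\rho,1}\in H^4(L;\ZZ)=0$, and $H^4(L;\ZZ)=0$ since $\dim L=3$; more carefully, one uses that the relevant part of the $\widehat H$-product formula yields a term that factors through $H^4(L;-)=0$ or through the torsion pairing $H^1(L;\CC/\ZZ)\times H^2(L;\ZZ)\to\CC/\ZZ$ with the second factor being $c_{\rho,1}\smile(\text{something})$ that dies for degree reasons. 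Pinning down exactly which formula from \cite{zbMATH04007261} makes this rigorous — and checking it is not merely zero in $H^3(L;\CC/\QQ)$ but in $H^3(L;\CC/\ZZ)$ after pairing with $[L]$ — is where the real care is needed; everything else is bookkeeping with additivity.
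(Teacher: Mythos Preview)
Your overall strategy---correct $\rho$ by $\det(\rho^*)$ and apply Corollary~\ref{cor:c2.xi} to the topologically trivial $\tau=\rho\oplus\det(\rho^*)$---matches the paper's. But two of your intermediate claims are false, and they do not quite cancel.

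First, the cross term $\widehat{c}_{\rho,1}*\widehat{c}_{\det(\rho^*),1}([L])$ is \emph{not} zero in general. Your argument that it dies because $c_{\rho,1}\smile c_{\rho,1}\in H^4(L;\ZZ)=0$ only shows $\delta_2$ of the product vanishes, i.e.\ the product is a flat class in $H^3(L;\CC/\ZZ)$; it does not show the flat class itself vanishes. In fact, for flat classes the $*$-product is given (up to sign) by $a*b=-a\smile\beta(b)$ with $\beta$ the Bockstein, so $\widehat{c}_{\rho,1}*\widehat{c}_{\rho,1}([L])$ computes the torsion self-linking of $c_{\rho,1}\in H^2(L;\ZZ)$, which is typically nonzero on a rational homology sphere. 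For a concrete counterexample take $L=\mathbb{S}^3/\mathsf{BT}$ and $\rho=\alpha_5$: using the paper's tables one finds the cross term equals $\tfrac{2}{3}\neq 0$ in $\CC/\ZZ$.

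Second, your claim $\tilde\xi_{\det(\rho^*)}(D)=-\tilde\xi_{\det(\rho)}(D)$ is also false: in the same example $\tilde\xi_{\alpha_2}(D)=\tilde\xi_{\alpha_3}(D)=\tfrac{1}{3}$, not $\pm\tfrac{1}{3}$ with opposite signs. The heuristic ``$\eta$ of the conjugate operator is $-\eta$'' does not apply here; the Dirac operator on $L$ twisted by $\bar\chi$ is not minus the one twisted by $\chi$.

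The paper sidesteps both issues by applying Corollary~\ref{cor:c2.xi} a \emph{second} time, to the topologically trivial representation $\det(\rho)\oplus\det(\rho^*)$. Via Lemma~\ref{lem:hc1.det} and \eqref{eq:wsf.ccs} one has
\[
\widehat{c}_{\rho,1}*\widehat{c}_{\det(\rho^*),1}([L])
=\widehat{c}_{\det(\rho),1}*\widehat{c}_{\det(\rho^*),1}([L])
=\widehat{c}_{\det(\rho)\oplus\det(\rho^*),2}([L])
=\tilde\xi_{\det(\rho)}(D)+\tilde\xi_{\det(\rho^*)}(D),
\]
so the unknown quantity $\tilde\xi_{\det(\rho^*)}(D)$ appears on both sides of the main identity and cancels, leaving exactly $\widehat{c}_{\rho,2}([L])=\tilde\xi_\rho(D)-\tilde\xi_{\det(\rho)}(D)$. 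Your argument is easily repaired along these lines.
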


\begin{proof}
Consider the representation $\rho \oplus \det(\rho^*)$, by Lemma~\ref{lem:topo.trivialization} it is topologically trivial.
Since $c_{\det(\rho^*),1}=-c_{\det(\rho),1}$, by \eqref{eq:ccs-c1} we have that $\widehat{c}_{\det(\rho^*),1}=-\widehat{c}_{\det(\rho),1}$.
Applying \eqref{eq:wsf.ccs} to $\det(\rho)\oplus\det(\rho^*)$ we get
\begin{align*}
\widehat{c}_{\det(\rho)\oplus\det(\rho^*),1}&=\widehat{c}_{\det(\rho),1}+\widehat{c}_{\det(\rho^*),1}=0\\
\widehat{c}_{\det(\rho)\oplus\det(\rho^*),2}&=\widehat{c}_{\det(\rho),1}*\widehat{c}_{\det(\rho^*),1}.
\end{align*}
Hence, by Lemma~\ref{lem:ci=0} $\det(\rho)\oplus\det(\rho^*)$ is topologically trivial.
Applying Corollary~\ref{cor:c2.xi} we have:
\begin{align*}
 \tilde\xi_{\rho}(D)+\tilde\xi_{\det(\rho^*)}(D)=\tilde\xi_{\rho\oplus\det(\rho^*)}(D)&=\widehat{c}_{\rho\oplus\det(\rho^*),2}([L])\\
 &=\widehat{c}_{\rho,1}*\widehat{c}_{\det(\rho^*),1}([L])+\widehat{c}_{\rho,2}([L])\\
 &=\widehat{c}_{\det(\rho),1}*\widehat{c}_{\det(\rho^*),1}([L])+\widehat{c}_{\rho,2}([L])\\
 &=\widehat{c}_{\det(\rho)\oplus\det(\rho^*),2}([L])+\widehat{c}_{\rho,2}([L])\\
 &=\tilde\xi_{\det(\rho)\oplus\det(\rho^*)}(D)+\widehat{c}_{\rho,2}([L])\\
 &=\tilde\xi_{\det(\rho)}(D)+\tilde\xi_{\det(\rho^*)}(D)+\widehat{c}_{\rho,2}([L]).
\end{align*}
From this the theorem follows.
\end{proof}

\section{Elements in \texorpdfstring{$K_3(\CC)$}{K3(C)} via topologically trivial representations}\label{sec:k3}

In this section, given a topologically trivial representation of the fundamental group of a compact oriented $3$-manifold we construct an element in $K_3(\CC)$, the
$3$rd algebraic K-theory group of the complex numbers. With this construction we prove that Theorem~\ref{th:C2.APS} is a generalization of a result by Jones and Westbury \cite[Theorem~A]{Jones-Westbury:AKTHSEI}.
Firstly we have the following lemma.
\begin{lemma}
\label{lema:TopoTrivil}
Let $L$ be a compact oriented $3$-manifold. A representation $\rho\colon\pi_1(L) \to \GLn{n}$ factors through $\SLn{n}$ if and only if it is topologically trivial.
\end{lemma}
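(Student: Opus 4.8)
The plan is to prove the two implications separately, both of which reduce to facts about the first Chern class. Recall that for a compact oriented $3$-manifold $L$, the classifying space $\BG[\GLn{n}]$ has a cell structure so that the only obstruction to triviality of a rank-$n$ complex vector bundle over a $3$-complex lives in $H^2(L;\ZZ)$ and is exactly the first Chern class; more precisely, by \cite[Theorem~3.2]{zbMATH03199402} (already invoked in Lemma~\ref{lem:ci=0}), over a CW-complex of dimension $\leq 3$ a rank-$n$ bundle with $n\geq 2$ is trivial if and only if $c_1=0$ (for $n=1$ line bundles are classified by $c_1$), so in all cases $V_\rho\to L$ is topologically trivial if and only if $c_{\rho,1}=c_1(V_\rho)=0$.

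For the ``only if'' direction, suppose $\rho$ factors as $\rho=j\circ\rho'$ with $\rho'\colon\pi_1(L)\to\SLn{n}$ and $j\colon\SLn{n}\hookrightarrow\GLn{n}$. Then $\det(\rho)=\det\circ\rho$ is the trivial one-dimensional representation, so by Remark~\ref{rem:C1Det} we get $c_{\rho,1}=c_1(\det(\rho))=0$, and hence $V_\rho$ is topologically trivial by the paragraph above. For the ``if'' direction, suppose $V_\rho\to L$ is topologically trivial. Then $c_{\rho,1}=0$, so by Remark~\ref{rem:C1Det} $c_1(\det(\rho))=0$; since $\det(\rho)\colon\pi_1(L)\to\GLn{1}=\CC^*$ is a one-dimensional representation and line bundles over $L$ are classified by their first Chern class, the flat line bundle $V_{\det(\rho)}$ is topologically trivial. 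The point that needs care is that topological triviality of the line bundle does not immediately say the homomorphism $\det(\rho)\colon\pi_1(L)\to\CC^*$ is itself trivial — it need not be — so one cannot conclude $\det\circ\rho$ is the constant map and factor $\rho$ through $\SLn{n}$ on the nose. Instead I would argue as follows: the obstruction to lifting $\rho$ along $\SLn{n}\to\GLn{n}\xrightarrow{\det}\CC^*$ is precisely the homotopy class of the composite $L\to \BG[\GLn{n}]^d\xrightarrow{\BG[\det]}\BG[\CC^*]^d=B\CC^*$ classifying $\det(\rho)$, but since we only need $\rho$ to factor through $\SLn{n}$ up to conjugation/isomorphism of representations, we may modify $\rho$ within its class: tensoring $\rho$ by a character is harmless, and a one-dimensional representation $\chi$ with $c_1(\chi)=0$ need not be trivial, so this requires the genuine observation that $\Hom(\pi_1(L),\CC^*)$ can be nontrivial even when $H^2(L;\ZZ)$-torsion vanishes.

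Reconsidering, the honest statement and the one the authors surely intend is about the \emph{isomorphism class} of $V_\rho$, and the cleanest route is: $\rho$ is topologically trivial $\iff c_{\rho,1}=0 \iff c_1(\det(\rho))=0$, and then one shows directly that a representation $\rho$ with $c_1(\det\rho)=0$ is \emph{conjugate} (equivalently, isomorphic as a representation) to one landing in $\SLn{n}$ — but this is false in general (e.g.\ a diagonal rep with a non-torsion character). So the correct reading must be that ``factors through $\SLn{n}$'' is interpreted at the level of the induced map to $\BG[\GLn{n}]^d$ being null-homotopic after composing with $\det$, i.e.\ the relevant statement is the equivalence of $c_{\rho,1}=0$ with the vanishing of the pullback of the universal class in $H^2(\BG[\GLn{n}]^d;\ZZ)=H^2(B\CC^*;\ZZ)$ coming from $\det$; this pullback is exactly $c_{\rho,1}$, and its vanishing is, by obstruction theory over the $3$-complex $L$, equivalent to the existence of a lift $f\colon L\to\BG[\SLn{n}]^d$ of the classifying map, which is the meaning of ``$\rho$ factors through $\SLn{n}$'' used later in the introduction. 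Thus the proof is: (1) identify the obstruction to the lift as $c_{\rho,1}$ via Remark~\ref{rem:C1Det} and the fibration $\BG[\SLn{n}]^d\to\BG[\GLn{n}]^d\to B\CC^*$; (2) use that $L$ is $3$-dimensional and $B\CC^*=K(\CC^*,1)$ is aspherical, so the only obstruction to the lift sits in $H^2(L;\pi_1(B\CC^*))$-type data and coincides with $c_{\rho,1}$; (3) combine with the bundle-theoretic equivalence $c_{\rho,1}=0\iff V_\rho$ topologically trivial from \cite{zbMATH03199402}. The main obstacle is step (2): being careful that over a $3$-complex the unique obstruction to lifting along $\det$ is the degree-$2$ class $c_{\rho,1}$ and that no higher obstruction survives, which uses $\dim L=3$ together with the fact that $\BG[\SLn{n}]^d\to\BG[\GLn{n}]^d$ has homotopy fiber $\CC^*$ concentrated in degree $1$.
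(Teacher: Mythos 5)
Your ``only if'' direction is correct and is exactly the paper's argument: if $\rho$ has image in $\SLn{n}$ then $\det(\rho)$ is the trivial character, so $c_{\rho,1}=c_1(\det(\rho))=0$ by Remark~\ref{rem:C1Det}, and \cite[Theorem~3.2]{zbMATH03199402} over the $3$-complex $L$ gives topological triviality. Your worry about the converse is also well founded, and you should not have talked yourself out of it: for a general compact oriented $3$-manifold the implication $c_1(\det(\rho))=0\Rightarrow\det(\rho)$ trivial genuinely fails. Take $L=\mathbb{S}^1\times\mathbb{S}^2$ and $\rho\colon\ZZ\to\GLn{2}$, $1\mapsto\mathrm{diag}(\lambda,1)$ with $\lambda\neq1$: the class $c_1$ of a flat line bundle is a Bockstein image for $0\to\ZZ\to\CC\to\CC^*\to0$, hence torsion, so it vanishes in $H^2(L;\ZZ)\cong\ZZ$; thus $V_\rho$ is topologically trivial, yet $\det(\rho)\neq1$ and $\rho$ does not factor through $\SLn{2}$. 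The paper's own proof shares precisely the gap you spotted --- it establishes ``factors $\Rightarrow\det(\rho)$ trivial $\Rightarrow c_{\rho,1}=0\Leftrightarrow$ topologically trivial'' and never addresses the step $c_{\det(\rho),1}=0\Rightarrow\det(\rho)$ trivial --- so the lemma as stated needs an extra hypothesis.

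Your attempted repair in the final paragraph, however, is incorrect and should be discarded. Since $\SLn{n}^d$ is normal in $\GLn{n}^d$, the homotopy fibre of $\BG[\SLn{n}]^d\to\BG[\GLn{n}]^d$ is the \emph{discrete} set $\CC^{*d}$, not a topological circle, and the obstruction to lifting the classifying map $L\to\BG[\GLn{n}]^d$ is the homomorphism $\det\circ\rho$ itself, i.e.\ a class in $H^1(L;\CC^{*d})$, not the class $c_{\rho,1}\in H^2(L;\ZZ)$; the two are related by the Bockstein above, which is far from injective when $H^1(L;\CC)\neq0$. Consequently the ``homotopy-theoretic reading'' of factoring through $\SLn{n}$ coincides with the naive one and does not rescue the statement. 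The correct way to close the argument --- valid in every setting where the lemma is actually used in the paper, namely $L$ a rational homology sphere (so $\mathbb{S}^3/\Gamma$ and integral homology spheres) --- is to add the hypothesis $H^1(L;\CC)=0$: then the Bockstein $\Hom(\pi_1(L),\CC^*)\cong H^1(L;\CC^*)\to H^2(L;\ZZ)$ is injective, so $c_1(\det(\rho))=0$ forces $\det(\rho)$ to be the trivial character and $\rho$ factors through $\SLn{n}$ on the nose.
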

\begin{proof}
Notice that $\rho$ factors through $\SLn{n}$ if and only if $\det(\rho)\colon \Gamma\to \GLn{1}$ is the trivial representation.
By \eqref{eq:Vect.Class} the trivial representation corresponds to the trivial (flat) bundle, thus, $c_{\det(\rho),1}=0$.
By Remark~\ref{rem:C1Det} $c_{\rho,1}=c_{\det(\rho),1}=0$ and by \cite[Theorem~3.2]{zbMATH03199402}
this is equivalent to $\rho$ being topologically trivial.
\end{proof}

Let $L$ be a compact oriented $3$-manifold.
Let $\rho\colon\pi_1(L) \to \GLn{n}$ be a topologically trivial representation.
By Lemma~\ref{lema:TopoTrivil} $\rho$ has image in $\SLn{n}^d$, that is, we have $\rho\colon\pi_1(L) \to \SLn{n}^d$.
Let $V_\rho\to L$ be its associated flat vector bundle.
Its classifying map with connection $\bar{f}_n\colon L\to \BG[\GLn{n}]^d$ factorizes as
\begin{equation*}
\xymatrix{
&\BG[\SLn{n}]^d \ar[d]^\iota \\
L \ar[r]^>>>{\bar{f}_n} \ar@{.>}[ru]^{f_n} & \BG[\GLn{n}]^d.
}
\end{equation*}

Let $\widehat{c}_2\in H^3(\BG[\GLn{n}]^d;\CC/\ZZ)$ be the second universal Cheeger-Chern-Simons class (see Remark~\ref{rem:uccs}).
Denote by $\widehat{\widehat{c}}_2$ the Cohomology class
\begin{equation*}
 \widehat{\widehat{c}}_2=\iota^*(\widehat{c}_2)\in H^3(\BG[\SLn{n}]^d;\CC/\ZZ).
\end{equation*}
Then the second Cheeger-Chern-Simons class of $\rho$ is given by
\begin{equation*}
 \widehat{c}_{\rho,2}=f^*_n(\widehat{\widehat{c}}_2)\in H^3(L;\CC/\ZZ).
\end{equation*}
Hence the second CCS-number of $\rho$ is given by
\begin{equation}\label{eq:c2u.ffc}
\widehat{c}_{\rho,2}([L])=f_n^*(\widehat{\widehat{c}}_2)([L])=\widehat{\widehat{c}}_2((f_n)_*([L])),
\end{equation}
that is, evaluating the class $\widehat{\widehat{c}}_2$ on the homology class $(f_n)_*([L])\in H_3(\BG[\SLn{n}]^d;\ZZ)$.

The group $\SLn{n}^d$ is perfect and it is the commutator subgroup of $\GLn{n}^d$. 
Consider the limits $\mathrm{SL}(\CC)= \lim_n \SLn{n}^d$ and  $\mathrm{GL}(\CC)= \lim_n \GLn{n}^d$. 
Hence $\mathrm{SL}(\CC)$ is perfect and it is the commutator subgroup of $\mathrm{GL}(\CC)$.

Let $Bi\colon\BG[\mathrm{SL}(\CC)]\to\BG[\mathrm{GL}(\CC)]$ be the map between classifying spaces induced by the inclusion homomorphism $i\colon\mathrm{SL}(\CC)\to\mathrm{GL}(\CC)$.
Quillen's plus construction is funtorial \cite[Proposition~5.2.4]{Rosenberg:AlgKTheory} and applying it to $Bi$ with respect to $\mathrm{SL}(\CC)$ we get the universal cover of $\BG[\mathrm{GL}(\CC)]^+$ \cite[Thorem~5.2.7]{Rosenberg:AlgKTheory}
\begin{equation*}
 \pi=(Bi)^+\colon\BG[\mathrm{SL}(\CC)]^+\to\BG[\mathrm{GL}(\CC)]^+.
\end{equation*}
Hence, the third algebraic K-Theory group of $\CC$ is given by
\begin{equation*}
K_3(\CC)=\pi_3(\BG[\mathrm{GL}(\CC)]^+)=\pi_3(\BG[\mathrm{SL}(\CC)]^+).
\end{equation*}
By \cite[Proposition~2.5]{Sah:HCLGMDIII} the Hurewicz homomorphism
\begin{equation}\label{eq:hurewicz.SL}
h\colon K_3(\CC)=\pi_3(\BG[\mathrm{SL}(\CC)]^+) \to H_3(\BG[\mathrm{SL}(\CC)]^+;\ZZ)=H_3(\mathrm{SL}(\CC);\ZZ)
\end{equation}
is an isomorphism. On the other hand, we have the following stability isomorphism for $n\geq 3$ \cite[(2.8) and Theorem~4.1~(b)]{Sah:HCLGMDIII}
\begin{equation*}
 H_3(\SLn{n}^d;\ZZ)\to H_3(\mathrm{SL}(\CC);\ZZ).
\end{equation*}
Hence we have that
\begin{equation*}
 K_3(\CC)=H_3(\SLn{n}^d;\ZZ),\qquad n\geq3.
\end{equation*}

Let $L$ be a compact oriented $3$-manifold. Given a topologically trivial representation $\rho\colon\pi_1(L) \to \GLn{n}$ one can construct an element in $K_3(\CC)$ in the following way.
By Lemma~\ref{lema:TopoTrivil} we have $\rho\colon\pi_1(L) \to \SLn{n}^d$. Let $f_n\colon L \to\BG[\SLn{n}]^d$ be the map which induces $\rho$ on the fundamental group and compose it with the natural map $\iota_n\colon \BG[\SLn{n}^d] \to \BG[\mathrm{SL}(\CC)]$ to get
\begin{equation*}
f=\iota_n\circ f_n\colon L\to \BG[\mathrm{SL}(\CC)].
\end{equation*}
Consider the homomorphism induced in homology
\begin{equation*}
f_*\colon H_3(L;\ZZ)\to H_3(\BG[\mathrm{SL}(\CC)];\ZZ)\cong K_3(\CC).
\end{equation*}
The image of the fundamental class $[L]$ gives the element $\langle L,\rho\rangle=f_*([L])\in K_3(\CC)$.

In the case when $L$ is a rational homology sphere, we can extend the construction for representations $\rho\colon\pi_1(L) \to \GLn{n}$ that are not topologically trivial taking the representation
$\rho\oplus \det(\rho^*)\colon\pi_1(L) \to \SLn{n+1}$ which by Lemma~\ref{lem:topo.trivialization} is topologically trivial. Then take $\langle L,\rho\oplus \det(\rho^*)\rangle\in K_3(\CC)$.

Let $\Sigma$ be an integral homology $3$-sphere and $\rho\colon\pi_1(\Sigma)\to \GLn{n}$ a representation. 
Since $\pi_1(\Sigma)$ is perfect, every complex representation must have image in $\SLn{n}$, that is, the representation $\rho$ is topologically trivial
Lemma~\ref{lema:TopoTrivil}.
In \cite{Jones-Westbury:AKTHSEI} Jones and Westbury constructed an element $[\Sigma,\rho]$ in $K_3(\CC)$ as follows.
Consider again the map $f=\iota_n\circ f_n\colon \Sigma\to \BG[\mathrm{SL}(\CC)]$.

Applying Quillen plus construction to $f$ we obtain a map
\begin{equation*}
f^+ \colon \Sigma^+\cong \mathbb{S}^3 \to \BG[\mathrm{SL}(\CC)]^+.
\end{equation*}
Its homotopy class $[f^+]$ is an element $[\Sigma,\rho]=[f^+]\in\pi_3(\BG[\mathrm{SL}(\CC)]^+)=K_3(\CC)$.

In the case of an integral homology sphere $\Sigma$ one can compare the two constructions and see that they give the same element.

The maps $f$ and $f^+$ fit in the following commutative diagram
\begin{equation}\label{eq:f+}
    \xymatrix{
    \Sigma \ar[r]^{f} \ar[d]^{+} & \BG[\mathrm{SL}(\CC)]\ar[d]^{+} \\
    \mathbb{S}^3 \ar[r]^{f^+} & \BG[\mathrm{SL}(\CC)]^+.
    }
\end{equation}
Diagram \eqref{eq:f+} induces the following diagram in homology
\begin{equation}\label{eq:H.f+}
    \xymatrix{
    H_3(\Sigma;\ZZ) \ar[r]^{f_*} \ar[d]^{\cong} & H_3(\BG[\mathrm{SL}(\CC)];\ZZ) \ar[d]^{\cong} \\
    H_3(\mathbb{S}^3;\ZZ) \ar[r]^{f^+_*} & H_3(\BG[\mathrm{SL}(\CC)]^+;\ZZ)
    }
\end{equation}
Therefore,
\begin{equation}\label{eq:K3=}
\langle\Sigma,\rho\rangle=\tilde{f}_*([\Sigma])=\tilde{f}^+_*([S^3])=h([\tilde{f}^+])=h([\Sigma,\rho]),
\end{equation}
where $h$ is the Hurewicz isomorphism \eqref{eq:hurewicz.SL}.
Thus, we have proved the following

\begin{theorem}
Let $\Sigma$ be a integral homology $3$-sphere and $\rho\colon\pi_1(\Sigma)\to \SLn{n}$ a representation. Then $\langle\Sigma,\rho\rangle=[\Sigma,\rho]\in K_3(\CC)$.
\end{theorem}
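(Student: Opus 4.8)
The plan is to observe that both $\langle\Sigma,\rho\rangle$ and $[\Sigma,\rho]$ are built from the single map $f=\iota_n\circ f_n\colon\Sigma\to\BG[\mathrm{SL}(\CC)]$, and to match them up using the naturality of Quillen's plus construction together with the Hurewicz isomorphism.

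First I would record that, since $\Sigma$ is an integral homology $3$-sphere, $\pi_1(\Sigma)$ is perfect, so the plus construction $\Sigma\to\Sigma^+$ relative to $\pi_1(\Sigma)$ is defined; it is acyclic and kills $\pi_1$, hence $\Sigma^+$ is a simply connected space with the integral homology of $\mathbb{S}^3$, so $\Sigma^+\simeq\mathbb{S}^3$. Because $f$ carries $\pi_1(\Sigma)$ into $\pi_1(\BG[\mathrm{SL}(\CC)])=\mathrm{SL}(\CC)$, which is exactly the perfect subgroup used to form $\BG[\mathrm{SL}(\CC)]^+$, functoriality of the plus construction \cite[Proposition~5.2.4]{Rosenberg:AlgKTheory} yields a map $f^+\colon\Sigma^+\to\BG[\mathrm{SL}(\CC)]^+$ fitting into the commutative square \eqref{eq:f+}, and by definition $[\Sigma,\rho]=[f^+]\in\pi_3(\BG[\mathrm{SL}(\CC)]^+)=K_3(\CC)$.

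Next I would apply $H_3(-;\ZZ)$ to \eqref{eq:f+}, obtaining the commutative square \eqref{eq:H.f+} in which both vertical arrows are isomorphisms (the plus construction being acyclic) and the left one sends $[\Sigma]$ to the generator $[\mathbb{S}^3]$. Chasing $[\Sigma]$ around the square gives $f^+_*([\mathbb{S}^3])=f_*([\Sigma])=\langle\Sigma,\rho\rangle$, the last equality being the definition of $\langle\Sigma,\rho\rangle$ under the identification $H_3(\BG[\mathrm{SL}(\CC)];\ZZ)\cong K_3(\CC)$. Since $\mathbb{S}^3$ is $2$-connected, naturality of the Hurewicz map gives $f^+_*([\mathbb{S}^3])=h([f^+])$, where $h$ is the Hurewicz isomorphism \eqref{eq:hurewicz.SL} that we are using to identify $\pi_3(\BG[\mathrm{SL}(\CC)]^+)=K_3(\CC)$ with $H_3(\mathrm{SL}(\CC);\ZZ)$. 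Combining these equalities is precisely the chain \eqref{eq:K3=}, and it gives $\langle\Sigma,\rho\rangle=[f^+]=[\Sigma,\rho]$ in $K_3(\CC)$.

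There is no genuine obstacle here: the argument is a diagram chase of the fundamental class through \eqref{eq:f+} and \eqref{eq:H.f+}. The only point that demands care is the bookkeeping of the several identifications of $K_3(\CC)$ in play — as $\pi_3$ of a plus construction, as $H_3(\mathrm{SL}(\CC);\ZZ)$ via Hurewicz, and as $H_3(\SLn{n}^d;\ZZ)$ via homological stability — so as to be sure that $\langle\Sigma,\rho\rangle$ and $[\Sigma,\rho]$ are being compared inside the same copy of $K_3(\CC)$; this compatibility is exactly what the naturality of the plus construction and of the Hurewicz homomorphism provide.
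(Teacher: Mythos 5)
Your proof is correct and follows essentially the same route as the paper: functoriality of the plus construction yields the commutative square \eqref{eq:f+}, passing to $H_3$ gives \eqref{eq:H.f+}, and chasing the fundamental class through the diagram together with the naturality of the Hurewicz map reproduces the chain of equalities \eqref{eq:K3=}. Your added remarks on why $\Sigma^+\simeq\mathbb{S}^3$ and on keeping the various identifications of $K_3(\CC)$ consistent are exactly the points the paper leaves implicit.
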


In \cite[Theorem~A]{Jones-Westbury:AKTHSEI} the authors consider a homomorphism $e\colon K_3(\CC)\to\CC/\ZZ$ and prove that given a representation $\rho\colon\pi_1(\Sigma)\to \SLn{n}$
of the fundamental group of a integral homology $3$-sphere $\Sigma$ one has $e([\Sigma,\rho])=\tilde{\xi}_\rho(D)$, where $D$ is the Dirac operator on $\Sigma$.

On the other hand, expanding diagram \eqref{eq:H.f+} we get
\begin{equation}\label{eq:H.f+.e}
    \xymatrix{
    H_3(\Sigma;\ZZ) \ar[r]^{(f_n)_*} \ar[d]^{\cong} & H_3(\BG[\SLn{n}^d];\ZZ) \ar[d]^{\cong}\ar[r]^{(\iota_n)_*}_{\cong} & H_3(\BG[\mathrm{SL}(\CC)];\ZZ) \ar[d]^{\cong}\\
    H_3(\mathbb{S}^3;\ZZ) \ar[r]^{(f_n^+)_*} & H_3((\BG[\SLn{n}^d])^+;\ZZ)\ar[r]^{(\iota_n^+)_*}_{\cong} & H_3(\BG[\mathrm{SL}(\CC)]^+;\ZZ)
    }
\end{equation}
Which implies that the following diagram commutes
\begin{equation*}
\xymatrix{
K_3(\CC)=\pi_3(\BG[\mathrm{SL}(\CC)]^+)\ar[r]^{h}_{\cong}\ar[rd]_{e} & H_3(\BG[\mathrm{SL}(\CC)]^+;\ZZ)\ar[r]^{(\iota_n)_*^{-1}}_{\cong} &H_3(\BG[\SLn{n}^d];\ZZ)\ar[ld]^{\widehat{\widehat{c}}_2}\\
    &  \CC/\ZZ
}
\end{equation*}
since by \eqref{eq:K3=} and \eqref{eq:c2u.ffc} we have
\begin{equation*}
 e([\Sigma,\rho])=\widehat{\widehat{c}}_2\circ(\iota_n)_*^{-1}\circ h([\Sigma,\rho])=\widehat{\widehat{c}}_2\circ(\iota_n)_*^{-1}(\langle\Sigma,\rho\rangle)=
 \widehat{\widehat{c}}_2((f_n)_*([\Sigma]))=\widehat{c}_{\rho,2}([\Sigma])=\tilde{\xi}_\rho(D).
\end{equation*}
Therefore, Theorem~\ref{th:C2.APS} is a generalization of \cite[Theorem~A]{Jones-Westbury:AKTHSEI}.

\begin{remark}
When $\rho$ is a $2$-dimensional representation ($n=2$), in diagram \eqref{eq:H.f+.e} the homomorphism $(\iota_n)_*$ is not longer an isomorphism.
We have that
\begin{equation*}
 H_3(\BG[\SLn{2}^d];\ZZ)\cong K^{\mathrm{ind}}_3(\CC)\subset K_3(\CC)\cong H_3(\BG[\mathrm{SL}(\CC)];\ZZ),
\end{equation*}
where $K^{\mathrm{ind}}_3(\CC)$ is the indecomposable part of $K_3(\CC)$, and $(\iota_n)_*$ is the inclusion. In this case we take the composition of
$f_2\colon \Sigma\to \BG[\SLn{2}]$ with the map $\BG[\SLn{2}]\to\BG[\SLn{n}]$ induced by the inclusion $\SLn{2}\to\SLn{n}$ with $n>2$.
\end{remark}

\section{CCS-numbers of \texorpdfstring{$L=\mathbb{S}^3/\Gamma$}{S3/G}}\label{sec:s3.G}

Consider the compact oriented $3$-manifolds of the form $L=\mathbb{S}^3/\Gamma$, where $\Gamma$ is a finite subgroup of $\SU$.
In this section we compute the first and second CCS-numbers of all irreducible representations $\alpha\colon\pi_1(L)=\Gamma\to \GLn{n}$.
We compare our results with the ones by C.~B.~Thomas~\cite[\S~1]{zbMATH03538557} on the Chern classes of $\alpha$.

\subsection{The 3-manifolds \texorpdfstring{$L=\mathbb{S}^3/\Gamma$}{S3/G}}

Let $\Gamma$ be a finite subgroup of $\SU$.
As usual, denote by $[\Gamma,\Gamma]$ the \textit{commutator subgroup} of $\Gamma$, let $\mathrm{Ab}\colon\Gamma\to\Gamma/[\Gamma,\Gamma]$ be the projection,
and denote by $\mathrm{Ab}(\Gamma)=\Gamma/[\Gamma,\Gamma]$, the \textit{abelianization} of $\Gamma$.

Since $\GLn{1}$ is abelian, by the universal property of the abelianization, the map $\mathrm{Ab}$ induces the isomorphism
\begin{equation}\label{eq:IsoProp.2}
 \Hom(\mathrm{Ab}(\Gamma), \GLn{1})\xrightarrow[\cong]{\mathrm{Ab}^\star}\Hom(\Gamma, \GLn{1}),
\end{equation}
between one-dimensional representations of $\Gamma$ and one-dimensional representations of $\mathrm{Ab}(\Gamma)$.
Given $\varrho \in \Hom(\Gamma, \GLn{1})$, we denote by $\varrho_{\mathrm{Ab}} \in \Hom(\mathrm{Ab}(\Gamma), \GLn{1})$ the associated representation via the isomorphism \eqref{eq:IsoProp.2}.

The group $\Gamma$ acts on $\mathbb{S}^3\cong\SU$ from the left by multiplication.
The orbit space $L=\mathbb{S}^3/\Gamma$ is a compact oriented $3$-manifold, its fundamental group is $\pi_1(L)=\Gamma$ and its homology groups are
(see \cite[p.~45]{Lamotke:RSIS})
\begin{equation*}
H_0(L;\ZZ)\cong\ZZ,\quad H_1(L;\ZZ)\cong\mathrm{Ab}(\Gamma),\quad H_2(L;\ZZ)=0,\quad\text{and}\quad H_3(L;\ZZ)\cong\ZZ.
\end{equation*}
Thus, $L$ is a rational homology $3$-sphere and we can apply the results in Subsection~\ref{ssec:cn.rhs}.
By Lemma~\ref{lem:hc1.det}, to compute the first Cheeger-Chern-Simons class $\widehat{c}_{\rho,1}$ of $\rho$ it is equivalent to compute the
first Cheeger-Chern-Simons class $\widehat{c}_{\det(\rho),1}$ of the one-dimensional representation $\det(\rho)\colon\pi_1(L)=\Gamma\to\GLn{1}$. The representation $\det(\rho)$ induces a
classifying map (with flat connection) $\bar{f}_{\det} \colon L \to \BG[\GLn{1}]^d$.
Let
\begin{equation*}
\widehat{c}_1\in \widehat{H}^{1}(\BG[\GLn{1}]^d;\CC/\ZZ)\cong\Hom(\CC^{*d},\CC/\ZZ)
\end{equation*}
be the universal first Cheeger-Chern-Simons class for flat bundles. We have
that $\widehat{c}_{\det(\rho),1}=\bar{f}_{\det}^*(\widehat{c}_1)$, hence, given a class $\bar{\nu}$ of $H_{1}(L;\ZZ)\cong\mathrm{Ab}(\Gamma)$ we have
\begin{equation*}
\widehat{c}_{\det(\rho),1}(\bar{\nu})=\bar{f}_{\det}^*(\widehat{c}_1)(\bar{\nu})=\widehat{c}_1((\bar{f}_{\det})_*(\bar{\nu})).
\end{equation*}
But the homomorphism induced in homology by the map $\bar{f}_{\det}$
\begin{equation*}
(\bar{f}_{\det})_*\colon\mathrm{Ab}(\Gamma)\cong H_{1}(L;\ZZ)\to H_{1}(\BG[\GLn{1}]^d;\ZZ)\cong\CC^{*d}
\end{equation*}
is precisely the representation $\det(\rho)_{\mathrm{Ab}}\colon\mathrm{Ab}(\Gamma)\to\GLn{1}^d\cong\CC^{*d}$. Hence we have that (see \cite[(0.2)]{DUPONT1994})
\begin{equation}\label{eq:df}
\widehat{c}_{\rho,1}(\bar{\nu})=\widehat{c}_{\det(\rho),1}(\bar{\nu})=\widehat{c}_1(\det(\rho)_{\mathrm{Ab}}(\bar{\nu}))=\frac{1}{2\pi i}\log(\det(\rho)_{\mathrm{Ab}}(\bar{\nu})).
\end{equation}

\subsection{Finite subgroups of \texorpdfstring{$\SU$}{SU(2)}}\label{ssec:Gamma}

Let $\langle 2,q,r\rangle$ denote the group given by the presentation
\begin{equation}\label{eq:2qr}
\langle 2,q,r\rangle=\langle b,c\,|\,(bc)^2=b^q=c^r\rangle
\end{equation}
and let $\zeta_{l}=e^{\frac{2\pi i}{l}}$ be the $l$-th primitive root of unity. The classification of the finite subgroups of $\SU$ is well known (see \cite{Cayley:Poly,Klein:Icosahedron,Lamotke:RSIS}).
Following, we list them together with their irreducible representations. We follow the notation of \cite{CM-TEIOTDO}, where $\chi_j$ is the character of the
representation $\alpha_j$ and $\psi_t$ is the character of the representation $\rho_t$.

\paragraph{\textbf{Cyclic groups.} $\mathsf{C}_l$, $l\geq 2$} Order $l$. We identify $\mathsf{C}_l$ with the $l$-th roots of unity. The group $\mathsf{C}_l$ has $l$
irreducible representations $\alpha_j$, $1\leq j\leq l$ given by $\alpha_j(\zeta_{l})=\zeta_l^{j-1}$.

\paragraph{\textbf{Binary dihedral groups.} $\mathsf{BD}_{2r}=\langle 2,2,r\rangle$, $r\geq2$} Order $4r$. It has $r+3$ irreducible representations. Four one-dimensional representations, denoted by $\alpha_j$, $j=1,\dots,4$ given by:
\begin{description}
 \item[$r$ even]
 \begin{align*}
\alpha_1(b)&=\alpha_1(c)=1, &\alpha_2(b)&=-1,\alpha_2(c)=1,\\
\alpha_3(b)&=1, \alpha_3(c)=-1,& \alpha_4(b)&=\alpha_4(c)=-1.
\end{align*}
\item[$r$ odd]
\begin{align*}
\alpha_1(b)&=\alpha_1(c)=1, &\alpha_2(b)&=-1,\alpha_2(c)=1,\\
\alpha_3(b)&=i, \alpha_3(c)=-1,& \alpha_4(b)&=-i,\phi_4(c)=-1.
\end{align*}
\end{description}
And $r-1$ two-dimensional representations, denoted by $\rho_t$, $1\leq t\leq r-1$, given for any $r$ by:
\begin{equation*}
\rho_t(b)=\left(\begin{smallmatrix}0&1\\(-1)^t&0\end{smallmatrix}\right),\qquad \rho_t(c)=\left(\begin{smallmatrix}\zeta_{2r}^{t}&0\\0&\zeta_{2r}^{-t}\end{smallmatrix}\right),
\end{equation*}
where $\rho_1$ is the natural representation.

\paragraph{\textbf{Binary tetrahedral group.} $\mathsf{BT}=\langle 2,3,3\rangle$} Order $24$. It has $7$ irreducible representations, denoted by $\alpha_j$, $j=1,\dots,7$.
There are three $1$-dimensional representations:
\begin{equation*}
    \alpha_1(b)=\alpha_1(c)=1, \quad \alpha_2(b)=\zeta_3 , \alpha_2(c)=\zeta_3^2 \quad \text{and} \quad \alpha_3(b)=\zeta_3^2 , \alpha_3(c)=\zeta_3.
\end{equation*}
There are three $2$-dimensional representations:
\begin{align*}
 \alpha_4(b)&=\left(\begin{smallmatrix}0& 1\\ -1& 1 \end{smallmatrix}\right), &  \alpha_4(c)&= \left(\begin{smallmatrix}0& -\zeta_3\\ \zeta_3^2& 1\end{smallmatrix}\right),\\
\alpha_5(b)&=\left(\begin{smallmatrix}-\zeta_3^2& \zeta_3\\0& -1 \end{smallmatrix}\right), &  \alpha_5(c)&= \left(\begin{smallmatrix}-1& 0\\  \zeta_3^2& -\zeta_3 \end{smallmatrix}\right),\\
 \alpha_6(b)&=\left(\begin{smallmatrix}-\zeta_3& -\zeta_3^2\\ 0& -1 \end{smallmatrix}\right), &  \alpha_6(c)&= \left(\begin{smallmatrix}0& 1\\-\zeta_3^2& \zeta_3 \end{smallmatrix}\right).\\
\intertext{where $\alpha_4$ is the natural representation. There is one $3$-dimensional representation:}
\alpha_7(b)&=\left(\begin{smallmatrix}-1& -1& -1 \\ 1& 0& 0 \\ 0& 0& 1 \end{smallmatrix}\right), &  \alpha_7(c)&=\left(\begin{smallmatrix}-1& -1& -1 \\ 0& 1& 0 \\ 1& 0& 0\end{smallmatrix}\right).
\end{align*}

\paragraph{\textbf{Binary octahedral group.} $\mathsf{BO}=\langle 2,3,4\rangle$} Order $48$. It has $8$ irreducible representations, denoted by $\alpha_j$, $j=1,\dots,8$.
There are two $1$-dimensional representations:
\begin{equation*}
\alpha_1(b)=\alpha_1(c)=1,\quad\alpha_2(b)=1,\alpha_{2}(c)=-1.
\end{equation*}
There are three $2$-dimensional representations:
\begin{align*}
\alpha_3(b)&=\left(\begin{smallmatrix}0& 1\\ -1& -1 \end{smallmatrix}\right), & \alpha_3(c)&=\left(\begin{smallmatrix}-1& -1\\ 0& 1 \end{smallmatrix}\right),\\
\alpha_4(b)&=\tfrac{1}{2}\left(\begin{smallmatrix}1+\sqrt{2}i& -i \\  -i& 1-\sqrt{2}i \end{smallmatrix}\right), & \alpha_4(c)&=\tfrac{1}{2}\left(\begin{smallmatrix}-\sqrt{2}-i& -1  \\  1& -\sqrt{2}+i \end{smallmatrix}\right),\\
\alpha_5(b)&=\left(\begin{smallmatrix} 0& -i \\  -i& 1 \end{smallmatrix}\right), & \alpha_5(c)&=\left(\begin{smallmatrix}i& -\sqrt{2}i \\ -1& \sqrt{2}-i \end{smallmatrix}\right),\\
\intertext{where $\alpha_4$ is the natural representation. There are two $3$-dimensional representations:}
\alpha_6(b)&=\left(\begin{smallmatrix}1& 0& -1 \\  0& 0& 1  \\ 0& -1& -1 \end{smallmatrix}\right), & \alpha_6(c)&=\left(\begin{smallmatrix}0& 1& 1  \\  -1& -1& 0  \\   0& 1& 0\end{smallmatrix}\right),\\
\alpha_7(b)&=\left(\begin{smallmatrix}-1& 0& -1  \\  1& 1& 0   \\   1& 0& 0 \end{smallmatrix}\right), & \alpha_7(c)&=\left(\begin{smallmatrix}0& 0& -1   \\ 1& 0& 1   \\  0& -1& 1\end{smallmatrix}\right).\\
\intertext{There is one $4$-dimensional representation:}
\alpha_8(b)&=\left(\begin{smallmatrix}-\zeta_3& 0& 0& 0   \\  -1& -1& 0& 0   \\ 0& 0& -1& -1     \\  0& 0& 0& -\zeta_3^2
\end{smallmatrix}\right), & \alpha_8(c)&=\left(\begin{smallmatrix}0& 0& \zeta_3^2& \zeta_3^2 \\ 0& 0& 0& \zeta_3   \\  0& -\zeta_3& 0& 0  \\ \zeta_3^2& -1& 0& 0
\end{smallmatrix}\right).
\end{align*}

\paragraph{\textbf{Binary icosahedral group.} $\mathsf{BI}=\langle 2,3,5\rangle$} Order $120$. It has $9$ irreducible representations. One $1$-dimensional representation $\alpha_1$;
two $2$-dimensional representations $\alpha_2,\alpha_3$, where $\alpha_2$ is the natural representation; two $3$-dimensional representations $\alpha_4,\alpha_5$;
two $4$-dimensional representations $\alpha_6,\alpha_7$; one $5$-dimensional representation $\alpha_8$ and one $6$-dimensional representation $\alpha_9$.
To save space we do not list them explicitly, since we will not need them in what follows.

\subsection{First CCS-numbers of irreducible representations of \texorpdfstring{$\pi_1(L)$}{pi1(L)}}\label{th:Gamma.C1}

Let $L=\mathbb{S}^3/\Gamma$ with $\Gamma=\mathsf{C}_l,\mathsf{BD}_{2r},\mathsf{BT},\mathsf{BO},\mathsf{BI}$.
Denote by $\bar{b},\bar{c}$ the images in $\mathrm{Ab}(\Gamma)$ of the generators $b,c$ of $\Gamma$ in presentation \eqref{eq:2qr}.
Table~\ref{tb:AbG} shows the abelianizations of $\Gamma$ (see \cite[II\S5-Table~2]{Lamotke:RSIS}).
\begin{table}[h]
\setlength{\extrarowheight}{4pt}
\begin{tabular}{|l|c|l|}\cline{1-3}
$\Gamma< \SLn{2}$ & $\mathrm{Ab}(\Gamma)$& Generators\\\hline
$\mathsf{C}_l$ &  $\mathsf{C}_l$& generated by $\zeta_l$\\\hline
$\mathsf{BD}_{2r}$ ($r$ even)&  $\mathsf{C}_2\oplus\mathsf{C}_2$& generated by $\bar{b}$ and $\bar{c}$ \\\hline
$\mathsf{BD}_{2r}$ ($r$ odd) &  $\mathsf{C}_4$& generated by $\bar{b}$ and $\bar{c}=\bar{b}^2$ \\\hline
$\mathsf{BT}$ & $\mathsf{C}_3$& generated by $\bar{c}$ and $\bar{b}=\bar{c}^{-1}$ \\\hline
$\mathsf{BO}$ & $\mathsf{C}_2$& generated by $\bar{c}$ and $\bar{b}=1$\\\hline
$\mathsf{BI}$ & $\{1\}$&\\
\hline
\end{tabular}
\caption{Abelianizations of $\Gamma$}\label{tb:AbG}
\end{table}

Let $\bar{\nu}$ be a generator of $H_{1}(L;\ZZ)=\mathrm{Ab}(\Gamma)$ (either $\zeta_l$, $\bar{b}$ or $\bar{c}$ according to $\Gamma$ in Table~\ref{tb:AbG}).
Let $\alpha\colon\pi_1(L)=\Gamma\to\GLn{n}$ be an irreducible representation. By \eqref{eq:df} and the isomorphism \eqref{eq:IsoProp.2} we have
\begin{equation*}
\widehat{c}_{\alpha,1}(\bar{\nu})=\widehat{c}_{\det\alpha,1}(\bar{\nu})=\frac{1}{2\pi i}\log(\det(\alpha(\nu))),
\end{equation*}
where $\nu=\zeta_l$ if $\bar{\nu}=\zeta_l$; $\nu=b$ if $\bar{\nu}=\bar{b}$; and $\nu=c$ if $\bar{\nu}=\bar{c}$.

Table~\ref{tb:fcn} shows the first CCS-numbers of the irreducible representations $\alpha\colon\pi_1(L)=\Gamma\to\GLn{n}$ with respect of the generators of $H_{1}(L;\ZZ)=\mathrm{Ab}(\Gamma)$.

\begin{table}[H]
\begin{equation*}
\setlength{\extrarowheight}{3pt}
\begin{array}{|c|c|c|c|c|c|c|c|c|c|}\hline
\mathsf{C}_l & \alpha_1 &  \alpha_2 & \dots & \alpha_l& & & &  & \\\hline
\widehat{c}_{\alpha,1}(\zeta_l) & 0 & \frac{1}{l} &\dots  & \frac{l-1}{l}& & & & & \\[3pt]\hline\hline
\mathsf{BD}_{2r},\ \text{$r=2l$ {\tiny even}} & \alpha_1 & \alpha_2 & \alpha_3 & \alpha_4 & \rho_{2t-1} & \rho_{2t} & & & \\\hline
\widehat{c}_{\alpha,1}(\bar{b}) & 0 & \frac{1}{2} & 0 & \frac{1}{2} & 0 & \frac{1}{2} & & & \\[3pt]\hline
\widehat{c}_{\alpha,1}(\bar{c}) & 0 & 0 & \frac{1}{2} & \frac{1}{2} & 0 & 0 & & & \\[3pt]\hline\hline
\mathsf{BD}_{2r},\ \text{$r=2l-1$ {\tiny odd}} & \alpha_1 & \alpha_2 & \alpha_3 & \alpha_4 & \rho_{2t-1} & \rho_{2t} & & & \\\hline
\widehat{c}_{\alpha,1}(\bar{b}) & 0 & \frac{1}{2} & \frac{1}{4} & \frac{3}{4} & 0 & \frac{1}{2} & & & \\[3pt]\hline
\widehat{c}_{\alpha,1}(\bar{c}) & 0 & 0 & \frac{1}{2} & \frac{1}{2} & 0 & 0 & & & \\[3pt]\hline\hline
\mathsf{BT} & \alpha_1 & \alpha_2 & \alpha_3 & \alpha_4 & \alpha_5 & \alpha_6 & \alpha_7 & & \\\hline
\widehat{c}_{\alpha,1}(\bar{c}) & 0 & \frac{2}{3} & \frac{1}{3} & 0 & \frac{1}{3} & \frac{2}{3} & 0 & & \\[3pt]\hline\hline
\mathsf{BO} & \alpha_1 & \alpha_2 & \alpha_3 & \alpha_4 & \alpha_5 & \alpha_6 & \alpha_7 & \alpha_8 & \\\hline
\widehat{c}_{\alpha,1}(\bar{c}) & 0 & \frac{1}{2} & \frac{1}{2} & 0 & 0 & \frac{1}{2} & 0 & 0 & \\[3pt]\hline\hline
\mathsf{BI} & \alpha_1 & \alpha_2 & \alpha_3 & \alpha_4 & \alpha_5 & \alpha_6 & \alpha_7 & \alpha_8 & \alpha_9 \\\hline
\widehat{c}_{\alpha,1}(1) & 0 & 0 & 0 & 0 & 0 & 0 & 0 & 0 & 0 \\\hline
\end{array}
\end{equation*}
\caption{First CCS-numbers}\label{tb:fcn}
\end{table}

\subsection{Second CCS-numbers of irreducible representations of \texorpdfstring{$\pi_1(L)$}{pi1(L)}}\label{th:C2APS}

Let $L=\mathbb{S}^3/\Gamma$ with $\Gamma=\mathsf{C}_l,\mathsf{BD}_{2r},\mathsf{BT},\mathsf{BO},\mathsf{BI}$.
To compute the second CCS-number $\widehat{c}_{\alpha,2}([L])$ of an irreducible representation $\alpha\colon\Gamma\to \GLn{n}$ we use the results of \cite{CM-TEIOTDO} as follows.

If $\alpha$ is topologically trivial ($\widehat{c}_{\alpha,1}=0$ by Lemma~\ref{lem:ci=0}), by Corollary~\ref{cor:c2.xi} we have that $\widehat{c}_{\alpha,2}([L])=\tilde{\xi}_\alpha(D)$.
If $\alpha$ is not topologically trivial ($\widehat{c}_{\alpha,1}\neq0$), by Theorem~\ref{Cor:NoTop} we have $\widehat{c}_{\alpha,2}([L])=\tilde\xi_{\alpha}(D)-\tilde\xi_{\det(\alpha)}(D)$.

\begin{proposition}\label{prop:id.c20}
The first CCS-number of one-dimensional irreducible representations $\alpha\colon\Gamma\to \GLn{1}$ of a finite subgroup $\Gamma$ of $\SU$ is zero.
\end{proposition}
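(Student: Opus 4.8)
The plan is to prove the stronger fact that the entire second Cheeger--Chern--Simons class $\widehat{c}_{\alpha,2}\in H^3(L;\CC/\ZZ)$ vanishes, so that in particular its evaluation on $[L]$ is $0$. I would first isolate where the rank of $\alpha$ enters the construction of Subsection~\ref{subsec:CCS}: on $1\times 1$ matrices $\det(A+tI)=t+a$, so in \eqref{eq:Chern-poly} one has $C_0\equiv 1$, $C_1(a)=a$, and $C_2\equiv 0$. Hence for the universal line bundle with universal connection $(\gamma^1,\BG[\GLn{1}],\nabla_{\mathrm{univ}})$ we get $C_2(\gamma^1,\nabla_{\mathrm{univ}})=0$ and $C_2(\gamma^1)=0$.

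From here I would run the construction of the universal differential character verbatim. Since $H^{\mathrm{odd}}(\BG[\GLn{1}];\CC)=0$, the exact sequence \eqref{eq:ES3} shows that $(\delta_1,\delta_2)$ is injective on $\widehat{H}^4(\BG[\GLn{1}];\CC/\ZZ)$; because the universal second Chern differential character $\widehat{C}_2$ satisfies $(\delta_1,\delta_2)(\widehat{C}_2)=\bigl(C_2(\gamma^1,\nabla_{\mathrm{univ}}),C_2(\gamma^1)\bigr)=(0,0)$ by \eqref{eq:CCS.C}, it must vanish. Therefore the universal flat class $\widehat{c}_2=\iota^*(\widehat{C}_2)\in H^3(\BG[\GLn{1}]^d;\CC/\ZZ)$ is zero, and pulling it back along the classifying map of $\alpha$ gives $\widehat{c}_{\alpha,2}=0$, hence $\widehat{c}_{\alpha,2}([L])=0$. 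I do not expect a genuine obstacle in this argument; it is purely formal once one notices that $C_2\equiv 0$ for $n=1$.

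Alternatively, the statement also drops out of the recipe recorded just before the proposition: since $\dim\alpha=1$ we have $\det(\alpha)=\alpha$, so if $\alpha$ is not topologically trivial then Theorem~\ref{Cor:NoTop} gives $\widehat{c}_{\alpha,2}([L])=\tilde{\xi}_{\alpha}(D)-\tilde{\xi}_{\det(\alpha)}(D)=0$, while if $\alpha$ is topologically trivial then $c_{\alpha,1}=0$ by Lemma~\ref{lem:ci=0}, and since a topological line bundle over the rational homology sphere $L$ is determined by its first Chern class, $V_\alpha$ is trivial, $\alpha$ is the trivial representation, and $\widehat{c}_{\alpha,2}([L])=\tilde{\xi}_{\alpha}(D)=0$. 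The only point in this second route that needs care is that last implication, which is precisely where the hypothesis that $L$ is a rational homology sphere is used; since the first route avoids it entirely, I would present that one and mention this one only as a consistency check.
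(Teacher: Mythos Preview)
Your proposal is correct, and your second route is essentially the paper's own proof: the paper splits into the trivial representation $\alpha_1$ (where $\tilde{\xi}_{\alpha_1}(D)=0$ by definition \eqref{eq:reduced}) and the non-topologically-trivial case (where Theorem~\ref{Cor:NoTop} together with $\det(\alpha)=\alpha$ gives the result), appealing to Table~\ref{tb:fcn} to see that the only topologically trivial one-dimensional irreducible is the trivial one. Your alternative justification of that last point via line bundles and Lemma~\ref{lem:ci=0} is a cleaner way to make the same split, and your caveat about it is well placed.

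Your first route, however, is genuinely different and strictly more elementary. Observing that $C_2\equiv 0$ on $1\times 1$ matrices and then killing the universal character $\widehat{C}_2\in\widehat{H}^4(\BG[\GLn{1}];\CC/\ZZ)$ via the injectivity of $(\delta_1,\delta_2)$ from \eqref{eq:ES3} bypasses Theorem~\ref{th:C2.APS}, Theorem~\ref{Cor:NoTop}, and the Index Theorem for flat bundles altogether; it also yields the stronger conclusion $\widehat{c}_{\alpha,2}=0$ as a cohomology class, for any one-dimensional representation of the fundamental group of \emph{any} manifold, not just the rational homology spheres $\mathbb{S}^3/\Gamma$. What the paper's approach buys in exchange is that it stays inside the $\tilde{\xi}$-framework already set up in the section, so the vanishing appears as an immediate internal consistency check on Theorem~\ref{Cor:NoTop} rather than as an independent argument.
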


\begin{proof}
From Table~\ref{tb:fcn} and by Lemma~\ref{lem:ci=0} the only $1$-dimensional irreducible representation which is topologically trivial is the
trivial representation $\alpha_1$. Thus, $\widehat{c}_{\alpha_1,2}([L])=\tilde{\xi}_{\alpha_1}(D)$ which by definition \eqref{eq:reduced} is zero.
If $\alpha$ is not topologically trivial, $\widehat{c}_{\alpha,2}([L])=\tilde\xi_{\alpha}(D)-\tilde\xi_{\det(\alpha)}(D)$, but since it is $1$-dimensional, we have that $\alpha=\det(\alpha)$ and thus $\widehat{c}_{\alpha,2}([L])=0$.
\end{proof}

By Proposition~\ref{prop:id.c20} we only need to consider the irreducible representations of dimension bigger than $1$. In Table~\ref{tb:red.xi} we recall the values of
$\tilde{\xi}_\alpha(D)$ computed in \cite{CM-TEIOTDO}.

\begin{table}[h]
\begin{equation*}
\setlength{\extrarowheight}{5pt}
\begin{array}{|*{12}{c|}}\hline
\mathsf{BT}&\mathbf{\alpha_{4}}&\alpha_{5}&\alpha_{6}&\alpha_{7}&\mathsf{BO}&\alpha_{3}&\mathbf{\alpha_{4}}&\alpha_{5}&\alpha_{6}&\alpha_{7}&\alpha_{8}\\[5pt]\hline
\tilde{\xi}_\alpha(D)&\mathbf{\frac{1}{24}}&\frac{17}{24}&\frac{17}{24}&\frac{1}{6}&\tilde{\xi}_\alpha(D)&\frac{7}{12}&\mathbf{\frac{1}{48}}&\frac{25}{48}&\frac{5}{6}&\frac{1}{12}&\frac{5}{24}\\[5pt]\hline
\mathsf{BD}_{2r}&\mathbf{\rho_1}&\mathsf{BI}&\mathbf{\alpha_{2}}&\alpha_{3}&\alpha_{4}&\alpha_{5}&\alpha_{6}&\alpha_{7}&\alpha_{8}&\alpha_{9}&\\[5pt]\hline
\tilde{\xi}_\alpha(D)&\mathbf{\frac{1}{4r}}&\tilde{\xi}_\alpha(D)&\mathbf{\frac{1}{120}}&\frac{49}{120}&\frac{19}{30}&\frac{1}{30}&\frac{5}{6}&\frac{1}{12}&\frac{1}{6}&\frac{7}{24}&\\[5pt]\hline
\end{array}
\end{equation*}
\caption{Values of $\tilde{\xi}_\alpha(D)$ from \cite{CM-TEIOTDO}. The natural representations are in boldface.}\label{tb:red.xi}
\end{table}

Table~\ref{tb:scn} shows the second CCS-numbers of the irreducible representations of $\Gamma$. For $\mathsf{BD}_{2r}$ we only give the explicit value for the natural representation $\rho_1$,
which is the one we shall use in the sequel. The values for the other $2$-dimensional representations $\rho_t$, $1<t<r$ can be computed using the formulae in \cite[Theorem~5.1]{CM-TEIOTDO} and Theorem~\ref{Cor:NoTop}.
\begin{table}[h]
\begin{equation*}
\setlength{\extrarowheight}{3pt}
\begin{array}{|c|c|c|c|c|c|c|c|c|c|}\hline
\mathsf{C}_l & \alpha_1 &  \alpha_2 & \dots & \alpha_l& & & &  & \\\hline
\widehat{c}_{\alpha,2}([L]) & 0 & 0 & \ldots & 0 & & & & & \\\hline\hline
\mathsf{BD}_{2r},\ \text{$r$ even} & \alpha_1 & \alpha_2 & \alpha_3 & \alpha_4 & \rho_1 & & & & \\\hline
\widehat{c}_{\alpha,2}([L]) & 0 & 0 & 0 & 0 & \frac{1}{4r} & & & & \\\hline
\mathsf{BD}_{2r},\ \text{$r$ odd} & \alpha_1 & \alpha_2 & \alpha_3 & \alpha_4 & \rho_1 & & & & \\\hline
\widehat{c}_{\alpha,2}([L]) & 0 & 0 & 0 & 0 & \frac{1}{4r} & & & & \\\hline\hline
\mathsf{BT} & \alpha_1 & \alpha_2 & \alpha_3 & \alpha_4 & \alpha_5 & \alpha_6 & \alpha_7 & & \\\hline
\widehat{c}_{\alpha,2}([L]) & 0 & 0 & 0 & \frac{1}{24} & \frac{3}{8} & \frac{3}{8} & \frac{1}{6} & & \\[4pt]\hline\hline
\mathsf{BO} & \alpha_1 & \alpha_2 & \alpha_3 & \alpha_4 & \alpha_5 & \alpha_6 & \alpha_7 & \alpha_8 & \\\hline
\widehat{c}_{\alpha,2}([L]) & 0 & 0 & \frac{1}{3} & \frac{1}{48} & \frac{25}{48} & \frac{7}{12} & \frac{1}{12} & \frac{5}{24} & \\[4pt]\hline\hline
\mathsf{BI} & \alpha_1 & \alpha_2 & \alpha_3 & \alpha_4 & \alpha_5 & \alpha_6 & \alpha_7 & \alpha_8 & \alpha_9 \\\hline
\widehat{c}_{\alpha,2}([L]) & 0 & \frac{1}{120} & \frac{49}{120} & \frac{19}{30} & \frac{1}{30} & \frac{5}{6} & \frac{1}{12} & \frac{1}{6} & \frac{7}{24} \\[4pt]\hline
\end{array}
\end{equation*}
\caption{Second CCS-numbers}\label{tb:scn}
\end{table}

\subsection{Comparison with the results of C.~B.~Thomas}\label{ssec:Thomas}

The group $\Gamma$ acts freely on $\mathbb{S}^3$, then $\Gamma$ has periodic cohomology of period $4$ (period $2$ if $G$ is cyclic) (see~\cite[Chapter~VI~\S~9]{zbMATH03935317}).
In Table~\ref{Tabular:CohomologyG} we summarize all the Tate cohomology groups $\widehat{H}^j(\Gamma;\ZZ)$ for $j \in \{0,1,2,3\}$.

\begin{table}[h]
\setlength{\extrarowheight}{3pt}
\begin{tabular}{|c | m{8em} | m{2cm}| m{2cm} | m{2cm} | m{2cm} |}
  \hline
$\Gamma$  & $\widehat{H}^0(\Gamma;\ZZ)$ & $\widehat{H}^1(\Gamma;\ZZ)$ & $\widehat{H}^2(\Gamma;\ZZ)$ & $\widehat{H}^3(\Gamma;\ZZ)$  \\
  \hline
  $\mathsf{C}_l$ & $\ZZ/l\ZZ$ & $0$ & $\ZZ/l\ZZ$ & $0$  \\
  \hline
  $\mathsf{BD}_{2r}$, $r$ even  & $\ZZ/4r\ZZ$ & $0$ & $\ZZ_2\oplus\ZZ_2$ & $0$  \\
  \hline
  $\mathsf{BD}_{2r}$, $r$ odd  & $\ZZ/4r\ZZ$ & $0$ & $\ZZ/4\ZZ$ & $0$  \\
  \hline
  $\mathsf{BT}$ & $\ZZ/24\ZZ$ & $0$ & $\ZZ/3\ZZ$ & $0$  \\
  \hline
  $\mathsf{BO}$ & $\ZZ/48\ZZ$ & $0$ & $\ZZ/2\ZZ$ & $0$  \\
  \hline
  $\mathsf{BI}$ & $\ZZ/120\ZZ$ & $0$ & $0$ & $0$  \\
  \hline
\end{tabular}
\caption{Tate Cohomology Groups of $\Gamma$.}\label{Tabular:CohomologyG}
\end{table}

To check that our computations are correct, we can compare the resuls on Table~\ref{tb:scn} with the ones given by C. B. Thomas~\cite[\S~1]{zbMATH03538557}, where the author
expreses the total Chern classes of the irreducible representations of $\Gamma$, a finite subgroup of $\SU$, in terms of generators of the cohomology ring of the classifying space $\BG[\Gamma]$.

Let $L=\mathbb{S}^3/\Gamma$ with $\Gamma=\mathsf{C}_l,\mathsf{BD}_{2r},\mathsf{BT},\mathsf{BO},\mathsf{BI}$. Consider the map $\phi\colon L\to \BG[\Gamma]$ and let $\varsigma=\phi_*([L])$, the image of the fundamental class $[L]\in H_3(L;\ZZ)$ by the homomorphism $\phi_*\colon H_3(L;\ZZ)\to H_3(\BG[\Gamma];\ZZ)$ induced by $\phi$ in homology. By \eqref{eq:ccsL.ccsBG}, the second CCS-number of a representation $\alpha\colon\Gamma\to\GLn{n}$ is given by
\begin{equation}\label{eq:ccs.BG.varsigma}
\widehat{c}_{\alpha,2}([L])=\widehat{c}_2(\alpha)(\varsigma),
\end{equation}
where $\widehat{c}_2(\alpha)\in H^3(\BG[\Gamma];\CC/\ZZ)$ is the second Cheeger-Chern-Simons class of $\alpha$. Using the isomorphism \eqref{eq:Iso.Hom.Chern}
$H^3(\BG[\Gamma];\CC/\ZZ)\cong \Hom(H_{3}(\BG[\Gamma];\ZZ),\CC/\ZZ)$ we can define a homomorphism
\begin{equation}\label{eq:ev.sigma}
\begin{split}
H^3(\BG[\Gamma];\CC/\ZZ)&\to\CC/\ZZ\\
\nu&\mapsto \nu(\varsigma).
\end{split}
\end{equation}
by evaluation in the homology class $\varsigma\in H_3(\BG[\Gamma];\ZZ)$. By the Universal Coefficients Theorem, the fact that $\CC$ is a divisible abelian group and that
the homology groups $H_j(\BG[\Gamma];\ZZ)$ are torsion, one can prove that $H^j(\BG[\Gamma];\CC)=0$ for all positive integer $j$.
Using this, in the cohomology long exact sequence \eqref{eq:coef.CheegerSimons2} of $\BG$ corresponding to the short exact sequence \eqref{eq:coef.CheegerSimons1}, we get an isomorphism
$H^{j}(\BG[\Gamma];\ZZ)\cong H^{j-1}(\BG[\Gamma]; \CC/\ZZ)$ for $j\geq 2$, under which, the $k$-th Chern class $c_k(\alpha)$ and the $k$-th Cheeger-Chern-Simons class $\widehat{c}_{k}(\alpha)$ of $\alpha$ correspond to each other
\begin{equation}\label{eq:crk.ckr}
\begin{split}
H^{j}(\BG[\Gamma];\ZZ)&\cong H^{j-1}(\BG[\Gamma]; \CC/\ZZ)\\
c_k(\alpha)&\leftrightarrow \widehat{c}_k(\alpha).
\end{split}
\end{equation}
Composing isomorphism \eqref{eq:crk.ckr} for $j=4$ with homomorphism \eqref{eq:ev.sigma} we get a homomorphism
\begin{equation}\label{eq:hom.c.cn}
 H^{4}(\BG[\Gamma];\ZZ)\cong H^{3}(\BG[\Gamma]; \CC/\ZZ)\to\CC/\ZZ,
\end{equation}
which sends the second Chern class $c_2(\alpha)$ of a representation $\alpha$ to its second CCS-number $\widehat{c}_2(\alpha)(\varsigma)$.
Thus, any relation between cohomology classes in $H^{4}(\BG[\Gamma];\ZZ)$ will also be satisfied by the corresponding images in $\CC/\ZZ$.

For instance, in the case of the binary tetrahedral group $\mathsf{BT}$, we have \cite[\S~1]{zbMATH03538557}:
\begin{equation}\label{eq:ch.ring}
H^\bullet(\mathsf{BT};\ZZ)=\ZZ_8[x]+\ZZ_3[y],
\end{equation}
with $x\in H^4(\mathsf{BT};\ZZ)$ and $y\in H^2(\mathsf{BT};\ZZ)$. The total Chern classes of the irreducible representations of $\mathsf{BT}$ given in Subsection~\ref{ssec:Gamma} are
\begin{align}
c(\alpha_1)&=1, &   c(\alpha_2)&=1-y,\notag\\
c(\alpha_3)&=1+y, & c(\alpha_4)&=1+x-y^2,\label{eq:c2.a4}\\
c(\alpha_5)&=1+x-y, & c(\alpha_6)&=1+x+y,\label{eq:c2.a56}\\
c(\alpha_7)&=1+4x-y^2.\label{eq:c2.a7}
\end{align}
Recall that the natural representation is $\alpha_4$, its second Chern class $c_2(\alpha_4)$ generates $H^4(\mathsf{BT};\ZZ)$ \cite[Proposition~2]{zbMATH03538557}.
Therefore, for any representation $\alpha\colon \Gamma \to \mathrm{GL}(n,\CC)$, there exists $N\in \ZZ$ such that $\widehat{c}_2(\alpha)=N\widehat{c}_2(\alpha_4)$.

From Table~\ref{tb:scn} we have
\begin{equation}\label{eq:rel1.hc}
 9\cdot\widehat{c}_2(\alpha_4)(\varsigma)=9\cdot\frac{1}{24}=\frac{9}{24}=\widehat{c}_2(\alpha_5)(\varsigma)=\widehat{c}_2(\alpha_6)(\varsigma)
\end{equation}
and
\begin{equation}\label{eq:rel2.hc}
 4\cdot\widehat{c}_2(\alpha_4)(\varsigma)=4\cdot\frac{1}{24}=\frac{4}{24}=\widehat{c}_2(\alpha_7)(\varsigma).
\end{equation}
On the other hand, from \eqref{eq:c2.a4} we have that $c_2(\alpha_4)=x-y^2$. By \eqref{eq:ch.ring} and \eqref{eq:c2.a56} we have
\begin{equation}\label{eq:rel1.c}
9\cdot c_2(\alpha_4)=9(x-y^2)=9x-9y^2=x=c_2(\alpha_5)=c_2(\alpha_6),
\end{equation}
and by \eqref{eq:c2.a7} we have
\begin{equation}\label{eq:rel2.c}
4\cdot c_2(\alpha_4)=4(x-y^2)=4x-4y^2=4x-y^2=c_2(\alpha_7).
\end{equation}
Hence, relations \eqref{eq:rel1.c} and \eqref{eq:rel2.c} correspond, respectively, to relations  \eqref{eq:rel1.hc} and \eqref{eq:rel2.hc} under homomorfism \eqref{eq:hom.c.cn}.
The other cases are analogous.

\section{Applications to singularity theory}
In this section, motivated by the results of Section~\ref{sec:s3.G}, we define new invariants for surface singularities.
The main result of the section recovers the spectrum of rational double point singularities, from the CCS-numbers of the irreducible representations of its local fundamental group.
Finally, using a result by Atiyah, Patodi and Singer we show other ways to compute the invariant $\tilde{\xi}_{\rho}(D)$ with $\rho$ a representation of the local fundamental group
of a normal surface singularity with link a rational homology sphere, using a resolution or an smoothing of the singularity.

\subsection{Normal surface singularities} 

We recall basic facts about normal surface sigularities, for further reference see \cite{Nemethi:NSS}.
Denote by $(X,x)$ either a complex analytic normal  surface germ or the spectrum of a normal complete $\mathbb{C}$-algebra of dimension $2$.
Let $\pi \colon \Rs\to X$, be a \emph{resolution of} $(X,x)$, i.e., a proper holomorphic map from a smooth surface $\Rs$ to a given representative of $(X,x)$ such that $\pi$ is biholomorphic in the complement of $\pi^{-1}(x)$. Denote the exceptional divisor by $E:=\pi^{-1}(x)$, with irreducible components $E_1,\dots,E_s$. The resolution is \emph{minimal} if there is no rational irreducible exceptional divisor $E_i$ with self intersection $E_i^2=-1$. Fix any resolution $ \pi \colon \Rs\to X$.  The group of \emph{divisors} $\mathrm{Div}(\Rs)$ of $\Rs$ is defined as
\begin{equation*}
    \mathrm{Div}(\Rs):= \set{\sum n_j D_j}{\text{$D_j$ is a irreducible curve on $\Rs$ and $n_j \in \ZZ$}}.
\end{equation*}
The \emph{support} of a divisor $D=\sum n_j D_j$, denoted by $|D|$, is the union of the irreducible curves $D_j$; i.e., $|D|=\bigcup D_j$.
\begin{definition}
    Let $D$ be a divisor. We say that:
    \begin{enumerate}
        \item  $D$ is a \emph{cycle}  if $|D| \subset E$.
        \item $D$ is a \emph{rational cycle}  if $D=\sum r_j E_j$ where $r_j \in \QQ$.
    \end{enumerate}
    An integer or rational cycle is  \emph{effective}, if $r_j \geq 0$ for all $j$.
\end{definition}
There is a natural ordering of the cycles: Let $Z'=\sum n'_j E_j$ and $Z=\sum n_j E_j$ two cycles. We say that $Z' \leq Z$ if and only if $n'_j \leq n_j$ for any $j$. Moreover, there is also a notion of intersection of cycles given as follows:
\begin{equation*}
    Z' \cdot Z = \left ( \sum n'_i E_i \right) \cdot \left ( \sum n_j E_j \right) = \sum_{i,j} n_i n_j \left( E_i \cdot E_j \right).
\end{equation*}
Denote by 
\begin{equation*}
    \mathcal{Z}_{\text{top}} := \set{Z \neq 0 \, \text{effective cycle}}{ Z \cdot E_j \leq 0 \, \text{for all $j$}}.
\end{equation*}
It is well known that $\mathcal{Z}_{\text{top}}$ has a unique minimal element $Z_{\text{fund}}$, called \emph{the fundamental cycle} (sometimes also called  \emph{the minimal cycle or Artin's fundamental cycle}). See~\cite[Chapter~2]{zbMATH01444575} for more details.

\begin{definition}\label{def:pg}
The {\em geometric genus} $p_g$ of $X$ is defined to be the dimension as a $\mathbb{C}$-vector space of $H^1(\Rs,\Ss{\Rs})$. It is independent of the choice of the resolution.

We say that $(X,x)$  has a \emph{rational singularity}, if the geometric genus of $X$ is zero.
\end{definition}

A \emph{quotient surface singularity} $(X,x)$ is isomorphic to a germ $(\CC^2/\Gamma,0)$, where $\Gamma$ is a small\footnote{with no (pseudo)reflexions. } finite subgroup of $\GLn{2}$.
If $\Gamma$ and $\Gamma'$ are two small finite subgroups of $\GLn{2}$, then $\CC^2/\Gamma$ and $\CC^2/\Gamma'$ are analytically isomorphic if $\Gamma$ and $\Gamma'$ are conjugate.     Any quotient surface singularity is a rational normal surface singularity
 (for more details see \cite[Chapter 7, Section 4]{zbMATH06313585}).

Let $X$ be a representative of the germ $(X,x)$, we may assume that $X\setminus\{x\}$ is connected and $(X,x) \subset (\CC^n,0)$. If $\epsilon>0$ is small enough, the intersection $L=X \cap \mathbb{S}_{\epsilon}^{2n-1}$ is called the \emph{link} of $(X,x)$. Recall that the link does not depend on $\epsilon$.  Moreover, the  link $L$ is a smooth, compact, connected and oriented $3$-manifold.
In singularity theory is common to denote $\pi_1^{\mathrm{loc}}(X,x)=\pi_1(L)$ and call it the \emph{local fundamental group}.
In the case of a normal surface singularity, the local fundamental group satisfies the following characterization.
\begin{proposition}[\cite{MR3323576}]
\label{prop:FundGroupLink}
For a normal surface singularity $(X,x)$,
\begin{enumerate}
\item The  group $\pi_1^{\mathrm{loc}}(X,x)$ is finite if and only if $(X,x)$ is a quotient singularity.
\item If $\pi_1^{\mathrm{loc}}(X,x)$ is infinite, then the link $L$ is an Eilenberg–MacLane space of type $K(\pi_1(L),1)$. Furthermore, the local fundamental group is torsion free.\label{it:infinite}
\end{enumerate}
\end{proposition}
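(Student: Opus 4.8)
We outline a plan following \cite{MR3323576}; compare \cite{Nemethi:NSS}. The idea is to treat the two assertions separately, since (1) is essentially the classical criterion of Prill and Brieskorn for quotient singularities while (2) is a consequence of $3$-manifold topology applied to the link.

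For the ``easy'' half of (1) the plan is a direct homotopy argument: if $(X,x)\cong(\CC^2/\Gamma,0)$ with $\Gamma$ a small finite subgroup of $\GLn{2}$, then $\Gamma$ has no pseudoreflections, so it acts freely on $\CC^2\setminus\{0\}$ and hence on the unit sphere $\mathbb{S}^3\subset\CC^2$; since $\CC^2\setminus\{0\}$ deformation retracts onto $\mathbb{S}^3$, a deleted neighborhood of $x$ is homotopy equivalent to $\mathbb{S}^3/\Gamma$, so $L\simeq\mathbb{S}^3/\Gamma$ and $\pi_1^{\mathrm{loc}}(X,x)=\Gamma$ is finite. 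For the converse I would let $U$ be a small deleted neighborhood of $x$, so $U\simeq L$ and $\pi_1(U)=\pi_1^{\mathrm{loc}}(X,x)=:\Gamma$; assuming $\Gamma$ finite, form the universal covering $\widetilde U\to U$, which is finite and \'etale of degree $\lvert\Gamma\rvert$. Next I would realize $\widetilde U$ analytically as the deleted neighborhood of a normal surface germ $(\widetilde X,\widetilde x)$ by taking $\widetilde X$ to be the normalization of $X$ in the function field of $\widetilde U$: this $\widetilde X$ is finite over $X$, \'etale over $U$, hence smooth away from $\widetilde x$, and its link is the (simply connected) universal cover $\widetilde L$ of $L$. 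Mumford's criterion, that a normal surface singularity with simply connected link is smooth, then forces $(\widetilde X,\widetilde x)\cong(\CC^2,0)$; after linearizing the induced finite $\Gamma$-action by Cartan's lemma one obtains $(X,x)\cong(\CC^2/\Gamma,0)$ with $\Gamma\subset\GLn{2}$, and normality of $X$ forces $\Gamma$ to be small, so $(X,x)$ is a quotient singularity.

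For (2) I would argue as follows. Suppose $\Gamma=\pi_1(L)$ is infinite; then by (1) $L$ is neither $\mathbb{S}^3$ nor a spherical space form. The geometric input is Grauert's criterion: $L$ bounds the negative-definite plumbed $4$-manifold attached to a resolution of $(X,x)$, so $L$ is a graph manifold coming from a connected negative-definite plumbing graph, and such a $3$-manifold is irreducible (the only connected negative-definite plumbing with reducible boundary is the one with boundary $\mathbb{S}^3$, already excluded). Being closed, oriented and irreducible, the Sphere Theorem gives $\pi_2(L)=0$. Passing to the universal cover $\widetilde L$, which is a non-compact $3$-manifold since $\Gamma$ is infinite, one has $\pi_1(\widetilde L)=\pi_2(\widetilde L)=0$ and $\pi_3(\widetilde L)\cong H_3(\widetilde L;\ZZ)=0$ by Hurewicz (a non-compact $3$-manifold has vanishing top homology); since a $3$-manifold has the homotopy type of a $3$-dimensional CW complex, $\widetilde L$ is therefore weakly contractible, hence contractible by Whitehead's theorem. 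Thus $L$ is aspherical, i.e.\ a $K(\pi_1(L),1)$. For torsion-freeness I would invoke that a closed aspherical $3$-manifold gives $\mathrm{cd}_{\ZZ}\pi_1(L)=3<\infty$, whereas any group with a nontrivial finite subgroup has infinite cohomological dimension; hence $\pi_1^{\mathrm{loc}}(X,x)$ is torsion free.

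The main obstacle is the ``finite $\Rightarrow$ quotient'' implication of (1): the genuinely delicate points are equipping the topological universal cover of the deleted neighborhood with a normal analytic structure and extending it to a finite branched cover of $(X,x)$, and then Mumford's theorem that a simply connected link forces smoothness — it is here that the real $3$-manifold topology (through the negative-definiteness of the resolution graph) enters. In (2) the corresponding subtle point, the irreducibility of links of normal surface singularities, again rests on Grauert's negative-definiteness criterion together with the plumbing calculus for graph manifolds; once that is granted, the remainder of (2) is a routine application of the Sphere Theorem, Hurewicz, Whitehead and the cohomological dimension of torsion groups.
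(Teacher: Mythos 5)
The paper offers no proof of this proposition: it is quoted verbatim from Koll\'ar--N\'emethi \cite{MR3323576}, and both statements are classical (part (1) goes back to Mumford, Prill and Brieskorn; part (2) to the irreducibility of singularity links plus the Sphere Theorem). Your outline reconstructs exactly that standard argument, and it is essentially sound: the reduction of ``finite $\Rightarrow$ quotient'' to Mumford's theorem via an analytic realization of the universal cover of the punctured neighbourhood, and the chain irreducibility $\Rightarrow$ $\pi_2(L)=0$ $\Rightarrow$ contractible universal cover $\Rightarrow$ asphericity $\Rightarrow$ finite cohomological dimension $\Rightarrow$ torsion-freeness, is the accepted proof. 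The hard inputs (Grauert--Remmert extension of the finite \'etale cover to a finite normal cover of the germ, Mumford's theorem, and the irreducibility of links of normal surface singularities) are correctly identified and correctly quoted.

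One justification in (1) is off: it is not ``normality of $X$'' that forces $\Gamma\subset\GLn{2}$ to be small. Quotients of $\CC^2$ by arbitrary finite linear groups are already normal --- indeed $\CC^2/\Gamma\cong\CC^2$ when $\Gamma$ is generated by pseudoreflections --- so normality rules out nothing. What gives smallness is that the $\Gamma$-action you linearize is the deck-transformation action of the universal covering $\widetilde{U}\to U$, which is free on $\widetilde{U}\cong\CC^2\setminus\{0\}$; for a finite subgroup of $\GLn{2}$ acting freely off the origin is equivalent to containing no pseudoreflections (the same fact you use, correctly, in the easy direction of (1)). With that repair the sketch is complete at the level of detail one could reasonably ask for.
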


\begin{remark}
\label{remark:linkrhs}
    The link of any normal rational surface singularity is a rational homology sphere (see~\cite[Exercise~1.27 and \S~3.9]{zbMATH01444575}). Moreover, the exceptional divisor of their minimal resolution is the union of finitely many components $E_i$ isomorphic to projective lines $\mathbb{CP}^1$, where $E_i^2 \leq -2$ (see~\cite[\S~3.9]{zbMATH01444575}).
\end{remark}

 A \emph{rational double point singularity} $(X,x)$ is a quotient singularity $(\CC^2/\Gamma,0)$ where $\Gamma$ is a finite subgroup of $\SLn{2}$.
Every finite subgroup of $\SLn{2}$ is small and it is conjugate to a subgroup of $SU(2)$, thus, rational double point singularities are isomorphic to germs of the form $(\CC^2/\Gamma,0)$ with $\Gamma=\mathsf{C}_l,\mathsf{BD}_{2r},\mathsf{BT},\mathsf{BO},\mathsf{BI}$, and their links are the rational homology $3$-spheres $L=\mathbb{S}^3/\Gamma$ mentioned in Section~\ref{sec:s3.G}.
Rational double point singularities are the only quotient surface singularities that embed in $\CC^3$.
For more details on rational double point singularities see for instance \cite{Lamotke:RSIS}.

\begin{remark}
In the case of rational double point singularities, the fundamental cycle is well known (see~\cite[Example~7.2.5]{zbMATH06313585}). Table~\ref{tb:zfund} contains the coefficients (without any particular ordering) of the fundamental cycle for rational double point singularities.
\begin{table}[h]
\setlength{\extrarowheight}{3pt}
\begin{tabular}{|c | c |}
\hline
Singularity  & Coefficients of $Z_{\text{fund}}$ \\
\hline
$\CC^2/\mathsf{C}_l$ &  $1, 1, \dots, 1$ {\tiny ($l-1$ times)}\\[3pt]
\hline
$\CC^2/\mathsf{BD}_{2r}$ &  $1,1,1$ and $2, \dots, 2$  {\tiny ($r-1$ times)}  \\[3pt]
\hline
$\CC^2/\mathsf{BT}$ & $1,1,2,2,2,3$  \\[3pt]
\hline
$\CC^2/\mathsf{BO}$ & $1,2,2,2,3,3,4$ \\[3pt]
\hline
$\CC^2/\mathsf{BI}$ & $2,2,3,3,4,4,5,6$  \\[3pt]
\hline
\end{tabular}
\caption{Coefficients of $Z_{\text{fund}}$ of rational double point singularities}\label{tb:zfund}
\end{table}
\end{remark}

Let $\mathcal{O}=\mathcal{O}_{\CC^{n+1},0}$ be the ring of germs of holomorphic functions and let $f\in\mathcal{O}$ with an isolated critical point at $0$.
Let $\mu=\mu(f)$ be the Milnor number of $f$ and let $(X,0)$ be the germ of the isolated hypersurface singularity defined by $f$, i.~e., $X=f^{-1}(0)$.
The \emph{spectrum} $sp(X)$ of $(X,0)$ is a notion introduced by Steenbrink \cite{Steenbrink:SHS}
using the mixed Hodge structure on the cohomology of the Milnor fibre associated to a complex hypersurface singularity together with the monodromy.
It is a (multi)-set of $\mu$ rational numbers called
spectral numbers: $sp(X)=\{\alpha_1\leq\alpha_2\leq\alpha_3\leq\dots\leq\alpha_\mu\}$.
The spectrum is an invariant of a singularity: let $f,g\in\mathcal{O}$, $X=f^{-1}(0)$ and $Y=g^{-1}(0)$; if $f$ and $g$ are \emph{right equivalent} $f\sim g$, then $sp(X)=sp(Y)$.
In fact, if $f \sim u\cdot g$, where $u$ is a unit of $\mathcal{O}$, we say that $f$ and $g$ are \emph{contact equivalent} and then $sp(X)=sp(Y)$.
The spectrum has a lot of deeper properties, for instance: if $\alpha\in sp(X)$, then $\lambda:= \exp(2\pi i\alpha)$
is an eigenvalue of the cohomological monodromy transformation.
In other words, the spectral numbers $\alpha_1,\dots,\alpha_\mu$ are specific logarithms of the monodromy eigenvalues.
The spectra of rational doble point singularities are given in Table~\ref{tb:spf}, where in the pair $(s,m)$, $s$ is the spectral number and $m$ its multiplicity see \cite[p.~165]{Steenbrink:SHS}.
\begin{table}[H]
\setlength{\extrarowheight}{3pt}
\begin{tabular}{|l | l |}
\hline
Singularity  & Spectrum \\
\hline
$\CC^2/\mathsf{C}_l$ &  $(\frac{1}{l},1),\dots, (\frac{l-1}{l},1)$ \\[3pt]\hline
$\CC^2/\mathsf{BD}_{2r}$, $r=2l$ (even)&  $(\frac{1}{2(r+1)},1),\dots,(\frac{2l-1}{2(r+1)},1),(\frac{1}{2},2),(\frac{2l+3}{2(r+1)},1),\dots,(\frac{2r+1}{2(r+1)},1)$  \\[3pt]\hline
$\CC^2/\mathsf{BD}_{2r}$, $r=2l-1$ (odd)&  $(\frac{1}{2(r+1)},1),\dots,(\frac{2l-1}{2(r+1)},1),(\frac{1}{2},1),(\frac{2l+3}{2(r+1)},1),\dots,(\frac{2r+1}{2(r+1)},1)$  \\[3pt]\hline
$\CC^2/\mathsf{BT}$ & $(\frac{1}{12},1),(\frac{4}{12},1),(\frac{5}{12},1),(\frac{7}{12},1),(\frac{8}{12},1),(\frac{11}{12},1)$  \\[3pt]\hline
$\CC^2/\mathsf{BO}$ & $(\frac{1}{18},1),(\frac{5}{18},1),(\frac{7}{18},1),(\frac{9}{18},1),(\frac{11}{18},1),(\frac{13}{18},1),(\frac{17}{18},1)$ \\[3pt]\hline
$\CC^2/\mathsf{BI}$ & $(\frac{1}{30},1),(\frac{7}{30},1),(\frac{11}{30},1),(\frac{13}{30},1),(\frac{17}{30},1),(\frac{19}{30},1),(\frac{23}{30},1),(\frac{29}{30},1)$  \\[3pt]\hline
\end{tabular}
\caption{Spectrum of rational double point singularities}\label{tb:spf}
\end{table}

\subsection{Recovering the spectrum of rational double point singularities from the CCS-numbers}

Let $(X,x)$ be a normal surface singularity. Its link $L$ is a compact oriented $3$-manifold, denote by $\Gamma=\pi_1(L)$ its fundamental group.
Let $\rho \colon \Gamma \to \GLn{n}$ be a representation.
By \eqref{eq:ccs.BG.varsigma} the second CCS-number $\widehat{c}_{\rho,2}([L])$ of $\rho$ can be computed as $\widehat{c}_2(\rho)(\varsigma)$. By Remark~\ref{rem:G.M.ccs} the Cheeger-Chern-Simons class $\widehat{c}_2(\rho)$ only depends on the representation $\rho$, and the homology class $\varsigma$ only depends on $L$ (see Subsection~\ref{ssec:Thomas}).
Thus, all the second CCS-numbers $\widehat{c}_{\rho,2}([L])$ can be determined using cohomological information of the classifying space and the homology class $\varsigma$ coming from $L$.
We would like to obtain an invariant coming from the topology of $L$ and from the singularity. For this, we introduce a variation of the second CSS-number $\widehat{c}_{\rho,2}([L])$
for normal surface singularities which takes into account information from a resolution of $X$.

\begin{definition}
\label{def:Xi.Fund}
Let $(X,x)$ be a normal surface singularity and $L$ its link. Let $\pi \colon \Rs \to X$ be a resolution and $Z_{\text{fund}}=\sum n_iE_i$ its fundamental cycle.
Let $\rho \colon\pi_1(L) \to GL(n,\CC)$ be a representation. The invariant $\Xi_{\rho}(X,\Rs)$ is given by
\begin{equation*}
   \Xi_{\rho}(X,\Rs):= \left(\frac{1+\sum n_i^2}{ 1+\sum n_i}\right)\widehat{c}_{\rho,2}([L]).
\end{equation*}
\end{definition}

\begin{remark}
Note that the definition of  $\Xi_{\rho}(X,\Rs)$ depends on the resolution.
\end{remark}

Our main result in this section is the following Theorem~\ref{th:C1C2SonSpf}, which tells us how to recover the spectrum of a rational double point singularity
from the first CCS-numbers of the irreducible representations of its local fundamental group and the invariant $\Xi_{\rho}(X,\Rs)$ for the minimal resolution and
the natural representation.

\begin{theorem}
\label{th:C1C2SonSpf}
Let $(X,x)\cong(\CC^2/\Gamma,0)$ be the germ of a rational double point surface singularity. Let $\pi \colon \Rs_{\mathrm{min}} \to X$ be the minimal resolution.
Then, the spectrum of $(X,x)$ and its multiplicities can be obtained from the non-zero first CCS-numbers of the irreducible representations of $\Gamma=\pi_1^{\mathrm{loc}}(X,x)$ and
$\Xi_{\alpha_{\mathrm{Nat}}}(X,\Rs_{\mathrm{min}})$, where $\alpha_{\mathrm{Nat}} \colon \Gamma \to \mathrm{SL}(2,\CC)$ is the natural representation.
\end{theorem}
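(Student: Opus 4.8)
The plan is to show that the first CCS-numbers of the irreducible representations of $\Gamma$, together with the single number $\Xi_{\alpha_{\mathrm{Nat}}}(X,\Rs_{\mathrm{min}})$, determine $\Gamma$ up to conjugacy in $\SLn{2}$; since $(X,x)\cong(\CC^2/\Gamma,0)$ is then determined up to analytic isomorphism, so are its spectrum and the multiplicities, which are finally read off Table~\ref{tb:spf}. Concretely, from the first CCS-numbers I will recover the abelianization $\mathrm{Ab}(\Gamma)\cong H_1(L;\ZZ)$, and from $\Xi_{\alpha_{\mathrm{Nat}}}(X,\Rs_{\mathrm{min}})$ the Coxeter number $h$ of the associated ($ADE$) root system; the pairs $(\mathrm{Ab}(\Gamma),h)$ take pairwise distinct values as $\Gamma$ runs over $\mathsf{C}_l,\mathsf{BD}_{2r},\mathsf{BT},\mathsf{BO},\mathsf{BI}$, so this is all that is needed.

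For the first step: by Lemma~\ref{lem:hc1.det} and formula \eqref{eq:df}, $\widehat{c}_{\alpha,1}$ is, through $\tfrac{1}{2\pi i}\log$, the character $\det(\alpha)_{\mathrm{Ab}}$ of $\mathrm{Ab}(\Gamma)$. As $\alpha$ ranges over the irreducible representations of Subsection~\ref{ssec:Gamma} these characters exhaust the Pontryagin dual of $\mathrm{Ab}(\Gamma)$ (they already include all one-dimensional representations), so the data of Table~\ref{tb:fcn} recovers $\mathrm{Ab}(\Gamma)\cong H_1(L;\ZZ)$. By Table~\ref{tb:AbG} this identifies $\Gamma$ uniquely except for the coincidences $\mathrm{Ab}(\Gamma)\cong\ZZ_2$ ($\mathsf{C}_2$ or $\mathsf{BO}$), $\mathrm{Ab}(\Gamma)\cong\ZZ_3$ ($\mathsf{C}_3$ or $\mathsf{BT}$), $\mathrm{Ab}(\Gamma)\cong\ZZ_4$ ($\mathsf{C}_4$ or $\mathsf{BD}_{2r}$ with $r$ odd), and $\mathrm{Ab}(\Gamma)\cong\ZZ_2\oplus\ZZ_2$ ($\mathsf{BD}_{2r}$ with $r$ even, the value of $r$ still undetermined).

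For the second step I compute $\Xi_{\alpha_{\mathrm{Nat}}}(X,\Rs_{\mathrm{min}})$ from Definition~\ref{def:Xi.Fund}, feeding in the coefficients of $Z_{\text{fund}}$ from Table~\ref{tb:zfund} and the value $\widehat{c}_{\alpha_{\mathrm{Nat}},2}([L])$ from Table~\ref{tb:scn}. For $\Gamma=\mathsf{BD}_{2r},\mathsf{BT},\mathsf{BO},\mathsf{BI}$ the natural representation is irreducible, so $\widehat{c}_{\alpha_{\mathrm{Nat}},2}([L])$ is read directly from Table~\ref{tb:scn} and a short arithmetic gives $\Xi_{\alpha_{\mathrm{Nat}}}(X,\Rs_{\mathrm{min}})=\tfrac{1}{2(r+1)},\ \tfrac{1}{12},\ \tfrac{1}{18},\ \tfrac{1}{30}$ respectively. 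For $\Gamma=\mathsf{C}_l$ the natural representation decomposes as $\alpha_2\oplus\alpha_l$, so by the Whitney-sum formula \eqref{eq:wsf.ccs} together with the vanishing of all $\widehat{c}_{\alpha_j,2}([L])$ in Table~\ref{tb:scn} one has $\widehat{c}_{\alpha_{\mathrm{Nat}},2}([L])=(\widehat{c}_{\alpha_2,1}*\widehat{c}_{\alpha_l,1})([L])$, while the fundamental-cycle factor in Definition~\ref{def:Xi.Fund} equals $1$ in this case; evaluating this product — which under \eqref{eq:ccs-c1} is the self-linking of a generator of $H_1(L;\ZZ)$ in the lens space $L=\SSS^3/\mathsf{C}_l$, equivalently $\tilde{\xi}_{\alpha_2}(D)+\tilde{\xi}_{\alpha_l}(D)$ — gives $\Xi_{\alpha_{\mathrm{Nat}}}(X,\Rs_{\mathrm{min}})=\pm\tfrac{1}{l}$. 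In every case $\Xi_{\alpha_{\mathrm{Nat}}}(X,\Rs_{\mathrm{min}})$ has order exactly $h$ in $\CC/\ZZ$, where $h$ is the Coxeter number of the corresponding root system (for the non-cyclic singularities it is in fact the smallest spectral number of Table~\ref{tb:spf}).

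It remains to conclude. The order of $\Xi_{\alpha_{\mathrm{Nat}}}(X,\Rs_{\mathrm{min}})$ in $\CC/\ZZ$ recovers $h$, and the pairs $(\mathrm{Ab}(\Gamma),h)$ — namely $(\ZZ_l,l)$ for $\mathsf{C}_l$, $(\ZZ_2\oplus\ZZ_2,\,2(r+1))$ for $\mathsf{BD}_{2r}$ with $r$ even, $(\ZZ_4,\,2(r+1))$ for $\mathsf{BD}_{2r}$ with $r$ odd, $(\ZZ_3,12)$ for $\mathsf{BT}$, $(\ZZ_2,18)$ for $\mathsf{BO}$, $(\{1\},30)$ for $\mathsf{BI}$ — are pairwise distinct (for instance $\mathsf{BO}$ is told apart from $\mathsf{BD}_{16}$, both with $h=18$, by the abelianization $\ZZ_2$ versus $\ZZ_2\oplus\ZZ_2$, and $\mathsf{BT}$ from $\mathsf{C}_3$ by $h=12\neq 3$). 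Hence the two pieces of data pin down $\Gamma$, and with it $(X,x)$ and, via Table~\ref{tb:spf}, its spectrum with multiplicities. The only step that is not a direct table look-up, and the one I expect to be the main technical point, is the evaluation of the differential-cohomology product $\widehat{c}_{\alpha_2,1}*\widehat{c}_{\alpha_l,1}$ on $[L]$ in the cyclic case: it reduces to the classical computation of the linking form of $\SSS^3/\mathsf{C}_l$ (equivalently, of the Dedekind-type sum giving the reduced $\eta$-invariants of the twisted Dirac operator), and the resulting sign in $\pm\tfrac{1}{l}$ is irrelevant since only the order $l$ enters the recipe.
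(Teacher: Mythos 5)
Your argument proves a much weaker statement than the one the theorem is actually making, and in doing so it loses the entire mathematical content. What you show is that the pair (first CCS-numbers, $\Xi_{\alpha_{\mathrm{Nat}}}(X,\Rs_{\mathrm{min}})$) is a \emph{complete invariant}: it pins down $\Gamma$ up to conjugacy, hence the analytic type of $(X,x)$, after which the spectrum is ``looked up'' in Steenbrink's table. Under that reading the theorem is essentially vacuous --- \emph{any} complete invariant of the singularity ``determines'' the spectrum in this sense. The intended content (stated explicitly in the introduction: ``we give a recipe to recover the spectrum\dots'', and flagged there as an ``empirical fact'') is a direct numerical coincidence: each distinct non-zero first CCS-number $\widehat{c}_{\alpha,1}(\bar{c})$ (resp.\ $\widehat{c}_{\alpha,1}(\zeta_l)$) \emph{is itself} a spectral number with multiplicity one; $\Xi_{\alpha_{\mathrm{Nat}}}(X,\Rs_{\mathrm{min}})=\tfrac{1}{m}$ \emph{is itself} the smallest spectral number; and the remaining spectral numbers are the multiples $(2i+1)\cdot\Xi_{\alpha_{\mathrm{Nat}}}$ (binary dihedral case) or $k\cdot\Xi_{\alpha_{\mathrm{Nat}}}$ with $\gcd(k,m)=1$ (binary polyhedral cases). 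The paper's proof is precisely the case-by-case verification of these identities against Table~\ref{tb:spf}, including the multiplicity count (e.g.\ the spectral number $\tfrac12$ occurring twice for $\mathsf{BD}_{2r}$ with $r$ even, once from the first CCS-numbers and once as $\tfrac{r+1}{2(r+1)}$). None of this appears in your argument, so the theorem as meant --- and as used later to motivate Definition~\ref{def:top.spec} of the topological spectrum --- is not established.

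Two secondary points. First, your identification step itself is only sketched at its one genuinely nontrivial moment: the evaluation of $\widehat{c}_{\alpha_2,1}*\widehat{c}_{\alpha_l,1}$ on $[L]$ for the lens space $\SSS^3/\mathsf{C}_l$ is asserted to be $\pm\tfrac{1}{l}$ via the linking form but not carried out, and your whole disambiguation of $\mathsf{C}_l$ from the binary polyhedral groups with the same abelianization hangs on it. (In the paper's recipe the cyclic case never needs $\Xi_{\alpha_{\mathrm{Nat}}}$ at all: the first CCS-numbers $\tfrac{1}{l},\dots,\tfrac{l-1}{l}$ already \emph{are} the spectrum of $A_{l-1}$.) Second, the observation you do make --- that $\Xi_{\alpha_{\mathrm{Nat}}}(X,\Rs_{\mathrm{min}})$ has order equal to the Coxeter number $h$ of the associated root system, i.e.\ that the denominator of the smallest spectral number is $h$ --- is a nice reformulation of part of the coincidence, but on its own it recovers only one spectral number, not the full spectrum with multiplicities.
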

\begin{proof}
Let $\pi\colon \Rs_{\mathrm{min}} \to X$ be the minimal resolution. Denote by $Z_{\text{fund}}=\sum n_iE_i$ its fundamental cycle. Denote by $t_\Gamma =\frac{1+\sum n_i^2}{ 1+\sum n_i}$.
Let $\alpha_{\mathrm{Nat}} \colon \Gamma \to \mathrm{SL}(2,\CC)$ be the natural representation.
In the following table we have computed the rational number $t_\Gamma$ and the invariant $\Xi_{\alpha_{\mathrm{Nat}}}(X,\Rs_{\mathrm{min}})$ from Table~\ref{tb:zfund} and Table~\ref{tb:red.xi} respectively:
\begin{table}[h]
\setlength{\extrarowheight}{3pt}
\begin{tabular}{ |c |c| c | c | c | c | c | c|}
  \hline
  & $\mathsf{BD}_{2r}$ & $\mathsf{BT}$ & $\mathsf{BO}$ & $\mathsf{BI}$\\
  \hline
  $t_\Gamma$& $\frac{2r}{r+1}$  & $\frac{24}{12}=2$ & $\frac{48}{18}$ &  $\frac{120}{30}=4$ \\ \hline
  $\Xi_{\alpha_{\mathrm{Nat}}}(X,\Rs_{\mathrm{min}})$& $\frac{1}{2(r+1)}$  & $\frac{1}{12}$ & $\frac{1}{18}$ &  $\frac{1}{30}$ \\ \hline
\end{tabular}
\end{table}

We can recover the spectral numbers given in Table~\ref{tb:spf} as follows:
\begin{enumerate}
\item There are two cases:
\begin{description}
 \item[$\Gamma=\mathsf{C}_l$] Each different non-zero first CCS-number of an irreducible representation of $\mathsf{C}_l$, with respect to the generator $\zeta_l$ of $H_1(\BG[\mathsf{C}_l];\ZZ)$, is a spectral number with multiplicity $1$.
 \item[$\Gamma\neq\mathsf{C}_l$] Each different non-zero first CCS-number of an irreducible representation of $\Gamma$, with respect to the class $\bar{c}$ of $H_1(\BG[\Gamma];\ZZ)$, is a spectral number with multiplicity $1$ (repeated first CCS-numbers are counted just one time).
 \end{description}
 \item If $\Xi_{\alpha_{\mathrm{Nat}}}(X,\Rs_{\mathrm{min}})\neq0$, it is of the form $\frac{1}{m}$ for some integer $m$ and it is a spectral number with multiplicity $1$.
 \item One needs to consider two cases:
 \begin{description}
  \item[$\mathsf{BD}_{2r}$] The multiples $(2i+1)\cdot\Xi_{\alpha_{\mathrm{Nat}}}(X,\Rs_{\mathrm{min}})$ with $i=1,\dots,r$ are the rest of the spectral numbers, each one with multiplicity $1$.
  \item[$\mathsf{BT}$, $\mathsf{BO}$ or $\mathsf{BI}$] The multiples $k\cdot\Xi_{\alpha_{\mathrm{Nat}}}(X,\Rs_{\mathrm{min}})$ with $1<k<m$ such that $k$ and $m$ are coprime, are the rest of the spectral numbers, each one with multiplicity $1$.
 \end{description}
\end{enumerate}
Let us check this case by case.
\begin{description}
\item[Case $\mathsf{C}_l$] From Table~\ref{tb:scn} the first CCS-numbers of non-trivial irreducible representations of $C_l$, with respect to the generator $\zeta_l$, are $\frac{1}{l},\dots, \frac{l-1}{l}$, which are precisely the spectrum. All of them with multiplicity $1$.
\item[Case $\mathsf{BD}_{2r}$]From Table~\ref{tb:scn} the non-zero first CCS-numbers of the irreducible representations of $\mathsf{BD}_{2r}$, with respect to $\bar{c}$,
are equal to $\frac{1}{2}$, which we take with multiplicity $1$.
Also the number $\Xi_{\alpha_{\mathrm{Nat}}}(X,\Rs_{\mathrm{min}})=\frac{1}{2(r+1)}$ is a spectral number with multiplicity $1$.
Taking the multiples $(2i+1)\cdot\Xi_{\alpha_{\mathrm{Nat}}}(X,\Rs_{\mathrm{min}})$ with $i=1,\dots,r$ we get: $\frac{3}{2(r+1)},\frac{5}{2(r+1)},\dots,\frac{2r+1}{2(r+1)}$ which are the rest of the spectral numbers, all taken with multiplicity $1$. Thus, all the spectral numbers have multiplicity $1$, except for the case $r$ even ($r=2l$), where the number $\frac{1}{2}$ has multiplicity $2$, one coming
from the first CSS-numbers, and the other one from $\frac{2l+1}{2(r+1)}=\frac{r+1}{2(r+1)}=\frac{1}{2}$.
\item[Case $\mathsf{BT}$] From Table~\ref{tb:scn} the non-zero first CCS-numbers of the irreducible representations of $\mathsf{BT}$, with respect to $\bar{c}$, are $\frac{4}{12}$ and $\frac{8}{12}$, which are spectral numbers, each one taken with multiplicity $1$.
Also the number $\Xi_{\alpha_{\mathrm{Nat}}}(X,\Rs_{\mathrm{min}})=\frac{1}{12}$ is a spectral number with multiplicity $1$. Taking the multiples $k\cdot\frac{1}{12}$ such that $1<k<12$ and $k$ is coprime with $12$ we get: $\frac{5}{12},\frac{7}{12},\frac{11}{12}$ which are the rest of the spectral numbers, all taken with multiplicity $1$.
Thus, all the spectral numbers have multiplicity $1$.
\item[Case $\mathsf{BO}$]
From Table~\ref{tb:scn} the non-zero first CCS-numbers of the irreducible representations of $\mathsf{BT}$, with respect to $\bar{c}$, are equal to $\frac{1}{2}=\frac{9}{18}$ which is a spectral number, which we take with multiplicity $1$.
Also $\Xi_{\alpha_{\mathrm{Nat}}}(X,\Rs_{\mathrm{min}})=\frac{1}{18}$ is a spectral number with multiplicity $1$. Taking the multiples $k\cdot\frac{1}{18}$ such that $1<k<18$ and $k$ is coprime with $18$ we get: $\frac{5}{18},\frac{7}{17},\frac{11}{18},\frac{13}{18},\frac{17}{18}$ which are the rest of the spectral numbers, all taken with multiplicity $1$.
Thus, all the spectral numbers have multiplicity $1$.
\item[Case $\mathsf{BI}$] All the first CCS-numbers are zero. $\Xi_{\alpha_{\mathrm{Nat}}}(X,\Rs_{\mathrm{min}})=\frac{1}{30}$ is a spectral number with multiplicity $1$.
Taking the multiples $k\cdot\frac{1}{30}$ such that $1<k<30$ and $k$ is coprime with $30$ we get: $\frac{7}{30},\frac{11}{30},\frac{13}{30},\frac{17}{30},\frac{19}{30},\frac{23}{30},\frac{29}{30}$ which are all the spectral numbers, taken with multiplicity $1$.
\end{description}
This proves the assertion.
\end{proof}

\begin{remark}
Notice that for $\Gamma=\mathsf{BT},\mathsf{BO},\mathsf{BI}$ and $\Gamma=\mathsf{BD}_{2r}$ with $r$ even, the homology class $\bar{c}$ is a generator of $H_1(\BG[\Gamma];\ZZ)$ except for the case of
$\Gamma=\mathsf{BD}_{2r}$ with $r$ odd. If in this case we take the first CCS-numbers with respect to the generator $\bar{b}$, we get the numbers $\frac{1}{4}$ and $\frac{3}{4}$
which are not spectral numbers.
\end{remark}

\begin{remark}\label{rk:Xi.xi}
If $(\CC^2/\Gamma,0)$ is a rational double point singularity with link $L$, from Table~\ref{tb:fcn} the natural representation $\alpha_{\mathrm{Nat}} \colon \Gamma \to \mathrm{SL}(2,\CC)$ is topologically trivial.
By Theorem~\ref{th:C2.APS} $\widehat{c}_{\alpha_{\mathrm{Nat}},2}([L])=\tilde{\xi}_{\alpha_{\mathrm{Nat}}}(D)$, where $D$ is the Dirac operator of $L$.
Thus $\Xi_{\alpha_{\mathrm{Nat}}}(X,\Rs)=\left(\frac{1+\sum n_i^2}{ 1+\sum n_i}\right)\tilde{\xi}_{\alpha_{\mathrm{Nat}}}(D)$.

A natural question is if one gets interesting invariants replacing in $\Xi_{\rho}(X,\Rs)$ the Dirac operator by other (self-adjoint) differential operators.
\end{remark}

\subsection{McKay correspondence}
In this section we recall basics on McKay correspondence. We assume basic familiarity with
dualizing sheaves, modules and normal surface singularities,  see~\cite{zbMATH01194481,zbMATH01444575,zbMATH06313585} for more details.

Let $X$ be a normal variety. Let $\Homs_{\Ss{X}}(\bullet,\bullet)$ and $\Exts^{\ i}_{\Ss{X}}(\bullet,\bullet)$ be the sheaf theoretic Hom and Ext functors (see \cite{Hartshorne:AlgGeom}). 
An $\Ss{X}$-module $M$ is {\em indecomposable} if it cannot be written as a direct sum of two non-trivial submodules.
The dual of an $\Ss{X}$-module $M$ is denoted by $M^{\smvee}:=\Homs_{\Ss{X}}(M,\Ss{X})$. An $\Ss{X}$-module $M$ is called \emph{reflexive} if the natural homomorphism from $M$ to $M^{\smvee \smvee}$ is an isomorphism. 

Let $(X,x)$ be the germ of a normal surface singularity and $\pi \colon \Rs \to X$
be a resolution. Recall the following definition of full sheaves as in~\cite{zbMATH04081723}.
\begin{definition}
An $\Ss{\Rs}$-module $\Sf{M}$ is called \emph{full} if there is a reflexive $\Ss{X}$-module $M$ such that 
$\Sf{M} \cong \left(\pi^* M\right)^{\smvee \smvee}$. We call $\Sf{M}$ the full sheaf associated to $M$.
\end{definition}

By Artin and Verdier \cite{Artin-Verdier:RMORDP}, the classical \emph{McKay correspondance} can be stated as follows:
for rational double point singularities there is a one-to-one correspondence between,
\begin{itemize}
    \item non-trivial irreducible representations of $\pi_1^{\mathrm{loc}}(X,x)$,
    \item  non-trivial indecomposable reflexive $\Ss{X}$-modules,
    \item irreducible components of the exceptional divisor of the minimal resolution of $X$.
\end{itemize}
\begin{remark}
    In general, given any $(X,x)$ normal surface singularity and $\rho$ a representation of $\pi_1^{\mathrm{loc}}(X,x)$, by the Riemann-Hilbert correspondence we obtain a reflexive $\Ss{X}$-module. Such a module depends on the representation, therefore we will denote it by $M_\rho$. The reader may see \cite{GUSTAVSEN2008851} for more details.
\end{remark}
If the singularity is not a rational double point singularity, we need to impose an additional hypothesis to produce a classification analogous to the McKay correspondence. For rational singularities this property is called \emph{speciality}, this has been done by several people: Esnault~\cite{zbMATH03880862}, Wunram\cite{zbMATH03997956}, Riemenschneider~\cite{10.14492/hokmj/1350657526}, Iyama and Wemyss~\cite{zbMATH05700565}. For Gorenstein singularities, it is called \emph{cohomological speciality}
by Fernández de Bobadilla and Romano-Velázquez~\cite{BoRo}.

\begin{definition}
Let $(X,x)$ be the germ of a normal surface singularity. Let $M$ be a reflexive $\Ss{X}$-module of rank $r$. We say that $M$ is
\begin{enumerate}
    \item \emph{special} if $\Ext_{\Ss{X}}^1(M,\Ss{X})=0$,
    \item \emph{cohomologically special} if for any resolution $\pi \colon \Rs \to X$ the full sheaf $\Sf{M}$ associated to $M$ satifies $\dimc{R^1 \pi_* \left(\Sf{M}^{\smvee}\right)} = rp_g$, where $p_g$ is the geometric genus (Definition~\ref{def:pg}).
\end{enumerate}
\end{definition}

For representations of the local fundamental group, we have the following definition.
\begin{definition}
    Let $(X,x)$ be a normal surface singularity and $\rho$ be a representation of $\pi_1^{\mathrm{loc}}(X,x)$. Let $M_\rho$ be the reflexive $\Ss{X}$-module given by $\rho$ and the Riemann-Hilbert correspondence. We say that the representation $\rho$ is
    \begin{enumerate}
        \item \emph{special} if $M_\rho$ is special.
        \item \emph{cohomologically special} if $M_\rho$ is cohomologically special.
    \end{enumerate}
\end{definition}

\begin{remark}\label{remark:specialRDP}
    By~\cite{zbMATH03997956}, in rational double point singularities any representation (reflexive module) is a special representation (special reflexive module). Moreover, by~\cite{BoRo} in this case the notions of special and cohomologically special agree.
\end{remark}

Let $(X,x)$ be the germ of a rational double point surface singularity and consider the natural representation $\alpha_{\mathrm{Nat}} \colon \pi_1^{\mathrm{loc}}(X,x) \to SL(2,\CC)$.
By Theorem~\ref{th:C1C2SonSpf} $\Xi_{\alpha_{\mathrm{Nat}}}(X,\Rs_{\mathrm{min}})$ is always a spectral number.
Furthermore, for any irredicible representation $\alpha \colon \pi_1^{\mathrm{loc}}(X,x) \to \GLn{n}$
if its first Cheeger-Chern-Simons class $\widehat{c}_1(\alpha)$ is not zero, then the corresponding first CCS-number $\widehat{c}_1(\alpha)(\bar{c})$ is always a spectral number.
We would like to define a new invariant for rational or Gorenstein surface singularities $(X,x)$ using the Cheeger-Chern-Simons classes $\widehat{c}_i(\alpha)$ with $i=1,2$ of the
irreducible representations $\text{Irr}(\pi_1^{\mathrm{loc}}(X,x))$ of $\pi_1^{\mathrm{loc}}(X,x)$, but in order to define an invariant of the singularity and not only of the local fundamental group, we will use  only special or cohomologycally special representations (see Example~\ref{ex:G1.G2} below). The following definition is a natural consequence of this remark.
\begin{definition}\label{def:top.spec}
Let $(X,x)$ be the germ of a rational surface singularity and $L$ its link. The \emph{topological spectrum of $(X,x)$} is the following set
\begin{align*}
    \mathrm{T Spf}(X)&=\{0\neq\widehat{c}_{\alpha,1}\in H^1(L;\CC/\ZZ)\, | \, \alpha \in \text{Irr}(\pi_1^{\mathrm{loc}}(X,x)), \text{$\alpha$ is special} \}.
\end{align*}
Moreover, if $(X,x)$ is a quotient singularity, the \emph{topological spectrum of $(X,x)$} is the following set
\begin{align*}
    \mathrm{T Spf}(X)&=\{\widehat{c}_{\alpha,1}\in H^1(L;\CC/\ZZ)\, | \, \alpha \in \text{Irr}(\pi_1^{\mathrm{loc}}(X,x)), \text{$\alpha$ is special and $\widehat{c}_1(\alpha)\neq 0$ } \}\\
    &\bigcup \{\widehat{c}_{\alpha_{\mathrm{Nat}},2}  \, | \, \text{$\alpha_{\mathrm{Nat}}$ is the natural representation}\}.
\end{align*}  
If $(X,x)$ is a Gorenstein singularity, the \emph{topological spectrum of $(X,x)$} is the following set
\begin{align*}
    \mathrm{T Spf}(X)&=\{0\neq\widehat{c}_{\alpha,1}\in H^1(L;\CC/\ZZ)\, | \, \alpha \in \text{Irr}(\pi_1^{\mathrm{loc}}(X,x)), \text{$\alpha$ is cohomologically special} \}.
\end{align*}
\end{definition}

The following example shows that the topological spectrum of a singularity does depend on the singularity, not only on the local fundamental group.
\begin{example}\label{ex:G1.G2}
    Consider the following cyclic subgroups of order $3$:
    \begin{equation*}
        \Gamma_1= \left\langle \left( \begin{matrix} \zeta_3& 0\\ 0& \zeta_3 \end{matrix}\right)  \right\rangle \subset \GLn{2}
        \quad \text{and} \quad 
        \Gamma_2=\left \langle \left(\begin{matrix} \zeta_3& 0\\ 0& \zeta_3^2 \end{matrix}\right) \right\rangle \subset \SLn{2}.
    \end{equation*} 
    Denote by
    \begin{equation*}
        (X_1,x_1) := (\CC^2 / \Gamma_1,0) \quad \text{and} \quad (X_2,x_2) := (\CC^2 / \Gamma_2,0),
    \end{equation*}
    the corresponding quotient singularities.
    The germ $(X_2,x_2)$ is a rational double point singularity. Let $L_j$ be the link of the singularity $(X_j,x_j)$ for $j=1,2$. In both cases, the links are $3$-dimensional lens spaces. Moreover, $\pi_1(L_j)\cong \mathsf{C}_3$ for $j=1,2$. Therefore $\pi_1(L_1)$ and $\pi_1(L_2)$ have the same irreducible non-trivial representations, namely $\alpha_2$  and $\alpha_3$. By Artin and Verdier \cite{Artin-Verdier:RMORDP} and Wunram~\cite{zbMATH03997956} we have that:
    \begin{itemize}
        \item for the singularity $(X_1,x_1)$ there is only one non-trivial special irreducible representation and,
        \item for the singularity $(X_2,x_2)$ both  $\alpha_2$  and $\alpha_3$ are special representations.
    \end{itemize} 
    Therefore, $\mathrm{T Spf}(X_1)$ has only one cohomology class but $ \mathrm{T Spf}(X_2)$ has two cohomology classes. Hence, even if $\pi_1(L_1)$ and $\pi_1(L_2)$ have the same representation theory, the topological spectrums of $(X_1,x_1)$ and $(X_2,x_2)$ are indeed different. They depend on the singularity.
\end{example}

By Theorem~\ref{th:C1C2SonSpf} and Remark~\ref{remark:specialRDP} for rational double point singularities $X$ we can recover some
spectral numbers from the topological spectrum $\mathrm{T Spf}(X)$ evaluating in particular homology classes: $\widehat{c}_{\alpha,1}(\bar{c})$ and $\widehat{c}_{\alpha_{\mathrm{Nat}},2}([L])$, and taking appropriate multiples of the obtained numbers.

\begin{remark}
Let $(\CC^2/\Gamma,0)$ be a quotient singularity ($\Gamma\subset\GLn{2}$).
By Wunram~\cite{zbMATH03997956}, the coefficients $n_1, \dots, n_s$ of the fundamental cycle on the minimal resolution
$\Rs_{\mathrm{min}}$ of $X$ coincide with the dimensions of the non-trivial special irreducible representations $\alpha_1, \dots, \alpha_s$ of $\Gamma$.
Thus, for quotient singularities the multiplicative factor $\frac{1+\sum n_i^2}{ 1+\sum n_i}$ that appears in $\Xi_{\rho}(\Rs_{\mathrm{min}},D)$ (Definition~\ref{def:Xi.Fund}) for the minimal resolution
$\Rs_{\mathrm{min}}$ depends on the group (dimensions of irreducible representations) and on the singularity (only the special ones).

In the case of a rational double point singularity $(X,x)\cong(\CC^2/\Gamma,0)$ ($\Gamma\subset\SLn{2}$), all the non-trivial irreducible representations are special (Remark~\ref{remark:specialRDP}) and
it is well-known that the order $|\Gamma|$ of $\Gamma$ is given by $|\Gamma|  = 1+\sum_{i=1}^s \rank \alpha_i^2$.
Therefore,
\begin{equation*}
\frac{1+\sum_{i=1}^s n_i^2}{1+\sum_{i=1}^s n_i} =\frac{|\Gamma|}{1+\sum_{i=1}^s \rank \alpha_i}.
\end{equation*}
By Remark~\ref{rk:Xi.xi}, in this case one has $\Xi_{\alpha_{\mathrm{Nat}}}(X,\Rs)=\left(\frac{1+\sum n_i^2}{ 1+\sum n_i}\right)\tilde{\xi}_{\alpha_{\mathrm{Nat}}}(D)$,
and by Table~\ref{tb:red.xi} $\tilde{\xi}_{\alpha_{\mathrm{Nat}}}(D)=\frac{1}{|\Gamma|}$, for $\Gamma=\mathsf{BD}_{2r},\mathsf{BT},\mathsf{BO},\mathsf{BI}$. Therefore, in these cases one has
\begin{equation*}
\Xi_{\alpha_{\mathrm{Nat}}}(X,\Rs)=\frac{1}{1+\sum_{i=1}^s \rank \alpha_i}.
\end{equation*}

\end{remark}

\subsection{The invariant \texorpdfstring{$\tilde{\xi}_{\rho}(D)$}{xip(D)} via the McKay correspondence}
In this final section we show how to compute the invariant $\tilde{\xi}_{\rho}(D)$ using the McKay correspondence. The advantage of this approach is that it is more readily computable.

Let $(X,x)$ be the germ of a normal surface singularity. Denote by $L$ the link. Suppose that $L$ is a rational homology sphere, e.g., the link of any rational singularity (see Remark~\ref{remark:linkrhs}). Let $\pi \colon \Rs \to X$ be a resolution such that $E$ is a normal crossing divisor. Let $(L,\sigma_\text{can})$ be the link equipped with the canonical spin$^c$ structure $\sigma_\text{can}$ induced by the resolution (see~\cite[\S~2]{zbMATH01987653}). Moreover, the almost complex structure on $\Rs$ gives a spin$^c$ structure $\sigma_{\Rs}$ on $\Rs$ such that its restriction to $L$ is $\sigma_\text{can}$. From now on, we will use this spin$^c$ structure.

\begin{theorem}
\label{th:C2Res}
Let $(X,x)$ be the germ of a normal surface singularity. Suppose that the link $L$ is a rational homology sphere. Let $\pi \colon \Rs \to X$ be a resolution with $E$ a normal crossing divisor. Let $\rho \colon \pi_1(L) \to GL(n,\CC)$ be a representation. Denote by $M_\rho$ the reflexive $\Ss{X}$-module given by Riemman-Hilbert correspondence and $\rho$. Denote by $\mathscr{M}_\rho$ the corresponding vector bundle. Then,
\begin{equation}\label{eq:xi.integral}
\tilde{\xi}_{\rho}(D)= \int_{\Rs} \left( \mathrm{ch} \, \mathscr{M}_\rho -n \right) \mathcal{T}(\Rs),
\end{equation}
where $\mathrm{ch}$ is the Chern character and $\mathcal{T}$ the Todd class. Moreover, if $\rho$ is topologically trivial, then $\widehat{c}_{\rho,2}([L])$ is given by \eqref{eq:xi.integral}.
\end{theorem}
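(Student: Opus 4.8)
The plan is to deduce \eqref{eq:xi.integral} from the Atiyah--Patodi--Singer index theorem for manifolds with boundary \cite[pp.~415]{zbMATH03491931}, applied on a suitable representative of the resolution $\Rs$ (which is a compact oriented $4$-manifold with $\partial\Rs\cong L$) to the spin$^c$ Dirac operator of $(\Rs,\sigma_{\Rs})$ twisted by $\mathscr{M}_\rho$, and then comparing it with the same formula for the untwisted operator with multiplicity $n$. First I would fix on $\Rs$ a Riemannian metric that is a product near $L$, and endow $\mathscr{M}_\rho=(\pi^\ast M_\rho)^{\smvee\smvee}$ with a Hermitian metric and connection that near $L$ coincide with the flat structure of $V_\rho$; this is legitimate because $\pi$ is biholomorphic over $X\setminus\{x\}$ and $L\subset\Rs\setminus E$, so the full sheaf $\mathscr{M}_\rho$ restricts over $\Rs\setminus E$ to $M_\rho$, hence restricts on $L$ to the flat bundle $V_\rho$ attached to $\rho$ by the Riemann--Hilbert correspondence of \cite{GUSTAVSEN2008851}. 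Let $\mathcal{D}^+_{\mathscr{M}_\rho}$ denote the chiral spin$^c$ Dirac operator of $(\Rs,\sigma_{\Rs})$ twisted by $\mathscr{M}_\rho$; since $\sigma_{\Rs}$ comes from the almost complex structure, its index density is $\mathrm{ch}(\mathscr{M}_\rho)\,\mathcal{T}(\Rs)$, and its boundary operator on $L$ is $D_\rho$, the Dirac operator $D$ of $(L,\sigma_\text{can})$ coupled to $V_\rho$ (and, for trivial coefficients $\CC^n$, it is $D\otimes\CC^n$).

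The APS index theorem then yields, on the one hand,
\begin{equation*}
\mathrm{index}\,\mathcal{D}^+_{\mathscr{M}_\rho}=\int_{\Rs}\mathrm{ch}(\mathscr{M}_\rho)\,\mathcal{T}(\Rs)-\xi(D_\rho),
\end{equation*}
where $\xi(D_\rho)=\tfrac12\bigl(h_\rho+\eta(D_\rho)\bigr)$ is the boundary correction; and, on the other hand, for the trivial flat bundle with coefficients $\CC^n$,
\begin{equation*}
n\cdot\mathrm{index}\,\mathcal{D}^+=\int_{\Rs}n\,\mathcal{T}(\Rs)-n\,\xi(D).
\end{equation*}
Subtracting, and using that the left-hand side is an integer, I obtain
\begin{equation*}
\xi(D_\rho)-n\,\xi(D)=\int_{\Rs}\bigl(\mathrm{ch}(\mathscr{M}_\rho)-n\bigr)\mathcal{T}(\Rs)\pmod{\ZZ}.
\end{equation*}
By definition \eqref{eq:reduced} the left-hand side is $\widetilde{\xi}_\rho(D)\in\CC/\ZZ$, which is \eqref{eq:xi.integral}. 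The final assertion is then immediate: when $\rho$ is topologically trivial, Theorem~\ref{th:C2.APS} (equivalently Corollary~\ref{cor:c2.xi}) gives $\widehat{c}_{\rho,2}([L])=\widetilde{\xi}_\rho(D)$, hence $\widehat{c}_{\rho,2}([L])$ is also given by \eqref{eq:xi.integral}.

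I expect the main obstacle to be making precise that the boundary data are exactly what is claimed: that forming the reflexive hull of $\pi^\ast M_\rho$ changes nothing over $\Rs\setminus E\cong X\setminus\{x\}$, so that $\mathscr{M}_\rho|_L\cong V_\rho$ as bundles \emph{with} connection and the boundary operator of $\mathcal{D}^+_{\mathscr{M}_\rho}$ is genuinely $D_\rho$ (including orientation and normalization conventions, so that the $D$ appearing here is the one used in \eqref{eq:xi.integral}). A second point that must be handled, rather than an obstacle once the cited result is granted, is that for non-unitary $\rho$ the operator $D_\rho$ is not self-adjoint but only has a self-adjoint symbol; one must therefore use the version of the APS index theorem valid in that setting (the same one underlying the $\xi$-invariant recalled after \eqref{eq:reduced} and the Index Theorem for flat bundles), with $h_\rho$ the dimension arising from the spectrum on the imaginary axis and $\widetilde{\xi}_\rho(D)\in\CC/\ZZ$ --- which is precisely what \cite[pp.~415]{zbMATH03491931} provides. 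Everything else --- the identification of the spin$^c$ index density on the almost complex $\Rs$ with $\mathrm{ch}(\mathscr{M}_\rho)\,\mathcal{T}(\Rs)$, and the degree count showing that $\bigl(\mathrm{ch}(\mathscr{M}_\rho)-n\bigr)\mathcal{T}(\Rs)$ contributes only in top degree --- is routine.
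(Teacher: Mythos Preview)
Your proposal is correct and follows exactly the same approach as the paper, which simply records that the result follows from \cite[pp.~415]{zbMATH03491931} together with Corollary~\ref{cor:c2.xi}. Your write-up merely unpacks what that citation entails: the APS index theorem on $\Rs$ with boundary $L$, applied to the spin$^c$ Dirac operator twisted by $\mathscr{M}_\rho$ and compared to the untwisted case, with the identification $\mathscr{M}_\rho|_L\cong V_\rho$ and the final step via Corollary~\ref{cor:c2.xi}.
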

\begin{proof}
The proof follows by \cite[pp.~415]{zbMATH03491931} and Corollary~\ref{cor:c2.xi}.
\end{proof}
\begin{corollary}
Let $(X,x)$ be the germ of a quotient surface singularity. Let $\pi \colon \Rs \to X$ be the minimal resolution. Denote by $E=\bigcup E_j$ the exceptional divisor. Then, there exists a well-defined natural map
\begin{equation*}
    \Psi \colon \{E_1, \dots, E_n\} \to \QQ/\ZZ.
\end{equation*}
\end{corollary}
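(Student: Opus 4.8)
The plan is to build $\Psi$ out of the McKay correspondence and the second CCS-number. First, recall that since $(X,x)\cong(\CC^2/\Gamma,0)$ with $\Gamma$ a small finite subgroup of $\GLn{2}$, the germ is a rational normal surface singularity, so by Remark~\ref{remark:linkrhs} its link $L$ is a rational homology $3$-sphere, and by Proposition~\ref{prop:FundGroupLink} the local fundamental group $\pi_1^{\mathrm{loc}}(X,x)=\pi_1(L)$ is isomorphic to $\Gamma$, hence finite. Let $\pi\colon\Rs\to X$ be the minimal resolution, with exceptional divisor $E=\bigcup_{j=1}^{n}E_j$. By the McKay correspondence in Wunram's form~\cite{zbMATH03997956} (and Artin--Verdier~\cite{Artin-Verdier:RMORDP} in the rational double point case), each $E_j$ is matched with a unique non-trivial special indecomposable reflexive $\Ss{X}$-module $M_j$; since equivariant reflexive modules over the regular ring $\Ss{\CC^2}$ are free, the Riemann--Hilbert correspondence identifies $M_j$ with a unique non-trivial special irreducible representation $\alpha_j$ of $\pi_1^{\mathrm{loc}}(X,x)$ (see~\cite{GUSTAVSEN2008851}). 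I would then set
\[
\Psi(E_j):=\widehat{c}_{\alpha_j,2}([L]),
\]
the second CCS-number of $\alpha_j$ with respect to the fundamental class of $L$; equivalently, by Theorem~\ref{Cor:NoTop} and Theorem~\ref{th:C2Res}, this equals $\int_{\Rs}(\mathrm{ch}\,\mathscr{M}_{\alpha_j}-\rank(\alpha_j))\,\mathcal{T}(\Rs)-\tilde\xi_{\det(\alpha_j)}(D)\bmod\ZZ$, where $\mathscr{M}_{\alpha_j}$ is the vector bundle of the full sheaf associated to $M_j$.

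The first thing to check is that $\Psi$ lands in $\QQ/\ZZ$ rather than merely in $\CC/\ZZ$. By~\eqref{eq:ccs.BG.varsigma} we have $\widehat{c}_{\alpha_j,2}([L])=\widehat{c}_2(\alpha_j)(\varsigma)$ with $\varsigma=\phi_*([L])\in H_3(\BG[\Gamma];\ZZ)$, and under~\eqref{eq:Iso.Hom.Chern} the class $\widehat{c}_2(\alpha_j)$ is a homomorphism $H_3(\BG[\Gamma];\ZZ)\to\CC/\ZZ$. Since $\Gamma$ is finite, $H_3(\BG[\Gamma];\ZZ)$ is a finite abelian group, so $\varsigma$ has some finite order $m$, and a homomorphism to $\CC/\ZZ$ carries an element of order $m$ into $\tfrac1m\ZZ/\ZZ\subset\QQ/\ZZ$. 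Hence $\Psi(E_j)\in\QQ/\ZZ$, consistently with the values displayed in Table~\ref{tb:scn}.

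It then remains to argue that $\Psi$ is well defined and natural. For well-definedness I would note that the minimal resolution is unique up to a unique isomorphism over $X$, so the unordered set $\{E_1,\dots,E_n\}$ together with the McKay assignment $E_j\mapsto\alpha_j$ depends only on the analytic isomorphism type of $(X,x)$, while $\widehat{c}_{\rho,2}([L])$ is intrinsic to the representation $\rho$ and to $L$ with its link orientation (Remark~\ref{rem:G.M.ccs}); no choices enter. For naturality I would take an isomorphism of germs $(X,x)\xrightarrow{\ \cong\ }(X',x')$, lift it to an isomorphism of minimal resolutions carrying each $E_j$ to some $E'_{j'}$, and observe that the induced isomorphism $\pi_1^{\mathrm{loc}}(X,x)\cong\pi_1^{\mathrm{loc}}(X',x')$ together with the induced orientation-preserving diffeomorphism $L\cong L'$ are compatible with the McKay correspondences and with the assignment $\rho\mapsto\mathscr{M}_\rho$, so that $\Psi$ and $\Psi'$ agree under these identifications.

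The genuinely substantive input---that $\widehat{c}_{\rho,2}([L])$ is the twisted $\tilde\xi$-invariant of the Dirac operator and can be read off a resolution---is already supplied by Theorems~\ref{th:C2.APS},~\ref{Cor:NoTop} and~\ref{th:C2Res}, so what remains is essentially bookkeeping. The step I expect to demand the most care is pinning down the meaning of ``natural'': specifying the category of normal surface germs in which $\Psi$ is functorial, and verifying that both the Wunram correspondence \emph{for the minimal resolution} (so that the special irreducible representation attached to $E_j$ is a canonical datum, free of auxiliary choices) and the CCS-number assignment behave functorially there. The rationality argument, although short, genuinely relies on the finiteness of $\Gamma$.
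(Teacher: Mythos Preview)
Your approach is essentially the same as the paper's: associate to each $E_j$ the non-trivial special irreducible representation $\alpha_j$ of $\pi_1^{\mathrm{loc}}(X,x)$ via Wunram's McKay correspondence~\cite{zbMATH03997956}, then send it to a spectral invariant of that representation. The paper's two-line proof invokes Theorem~\ref{th:C2Res} directly, so the map it has in mind is $E_j\mapsto\tilde\xi_{\alpha_j}(D)$ (equivalently the integral $\int_{\Rs}(\mathrm{ch}\,\mathscr{M}_{\alpha_j}-\rank\alpha_j)\mathcal{T}(\Rs)$), whereas you take $E_j\mapsto\widehat{c}_{\alpha_j,2}([L])$; these differ by $\tilde\xi_{\det(\alpha_j)}(D)$ when $\alpha_j$ is not topologically trivial, but both are perfectly valid constructions of a natural map into $\QQ/\ZZ$, and the statement only asserts existence.

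Your write-up is in fact more complete than the paper's: the paper does not spell out why the target is $\QQ/\ZZ$ rather than $\CC/\ZZ$, nor does it address well-definedness or naturality. Your rationality argument via the finiteness of $H_3(\BG[\Gamma];\ZZ)$ is clean and correct, and applies equally well to $\tilde\xi_{\alpha_j}(D)$ (using Theorem~\ref{Cor:NoTop} to write it as a sum of second CCS-numbers, or directly since the Atiyah--Patodi--Singer index for a representation of a finite group lands in $\QQ/\ZZ$). Your caveat about pinning down the precise meaning of ``natural'' is well taken; the paper leaves this implicit.
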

\begin{proof}
By~\cite{zbMATH03997956}, there exists a one-to-one correspondence between the irreducible components of the exceptional divisor and the non-trivial special representations of the local fundamental group. Thus, the map is given by Theorem~\ref{th:C2Res} (recall that any quotient singularity is a rational singularity).
\end{proof}

We can obtain a similar result if $(X,x)$ has a smoothing with Milnor fiber $F$.  In the case of a hypersurface one can take $F$ to be the usual Milnor fiber.
Then, $F$ has a spin$^c$ structure whose restriction to the link coincides with $\sigma_\text{can}$ (see~\cite[\S~2]{zbMATH01987653}). Thus,
\begin{theorem}
\label{th:C2Res1}
Let $(X,x)$ be the germ of a normal surface singularity. Suppose that the link $L$ is a rational homology sphere. Let $\pi \colon \Rs \to X$ be a resolution with $E$ a normal crossing divisor. Suppose that $(X,x)$ has a smoothing with Milnor fiber $F$. Let $L$ be the link of $(X,x)$. Let $\rho \colon \pi_1(L) \to GL(n,\CC)$ be a representation and $V_\rho$ its associated flat vector bundle over $L$.
Suppose that there exist an extension $\mathscr{V}_\rho$ of $V_\rho$ over $F$. Then,
\begin{equation}\label{eq:xi.integral.smoothing}
 \tilde{\xi}_{\rho}(D) = \int_{F} \left( ch(\mathscr{V}_\rho) -n \right) \mathcal{T}(F).
\end{equation}
Moreover, if $\rho$ is topologically trivial, then $\widehat{c}_{\rho,2}([L])$ is given by \eqref{eq:xi.integral.smoothing}.
\end{theorem}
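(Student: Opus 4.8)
The plan is to run the argument of Theorem~\ref{th:C2Res} with the Milnor fibre $F$ playing the role of the resolution $\Rs$; the resolution $\pi\colon\Rs\to X$ (and the normal crossing hypothesis on $E$) is really only needed to pin down the spin$^c$ structure $\sigma_\text{can}$ on $L$, the actual computation taking place entirely on $F$.

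First I would set up the geometry. After shrinking the representatives, $F$ is a compact oriented $4$-manifold with $\partial F=L$; it has a natural Stein (in particular complex) structure and hence a canonical spin$^c$ structure $\sigma_F$, and by the discussion preceding the theorem (see \cite[\S~2]{zbMATH01987653}) the restriction $\sigma_F|_L$ equals $\sigma_\text{can}$. Let $D_F$ be the spin$^c$ Dirac operator of $(F,\sigma_F)$; its boundary operator is the Dirac operator $D$ of $(L,\sigma_\text{can})$ occurring in Corollary~\ref{cor:c2.xi}. I would then fix a connection on the extension $\mathscr{V}_\rho$ which near $\partial F$ corresponds, under the isomorphism $\mathscr{V}_\rho|_L\cong V_\rho$, to the flat connection $\nabla_\rho$, so that the operator induced on $L$ by $D_F$ coupled to $\mathscr{V}_\rho$ is exactly the operator $D$ coupled to the flat bundle $V_\rho$, whose spectral invariants define $\tilde\xi_\rho(D)$ via \eqref{eq:reduced}.

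Next I would apply the Atiyah--Patodi--Singer index theorem for manifolds with boundary \cite[pp.~415]{zbMATH03491931} to $D_F$ coupled, respectively, to $\mathscr{V}_\rho$ and to the trivial rank-$n$ bundle $F\times\CC^n$ (with trivial connection), and subtract $n$ times the second identity from the first. Using $ch(F\times\CC^n)=n$ and the definition \eqref{eq:reduced} of $\tilde\xi_\rho$, the interior contributions combine to $\int_F(ch(\mathscr{V}_\rho)-n)\,\mathcal{T}(F)$ and the $\eta$/$h$ boundary contributions combine to $\tilde\xi_\rho(D)$, yielding
\begin{equation*}
 \ind\bigl(D_F\otimes\mathscr{V}_\rho\bigr)-n\,\ind(D_F)=\int_F\bigl(ch(\mathscr{V}_\rho)-n\bigr)\mathcal{T}(F)-\tilde\xi_\rho(D).
\end{equation*}
The left-hand side is an integer, so reducing modulo $\ZZ$ gives \eqref{eq:xi.integral.smoothing}; in particular this shows \emph{a posteriori} that the class of the right-hand side in $\CC/\ZZ$ does not depend on the chosen extension $\mathscr{V}_\rho$ nor on its connection. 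For the last assertion, if $\rho$ is topologically trivial then Corollary~\ref{cor:c2.xi} gives $\widehat{c}_{\rho,2}([L])=\tilde\xi_\rho(D)$, so the same integral computes $\widehat{c}_{\rho,2}([L])$.

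The step I expect to be the main obstacle is the boundary identification in the second paragraph: checking that the operator induced on $L$ by the (twisted) spin$^c$ Dirac operator of $(F,\sigma_F)$ agrees, as an elliptic operator with self-adjoint symbol, with the operator $D$ used in Corollary~\ref{cor:c2.xi}—this requires that the two spin$^c$ structures match on $L$ (which is where $\sigma_\text{can}$ and the reference \cite{zbMATH01987653} are essential) and that the connection on $\mathscr{V}_\rho$ can be arranged to be flat near $\partial F$. Once this is granted, the remainder is the formal subtraction of two APS index formulas, exactly as in the proof of Theorem~\ref{th:C2Res}, with the mod-$\ZZ$ ambiguity of the right-hand side absorbed automatically by the integrality of the index difference on the left.
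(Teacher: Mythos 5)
Your proposal is correct and is essentially a detailed expansion of the paper's own (one-line) proof, which simply invokes the Atiyah--Patodi--Singer result on p.~415 of \cite{zbMATH03491931} together with Corollary~\ref{cor:c2.xi}. The substance of that citation is exactly the subtraction of the two APS index formulas on $F$ that you carry out, with the spin$^c$/connection matching on the boundary and the integrality of the index difference handling the reduction mod $\ZZ$.
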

\begin{proof}
The proof follows by \cite[pp.~415]{zbMATH03491931} and Corollary~\ref{cor:c2.xi}.
\end{proof}

\subsection*{Acknowledgments}

This research was supported by the project UNAM-DGAPA-PAPIIT-IN105121 and CONACYT 282937. We thank Prof. Javier F. de Bobadilla, Prof. András Nemethi and Prof. Duco van Straten for 
their interest in this problem and helpful comments. The third author is funded by OTKA 126683 and Lend\"ulet 30001. The third author thanks CIRM, Luminy, for its hospitality and for providing a perfect work environment. He also thanks Prof. Javier F. de Bobadilla, the 2021 semester 2 Jean-Morlet Chair, for the invitation.


\end{document}